\newtheorem{Th}{Theorem}
\newtheorem{Lem}{Lemma}
\begin{document}
\thispagestyle{empty}

\title[Long-term behaviour of asymptotically autonomous Hamiltonian systems]{Long-term behaviour of asymptotically autonomous Hamiltonian systems with multiplicative noise}
\author[O.A. Sultanov]{Oskar A. Sultanov}

\address{
Chebyshev Laboratory, St. Petersburg State University, 14th Line V.O., 29, Saint Petersburg 199178 Russia;\\
Institute of Mathematics, Ufa Federal Research Centre, Russian Academy of Sciences, Chernyshevsky street, 112, Ufa 450008 Russia.}
\email{oasultanov@gmail.com}


\maketitle
{\small

{\small
\begin{quote}
\noindent{\bf Abstract.} 
The influence of multiplicative stochastic perturbations on the class of asymptotically Hamiltonian systems on the plane is investigated. It is assumed that disturbances do not preserve the equilibrium of the corresponding limiting system and their intensity decays in time with power-law asymptotics. The paper discusses the long-term asymptotic behaviour of solutions and its dependence on the structure and parameters of perturbations. In particular, it is shown that perturbed trajectories can tend to the equilibrium of the limiting system or new stochastically stable states can arise. The proposed analysis is based on a combination of the averaging method and the construction of stochastic Lyapunov functions. The results obtained are applied to the problem of capture into parametric autoresonance in the presence of noise.
\medskip

\noindent{\bf Keywords: }{Asymptotically autonomous system, stochastic perturbation, bifurcation, stability,
stochastic Lyapunov function, autoresonance}

\medskip
\noindent{\bf Mathematics Subject Classification: }{34F10, 93E15, 37J65}

\end{quote}
}

\section{Introduction}

In this paper, the influence of stochastic perturbations on a class of autonomous systems in the plane is investigated. It is assumed that the unperturbed system is Hamiltonian with a neutrally stable equilibrium, which is not preserved in the perturbed system. It is known that in this case even small stochastic perturbations can lead to the exit of trajectories from any bounded domain~\cite{FW98}. We assume that the intensity of perturbations decays in time so that the perturbed system is asymptotically autonomous, and we study the long-term behaviour of perturbed trajectories near the equilibrium of the limiting system. 

Dynamical systems with perturbations decaying in time have been studied in many papers. See, for example~\cite{LDP74}, where a class of perturbations is described that do not violate the qualitative behaviour of solutions of oscillatory Hamiltonian systems. In the general case, the long-term behaviour of solutions of asymptotically autonomous systems and the corresponding limiting systems can differ significantly~\cite{HRT94,OS20arxiv}. Bifurcations in deterministic asymptotically autonomous systems were discussed in~\cite{LRS02,KS05,MR08,OS21DCDS,OSIJBC21}. 

The effect of white noise disturbances on qualitative properties of solutions was studied in~\cite{RKh64,BG02,AMR08,BKGT08,DNR11,BHW12,TW15} for systems of stochastic differential equations with time-independent coefficients. Bifurcation phenomena in such systems, associated with changes in the profile of stationary probability densities, in the Lyapunov spectrum function or in the dichotomy spectrum, were investigated in~\cite{NSN90,CF98,BRS09,BRR15,CDLR17,DELR18}. The influence of time-dependent stochastic perturbations on the long-term behaviour of solutions in scalar systems was discussed in~\cite{AGR09,ACR11,KT13}. Decaying stochastic perturbations that preserve the equilibrium of the limiting Hamiltonian system were considered in~\cite{OS21arxiv}, where bifurcations associated with a change in the stability of the equilibrium were discussed. To the best of the author's knowledge, bifurcations in asymptotically autonomous systems with stochastic perturbations that do not preserve the equilibrium of the limiting system have not been studied in detail. In this paper, we describe possible stable asymptotic regimes in the perturbed system and their dependence on the structure and parameters of disturbances. 

The paper is organized as follows. In Section~\ref{PS}, the statement of the mathematical problem is given and the
class of damped perturbations is described. The main results are presented in Section~\ref{MainRes}. The proofs are
contained in the subsequent sections. In particular, in Section~\ref{Sec3} we construct changes of variables that simplify the system in the first asymptotic terms at infinity in time. These transformations are based on the transition to energy-angle variables related to the parameters of the general oscillating solution of the limiting Hamiltonian system, and a specific near-identity transformation of the energy variable. Under some reasonable assumptions on the the structure of the corresponding simplified equations, possible stable asymptotic regimes in the perturbed system are described. The justification of these results based on the construction of stochastic Lyapunov functions is contained in sections \ref{Sec4}, \ref{Sec5}, \ref{Sec6} and \ref{Sec7}. Section \ref{SecEx} discusses examples of nonlinear systems with damped stochastic perturbations. The application of the proposed theory to the autoresonance phenomenon is contained in Section~\ref{Appl}. The paper concludes with a brief discussion of the results obtained.

\section{Problem statement}\label{PS}
Consider the system of It\^{o} stochastic differential equations
\begin{equation}\label{FulSys}
	d {\bf z} = {\bf a}({\bf z},t) dt + {\bf A}({\bf z},t)\,d{\bf w}(t), \quad t>s>0, \quad {\bf z}(s)={\bf z}_0\in\mathbb R^2,
\end{equation}
where ${\bf z}=(x,y)^T$, ${\bf a}({\bf z},t)=(a_1(x,y,t),a_2(x,y,t))^T$ is a vector function, ${\bf A}({\bf z},t)=\{\alpha_{i,j}(x,y,t)\}_{2\times 2}$ is a $2\times2$ matrix, and ${\bf w}(t)=(w_1(t),w_2(t))^T$ is a two dimensional Wiener process on a probability space $(\Omega,\mathcal F,\mathbb P)$. The functions $a_i(x,y,t)$ and $\alpha_{i,j}(x,y,t)$, defined for all $(x,y,t)\in\mathbb R^2\times \mathbb R_+$, are infinitely differentiable and do not depend on $\omega\in\Omega$. It is assumed that
\begin{gather}\label{zero}
{\bf a}(0,t)\equiv 0, \quad {\bf A}(0,t)\not\equiv 0,
\end{gather}
and there exists $M>0$ such that ${\bf a}({\bf z},t)$ and ${\bf A}({\bf z},t)$ satisfy the Lipschitz and growth conditions:
\begin{gather}\label{lip}
\begin{split}
&|{\bf a}({\bf z}_1,t)-{\bf a}({\bf z}_2,t)|\leq M |{\bf z}_1-{\bf z}_2|, \\ 
&\|{\bf A}({\bf z}_1,t)-{\bf A}({\bf z}_2,t)\|\leq M |{\bf z}_1-{\bf z}_2|, \quad 
\|{\bf A}({\bf z},t)\|\leq M (1+ |{\bf z}|)
\end{split}
\end{gather}
for all ${\bf z}, {\bf z}_1, {\bf z}_2\in \mathbb R^2$ and $t\geq s$. Here, $|{\bf z}|=\sqrt{x^2+y^2}$ and $\|\cdot\|$ is the operator norm coordinated with the norm $|\cdot|$ of $\mathbb R^2$. Note that these restrictions on the coefficients of system ensure the existence and uniqueness of a continuous with probability one solution ${\bf z}(t)=(x(t),y(t))^T$ for all $t\geq s$ and for any initial point ${\bf z}_0\in\mathbb R^2$ (see, for example,~\cite[\S 5.2]{BOks98}).

In addition, it is assumed that system \eqref{FulSys} is asymptotically autonomous: for any domain $\mathcal D\subset\mathbb R^2$  
\begin{equation*}
	\lim_{t\to\infty} {\bf a}({\bf z},t)={\bf a}_0({\bf z}), \quad \lim_{t\to\infty} {\bf A}({\bf z},t)={\bf 0} 
\end{equation*}
for all ${\bf z}\in \mathcal D$. The limiting system
\begin{equation}\label{LimSys}
	\frac{d{\bf z}}{dt}={\bf a}_0({\bf z}), \quad {\bf a}_0({\bf z})\equiv \begin{pmatrix} \partial_y H_0(x,y) \\ -\partial_x H_0(x,y) \end{pmatrix}
\end{equation}
is assumed to be Hamiltonian with a fixed point at the origin $(0,0)$. Moreover, we assume that 
\begin{equation}\label{H0as}
	H_0(x,y)=\frac{|{\bf z}|^2}{2}+\mathcal O(|{\bf z}|^3), \quad |{\bf z}|\to 0,
\end{equation}
and there exist $E_0>0$ and $r>0$ such that for all $E\in (0,E_0]$ the level lines $\{(x,y)\in\mathbb R^2: H_0(x,y)=E\}$, lying in the ball $\mathcal B_r=\{(x,y)\in\mathbb R^2: |{\bf z}|\leq r\}$, are closed curves on the phase plane $(x,y)$. Note that each of these curves corresponds to a periodic solution ${\bf z}_\ast(t)\equiv (x_\ast(t,E), y_\ast(t,E))^T$ of system \eqref{LimSys} with the period $T(E)=2\pi/\nu(E)$, where $\nu(E)\neq 0$ for all $E\in (0,E_0]$ and $\nu(E)=1+\mathcal O(E)$ as $E\to 0$. In this case, the value $E=0$ corresponds to the equilibrium ${\bf z}(t)\equiv 0$. We also assume
that $\mathcal B_r$ does not contain any fixed points of system \eqref{LimSys}, except for the origin.

Damped perturbations of system \eqref{LimSys} are described by functions with power asymptotic expansions:
\begin{equation}\label{HFBas}
	\begin{split}
		  {\bf a}({\bf z},t)={\bf a}_0({\bf z})+\sum_{k=1}^\infty t^{-\frac{k}{q}} {\bf a}_k({\bf z}), \quad
		  {\bf A}({\bf z},t)=\sum_{k=1}^\infty t^{-\frac{k}{q}} {\bf A}_{k}({\bf z}),  \quad t\to\infty
	\end{split}
\end{equation}
for all ${\bf z}\in \mathcal B_r$ with some $q\in\mathbb Z_+$ and time-independent coefficients ${\bf a}_k({\bf z})$ and ${\bf A}_k({\bf z})=\{\alpha^k_{i,j}(x,y)\}_{2\times 2}$ such that 
\begin{gather}\label{Azero}
\exists\,p\in\mathbb Z_+: \quad  {\bf A}_k(0)=0, \quad k<p, \quad {\bf A}_{p}(0)\neq 0.
\end{gather}  

Note that decaying perturbations with power-law asymptotics appear in many problems associated with non-linear and non-autonomous systems~\cite{IKNF06,BG08,KF13,LK14,CYZZ18,OS21,CH21}.
The simplest example is given by the following linear system:
\begin{equation}\label{ex0}
\begin{split}
	 dx=ydt,\quad
	 dy=(-x+t^{-1}a y)\,dt+ t^{-\frac pq} c \,dw_2(t), \quad t>1,
\end{split}
\end{equation}
with $a,c={\hbox{\rm const}}$. This system is of the form \eqref{FulSys} with 
\begin{gather*}
{\bf a}({\bf z},t)\equiv {\bf a}_0({\bf z})+t^{-1}\begin{pmatrix}0\\ a y\end{pmatrix}, \quad
{\bf A}({\bf z},t)\equiv t^{-\frac{p}{q}} {\bf A}_p({\bf z}), \quad {\bf A}_p({\bf z})\equiv \begin{pmatrix} 0&0 \\0 &c \end{pmatrix}, \quad H_0(x,y)\equiv \frac{|{\bf z}|^2}{2}. 
\end{gather*}
It can easily be checked that the limiting system with $a=c=0$ has $2\pi$-periodic general solution $x_\ast(t+\phi;E)=\sqrt{2E}\cos (t+\phi)$, $y_\ast(t+\phi;E)=-\sqrt{2E}\sin (t+\phi)$, where $E$ and $\phi$ are arbitrary constants. In the absence of the stochastic term ($a\neq 0$ and $c=0$), the asymptotics of solutions can be constructed using the WKB approximations (see, for example,~\cite{WW66}): 
\begin{gather*}
	x(t)=t^{\frac{a}{2}}\left(x_\ast(t+\phi;E)+\mathcal O(t^{-1})\right),\quad 
	y(t)=t^{\frac{a}{2}}\left(y_\ast(t+\phi;E)+\mathcal O(t^{-1})\right),\quad 
	t\to\infty.
\end{gather*}
We see that in this case the stability of the equilibrium $(0,0)$ of the limiting system depends on the sign of the parameter $a$ (see Fig.~\ref{Fig12}, a). Numerical analysis of system \eqref{ex0} with $a\neq 0$ and $c\neq 0$ shows that the long-term behaviour of the solutions depends both on the values of the parameters $a$ and $c$ and on the degree of damping of the stochastic perturbation $p/q$ (see Fig.~\ref{Fig12}, b, c). 
\begin{figure}
\centering
\subfigure[$c=0$]{
\includegraphics[width=0.3\linewidth]{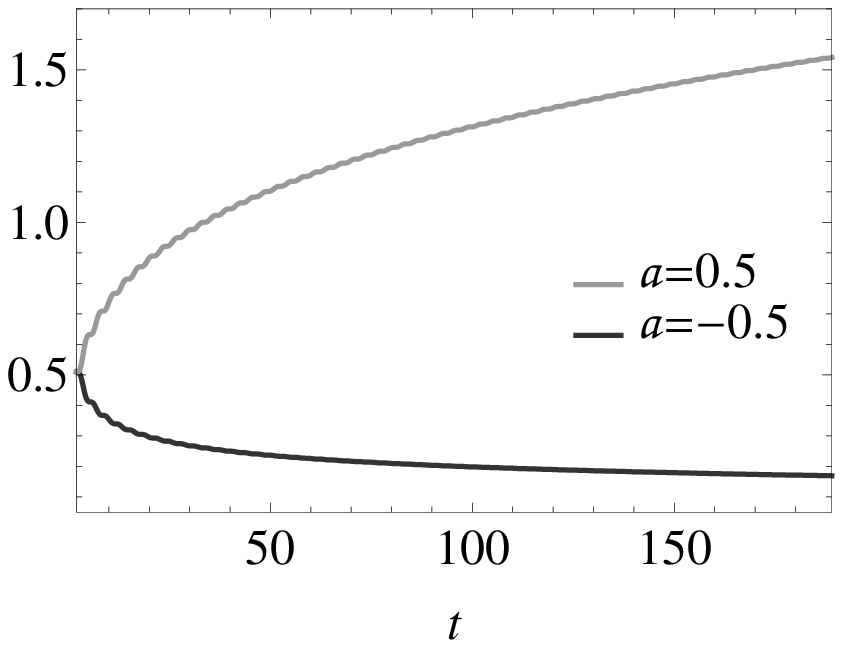}
}
\hspace{1ex}
 \subfigure[$c=1$, $p=q=2$]{
\includegraphics[width=0.3\linewidth]{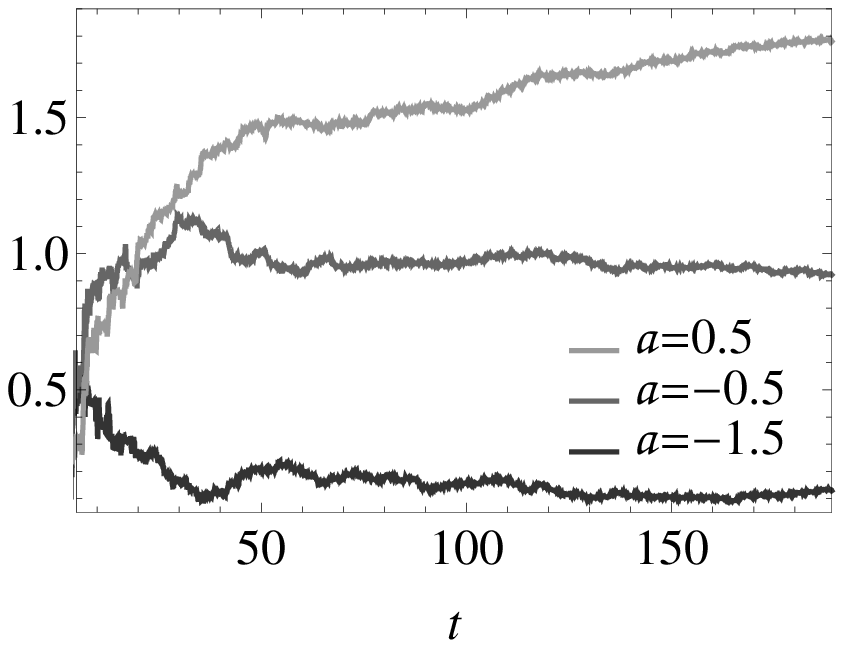}
}
\hspace{1ex}
\subfigure[$c=1$, $p=1$, $q=2$]{
\includegraphics[width=0.3\linewidth]{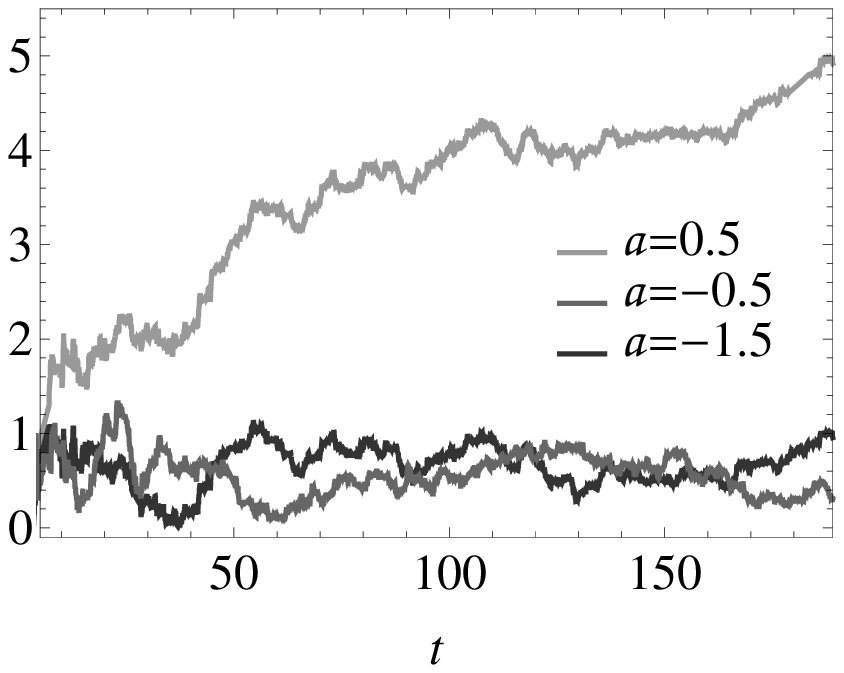}
}
\caption{\small The evolution of $|{\bf z}(t)|$ for sample paths of solutions to system \eqref{ex0} with different values of the parameters.} \label{Fig12}
\end{figure}

In the general case, nonlinear terms of asymptotically autonomous systems can significantly affect the long-term behaviour of solutions~\cite{OS20arxiv}. The goal of this paper is to reveal the role of damped and nonlinear terms in asymptotic regimes of solutions to system \eqref{FulSys} near the equilibrium of the limiting system.

\section{Main results}\label{MainRes}
Let ${\bf z}_\ast(t)\equiv (x_\ast(t,E),y_\ast(t,E))^T$ be a $T(E)$-periodic solution of the limiting system \eqref{LimSys} such that $H_0(x_\ast(t,E),y_\ast(t,E))\equiv E$, $x_\ast(0,E)>0$ and $y_\ast(0,E)= 0$ for all $E\in[0,E_0]$. For any $\sigma_0\in (0,E_0)$, define the domain
\begin{gather*}
\mathcal D(\sigma_0,E_0)=\{(x,y)\in\mathcal B_r: \sigma_0 \leq H_0(x,y)\leq E_0\}.
\end{gather*}
Define 
\begin{gather*}
\mu_{2p}:= \frac{1}{2}{\hbox{\rm tr}} \left({\bf A}_{p}^T(0){\bf A}_{p}(0)\right)= \frac{1}{2}\sum_{i,j=1}^2 \left(\alpha^{p}_{i,j}(0,0)\right)^2>0.
\end{gather*}
Then we have the following:
\begin{Th}\label{Th1}
Let system \eqref{FulSys} satisfy assumptions \eqref{zero}, \eqref{lip}, \eqref{H0as}, \eqref{HFBas},  and \eqref{Azero}. Then for all $\sigma_0\in (0,E_0)$ and $N\in [2p,4p]$ there exist $t_\ast\geq s$ and the chain of transformations $(x,y)\to (E,\varphi)\to (v,\varphi)$,
\begin{gather}
\label{exch1} x(t)= x_\ast\left(\frac{\varphi(t)}{\nu(E(t))},E(t)\right), \quad y(t)= y_\ast\left(\frac{\varphi(t)}{\nu(E(t))},E(t)\right), \\
\label{exch11} v(t)=V_N (E(t),\varphi(t),t), \quad V_N (E,\varphi,t)\equiv E+\sum_{k=1}^{N} t^{-\frac{k}{q}} v_k(E,\varphi)
\end{gather}
such that for all $(x,y)\in \mathcal D(\sigma_0,E_0)$ and $t\geq t_\ast$ system \eqref{FulSys} can be transformed into
\begin{gather}\label{Veq}
\begin{split}
dv=&\left(\Lambda(v,t)+F(v,\varphi,t)\right) \,dt+\sum_{j=1}^2 \tilde \beta_{1,j}(v,\varphi,t) \, dw_j(t),\\
{d\varphi} =&\left(\nu(v)+ G(v,\varphi,t)\right)dt+ \sum_{j=1}^2 \tilde \beta_{2,j}(v,\varphi,t) dw_j(t),
\end{split}
\end{gather}
where $v_k(E,\varphi)$, $F (v,\varphi,t)$, $G (v,\varphi,t)$ and $\tilde \beta_{i,j}(v,\varphi,t)$ are $2\pi$-periodic in $\varphi$, and
\begin{gather}
\nonumber 
	\Lambda(v,t)\equiv \sum_{k=1}^{N} t^{-\frac{k}{q}}\Lambda_k(v), \\ 
\nonumber
	\Lambda_k(E)=
\begin{cases} 
\mathcal O(E), & 1\leq k<2p\\
\mu_{2p}+\mathcal O(E), & k=2p\\
\mathcal O(1), & 2p< k\leq N
\end{cases}, \quad
 v_k(E,\varphi)=
\begin{cases} 
\mathcal O(E), & 1\leq k<2p\\
\mathcal O(1), & 2p\leq k\leq N
\end{cases}
\end{gather}
as $E\to 0$ uniformly for all $\varphi\in\mathbb R$. Moreover, there exist $\sigma_\ast>\sigma_0$ and $v_\ast\in (\sigma_\ast,E_0)$ such that 
\begin{gather*}
F(v,\varphi,t)=\mathcal O(t^{-\frac{N+1}{q}}), \quad G(v,\varphi,t)=\mathcal O(t^{-\frac{1}{q}}), \quad \tilde \beta_{i,j}(v,\varphi,t)=\mathcal O(t^{-\frac{1}{q}}) 
\end{gather*}
as $t\to\infty$ uniformly for all $v\in [\sigma_\ast,v_\ast]$ and $\varphi\in\mathbb R$.
\end{Th}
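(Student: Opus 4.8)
The plan is to construct the two changes of variables in \eqref{exch1}--\eqref{exch11} successively: first passing to energy--angle coordinates adapted to the oscillating solutions of \eqref{LimSys}, and then removing the angular dependence of the drift of the energy equation order by order in $t^{-1/q}$ by the near-identity transformation \eqref{exch11}. Throughout, the analysis is confined to the annular region $\mathcal D(\sigma_0,E_0)$ on which, by \eqref{H0as} and the hypotheses on the level lines, the map $(x,y)\mapsto(E,\varphi)$ with $E=H_0(x,y)$ and $\varphi$ the $2\pi$-normalized angle is a smooth diffeomorphism with nonvanishing Jacobian and $\nu(E)\neq0$; this is exactly why $\sigma_0>0$ and why the conclusions are stated for $v$ bounded away from the origin.

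First I would apply It\^o's formula to $E=H_0(x,y)$ and to the angle variable using \eqref{exch1}. Since $H_0$ is a first integral of \eqref{LimSys}, the limiting drift $\nabla H_0\cdot{\bf a}_0$ cancels identically, so the drift of $dE$ is generated solely by the damped terms in \eqref{HFBas} together with the It\^o correction $\tfrac12\sum_{i,j}({\bf A}{\bf A}^T)_{i,j}\,\partial^2_{z_iz_j}H_0$. Inserting \eqref{HFBas} yields a drift of the form $\sum_{k\geq1}t^{-k/q}\tilde\Lambda_k(E,\varphi)$ with $2\pi$-periodic coefficients, from which two features must be read off. The deterministic perturbations ${\bf a}_k$ enter through $\nabla H_0\cdot{\bf a}_k$; because ${\bf a}_k(0)=0$ by \eqref{zero} and $\nabla H_0(0)=0$ by \eqref{H0as}, each such term is $\mathcal O(|{\bf z}|^2)=\mathcal O(E)$, which forces $\tilde\Lambda_k=\mathcal O(E)$ for $1\leq k<2p$. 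The It\^o correction is governed by ${\bf A}{\bf A}^T=\sum_{m,n\geq p}t^{-(m+n)/q}{\bf A}_m{\bf A}_n^T$, whose leading order is $t^{-2p/q}$; since $\partial^2_{z_iz_j}H_0=\delta_{ij}+\mathcal O(|{\bf z}|)$ at the origin, its value at $E=0$ is $\tfrac12\,\mathrm{tr}({\bf A}_p(0){\bf A}_p^T(0))=\mu_{2p}$, which is precisely the source of the constant $\mu_{2p}$ in $\Lambda_{2p}$. The diffusion coefficients, coming from $\nabla H_0^T{\bf A}$ and \eqref{exch1}, are $\mathcal O(t^{-p/q})=\mathcal O(t^{-1/q})$, and the angular drift is $\nu(E)+\mathcal O(t^{-1/q})$.

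Next I would construct \eqref{exch11} and determine $v_k$ recursively. Applying It\^o's formula to $v=V_N(E,\varphi,t)$ and collecting the drift at each order $t^{-k/q}$ produces the homological equation $\nu(E)\,\partial_\varphi v_k=\Lambda_k(E)-\tilde\Lambda_k(E,\varphi)-R_k(E,\varphi)$, where $R_k$ is a known combination of the $v_j$ with $j<k$, the angular drift corrections $\tilde G_j$, the time derivatives $\partial_tV_N$ (which shift orders by $q$), and the second-order It\^o terms $\tfrac12\partial^2_{\varphi\varphi}V_N(d\varphi)^2$ and the like. Choosing $\Lambda_k(E)$ as the $\varphi$-average of $\tilde\Lambda_k+R_k$ over $[0,2\pi)$ makes the right-hand side mean-zero, and $\nu(E)\neq0$ then yields a unique $2\pi$-periodic $v_k$ by quadrature. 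Tracking orders gives $v_k=\mathcal O(E)$ and $\Lambda_k=\mathcal O(E)$ for $1\leq k<2p$, while at $k=2p$ the It\^o correction injects the $\mathcal O(1)$ constant, so $\Lambda_{2p}=\mu_{2p}+\mathcal O(E)$ and $v_{2p}=\mathcal O(1)$; for $2p<k\leq N$ the same recursion gives $\Lambda_k=\mathcal O(1)$ and $v_k=\mathcal O(1)$. After eliminating the angular dependence up to order $N$, all uncompensated terms are collected into $F(v,\varphi,t)=\mathcal O(t^{-(N+1)/q})$, while $G$ and $\tilde\beta_{i,j}$ retain their $\mathcal O(t^{-1/q})$ size.

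The main obstacle I anticipate is the bookkeeping of orders through the two It\^o expansions, specifically the interaction between the quadratic diffusion terms and the near-identity transformation. Because $(d\varphi)^2$ and $(dE)(d\varphi)$ are of order $t^{-2p/q}$, the second-order contributions $\partial^2V_N\cdot(\,\cdot\,)^2$ feed terms of order $t^{-(2p+j)/q}$, $j\geq1$, back into the recursion for $\Lambda_k$ and $v_k$; these must be incorporated into $R_k$ rather than discarded. Keeping $N\leq4p$ guarantees that in all such corrections only the leading diffusion coefficient ${\bf A}_p$ is needed and that no term of the required remainder order $t^{-(N+1)/q}$ is inadvertently promoted, so that $\mu_{2p}$ alone fixes the new $\mathcal O(1)$ structure; the lower bound $N\geq2p$ ensures that this decisive averaged term is actually captured. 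The remaining work is routine: verifying the uniform $\mathcal O(\cdot)$ estimates on the compact set $v\in[\sigma_\ast,v_\ast]$, $\varphi\in\mathbb R$, and fixing $t_\ast$ large enough that $V_N$ is invertible and the image of $\mathcal D(\sigma_0,E_0)$ stays inside the working strip.
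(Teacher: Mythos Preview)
Your proposal is correct and follows essentially the same route as the paper's proof in Section~\ref{Sec3}: first pass to energy--angle variables via the periodic solutions of the limiting system, then average out the angular dependence of the energy drift order by order using the near-identity map $V_N$, solving the homological equations $\nu(E)\,\partial_\varphi v_k=\Lambda_k-\tilde\Lambda_k-R_k$ with $\Lambda_k$ taken as the $\varphi$-average and fixing $t_\ast$ large so that $V_N$ is invertible on the annulus. One small inaccuracy to watch: you write ${\bf A}{\bf A}^T=\sum_{m,n\geq p}t^{-(m+n)/q}{\bf A}_m{\bf A}_n^T$, but assumption \eqref{Azero} only says ${\bf A}_k(0)=0$ for $k<p$, not ${\bf A}_k\equiv 0$, so the It\^o correction to $dE$ is present already at orders $k\geq 2$; nevertheless, for $k<2p$ at least one factor in each product ${\bf A}_i^T{\bf H}_{\bf z}(H_0){\bf A}_j$ vanishes at the origin, so these contributions are $\mathcal O(E^{1/2})$ and, after averaging in $\varphi$, still give $\Lambda_k=\mathcal O(E)$, exactly as the paper records.
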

The proof is contained in Section~\ref{Sec3}.

Note that the chain of transformations described in Theorem~\ref{Th1} can set some coefficients $\Lambda_k(v)$ to zero for $1\leq k<2p$. Let $n\leq 2p$ be the smallest number such that 
\begin{gather}\label{ass1}
\Lambda_k(v)\equiv 0, \quad k<n, \quad \Lambda_n(v)\not\equiv 0.
\end{gather}
Consider the following three cases, depending on the values of $(n,p,q)$:
\begin{eqnarray*}
\Sigma_1&=&\{(n,p,q)\in\mathbb Z^3_+: \quad n<2p, \quad n\leq q\},\\
\Sigma_2&=&\{(n,p,q)\in\mathbb Z^3_+: \quad n=2p\},\\
\Sigma_3&=& \{(n,p,q)\in\mathbb Z^3_+: \quad q<n\leq 2p\}.
\end{eqnarray*}

It follows from Theorem~\ref{Th1} that the behaviour of solutions to system \eqref{FulSys} near the equilibrium $(0,0)$ of the limiting system \eqref{LimSys} is determined by solutions of the transformed system \eqref{Veq} with $v(t)\in (0,E_0)$. It is clear that a long-term behaviour of $v(t)$, starting close to zero, depends on the structure of the right-hand side of the first equation in system \eqref{Veq}. Note that even for similar deterministic systems without noise, the key role can be played by both primary and secondary terms of the asymptotics as $t\to\infty$ (see, for example,~\cite{OS21DCDS}). With this in mind, we consider the following typical assumptions about the structure of these terms:
\begin{align}
\label{ass21}   & 
	\Lambda_n(v)= v\left(\lambda_{n} +\mathcal O(v)\right),\quad \Lambda_{k}(v)=\mathcal O(v), \quad k<2p;\\
\label{ass22}     &\exists\,m,d\in\mathbb Z_+: \quad m\geq 2,\quad
	\Lambda_k(v)=
	\begin{cases}
		 v^m\left(\lambda_{n,m}+\mathcal O(v)\right), &   k=n\\
		\mathcal O(v^m), & n<k<n+d \\ 
		 v\left(\lambda_{n+d}+\mathcal O(v)\right), & k=n+d\\
		\mathcal O(v), & n+d<k<2p
 \end{cases}
\end{align}
as $v\to 0$, where $\lambda_{n},\lambda_{n,m},\lambda_{n+d}={\hbox{\rm const}}\neq 0$. Assumption \eqref{ass21} corresponds to the case, when the primary asymptotic term of $\Lambda(v,t)$ has non-zero linear part. Assumption \eqref{ass22} corresponds to the cases when the leading terms of $\Lambda(v,t)$ are non-linear as $v\to 0$.


Define the functions
\begin{align*}
&\rho({\bf z},t;\xi,\vartheta)\equiv t^{\vartheta} H_0(x,y)-\xi, &\quad& Q_1(\zeta)\equiv \lambda_{n,m}\zeta^m + (\lambda_{n+d}+\delta_{n+d,q}\vartheta_1)\zeta, \\
&Q_2(\zeta)\equiv \lambda_{n,m}\zeta^m + \mu_{2p}, &\quad &
Q_3(\zeta)\equiv \lambda_{n,m}\zeta^m + (\lambda_{n+d}+\delta_{n+d,q}\vartheta_2)\zeta+\mu_{2p},
\end{align*}
the parameters
\begin{align*}
 \xi_0=&\frac{\mu_{2p}}{|\lambda_{n}+\delta_{n,q}\vartheta_0|}, \quad &\xi_1=&\left|\frac{\lambda_{n+d}+\delta_{n+d,q}\vartheta_1}{\lambda_{n,m}}\right|^{\frac{1}{m-1}},\quad &\xi_2=&\left(\frac{\mu_{2p}}{|\lambda_{n,m}|}\right)^{\frac{1}{m}},\\
 \vartheta_0=&\frac{2p-n}{q}>0, \quad &\vartheta_1=&\frac{d}{q(m-1)}>0, \quad &\vartheta_2=&\frac{2p-n}{qm}>0,
\end{align*}
and $\kappa_j=\min\left\{q^{-1},\vartheta_j\right\}>0$ for $j\in\{0,1,2\}$.
Consider a reduced equation in the form
\begin{gather}\label{ueq1}
		 \frac{du}{dt}=\Lambda(u,t),\quad t\geq t_\ast>0, \quad  \Lambda(u,t)\equiv \sum_{k=n}^{N}t^{-\frac{k}{q}}\Lambda_k(u)
\end{gather}
with some $N\in[2p,4p]$. Then, we have the following:
\begin{Lem}\label{Lem1}
Let $(n,p,q)\in\Sigma_1$ and assumption \eqref{ass21} hold with $\lambda_{n}+\delta_{n,q}\vartheta_0<0$. Then there exists a particular solution $u_0(t)=t^{-\vartheta_0} \zeta_0(t)$ of equation \eqref{ueq1} such that 
\begin{gather}\label{uastas}
\zeta_0(t)= \xi_0+ 
\begin{cases} 
\mathcal O(1), & n<q,\\
\mathcal O\big(t^{- (1-\epsilon)\varkappa_0 }\big), \quad \forall\,\epsilon\in (0,1), & n=q,
\end{cases}
\end{gather}
as $t\to\infty$, where
$ 
\varkappa_0=\min\{\kappa_0, |\lambda_n+\vartheta_0|\}$.
\end{Lem}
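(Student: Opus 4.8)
The plan is to seek the particular solution in the self-similar form dictated by the balance of the two competing contributions to $\Lambda(u,t)$: the linear term $t^{-n/q}\Lambda_n(u)\sim\lambda_n t^{-n/q}u$ and the additive term $t^{-2p/q}\Lambda_{2p}(u)\sim\mu_{2p}t^{-2p/q}$ supplied by Theorem~\ref{Th1}. Substituting $u=t^{-\vartheta_0}\zeta$ with $\vartheta_0=(2p-n)/q$ into \eqref{ueq1} and using $n/q+\vartheta_0=2p/q$, both contributions become of order $t^{-n/q}$, and the equation for $\zeta$ takes the form $\dot\zeta=t^{-n/q}\big[(\lambda_n+\vartheta_0 t^{(n-q)/q})\zeta+\mu_{2p}\big]+R(\zeta,t)$, the prefactor drift $\vartheta_0 t^{-1}\zeta$ having been absorbed into the bracket. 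Since $n\leq q$ forces $t^{(n-q)/q}\to\delta_{n,q}$ as $t\to\infty$, the effective constant leading coefficient is $\ell:=\lambda_n+\delta_{n,q}\vartheta_0$, negative by hypothesis, so the frozen-coefficient fixed point $\zeta=\xi_0=\mu_{2p}/|\ell|$ is well defined and positive. Using \eqref{ass21}, $\Lambda_{2p}(u)=\mu_{2p}+\mathcal O(u)$, the quadratic part of $\Lambda_n$, and the coefficients with $k>n$, one checks that $R(\zeta,t)=\mathcal O\big(t^{-(n+1)/q}(1+|\zeta|)+t^{-2p/q}|\zeta|^2\big)$ uniformly on bounded $\zeta$-sets, so the slowest remainder decays at the rate $\kappa_0=\min\{q^{-1},\vartheta_0\}$.

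Next I would linearise about $\xi_0$ by setting $\zeta=\xi_0+\eta$, obtaining $\dot\eta=t^{-n/q}\ell\,\eta+f(\eta,t)$ with a decaying forcing $f(\eta,t)=\mathcal O\big(t^{-(n+1)/q}(1+|\eta|)+t^{-2p/q}\eta^2\big)$. In the principal subcase $n=q$ I would pass to the logarithmic slow time $\tau=\ln t$, which turns this into $d\eta/d\tau=\ell\eta+h(\eta,\tau)$ with genuinely negative constant leading coefficient $\ell$ and perturbation $h(\eta,\tau)=\mathcal O\big(e^{-\kappa_0\tau}(1+|\eta|)+e^{-\vartheta_0\tau}\eta^2\big)$. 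Since $\ell<0$, the required solution is the bounded one represented by $\eta(\tau)=\int_{\tau_\ast}^{\tau}e^{\ell(\tau-\tau')}h(\eta(\tau'),\tau')\,d\tau'$, which I would realise as the unique fixed point of the associated operator in a ball of the weighted space $\{\eta\in C[\tau_\ast,\infty):\ \sup_{\tau\geq\tau_\ast}e^{(1-\epsilon)\varkappa_0\tau}|\eta(\tau)|<\infty\}$, where $\varkappa_0=\min\{\kappa_0,|\ell|\}$ and $\epsilon\in(0,1)$. The elementary bound $\int_{\tau_\ast}^{\tau}e^{\ell(\tau-\tau')}e^{-\kappa_0\tau'}\,d\tau'=\mathcal O\big(e^{-(1-\epsilon)\varkappa_0\tau}\big)$ both drives the contraction for $\tau_\ast$ large and explains the factor $1-\epsilon$: it absorbs the logarithm $\tau e^{-\kappa_0\tau}$ produced in the resonant case $|\ell|=\kappa_0$. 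Undoing the substitutions gives $\zeta_0(t)=\xi_0+\mathcal O\big(t^{-(1-\epsilon)\varkappa_0}\big)$, with $t_\ast$ fixed so large that $u_0(t)=t^{-\vartheta_0}\zeta_0(t)\in(0,E_0)$ and the expansion of $\Lambda$ used above remains legitimate for all $t\geq t_\ast$.

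For $n<q$ the prefactor term $\vartheta_0 t^{-1}\zeta$ is of strictly lower order than $t^{-n/q}$, the leading coefficient $\ell=\lambda_n$ is already negative, and the slow time $\tau\propto t^{(q-n)/q}$ makes the linear part exponentially contracting in $\tau$ while all perturbations, including $\vartheta_0 t^{-1}\zeta=\mathcal O(\tau^{-1})$, stay integrable. Here a forward-invariance argument suffices: I would show that for $t_\ast$ large the region $\{|\zeta-\xi_0|\leq C\}$ is positively invariant, since $\ell<0$ makes $\dot\zeta$ point inward on its boundary, which produces a bounded particular solution and hence $\zeta_0(t)=\xi_0+\mathcal O(1)$.

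The step I expect to be the main obstacle is the rigorous construction of a genuine solution on the entire half-line $[t_\ast,\infty)$ carrying the sharp decay, as opposed to a merely formal asymptotic series: one must control the accumulation of the quadratic and higher-order remainders uniformly in time within the contraction and handle the borderline resonance $|\ell|=\kappa_0$, where the naive estimate loses a logarithm and forces the $(1-\epsilon)$ loss in the exponent. Checking that $u_0$ stays small enough for the asymptotic representation of $\Lambda$ to be valid for every $t\geq t_\ast$ is a further consistency requirement that links the choice of $t_\ast$ to the radius of the trapping region.
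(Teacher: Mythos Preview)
Your argument is correct and starts from the same substitution $u=t^{-\vartheta_0}\zeta$ as the paper, arriving at the same reduced equation for $\xi=\zeta-\xi_0$ with leading linear coefficient $\ell=\lambda_n+\delta_{n,q}\vartheta_0<0$ and remainder of order $t^{-n/q-\kappa_0}$. Where you diverge is in how the particular solution is produced in the case $n=q$: the paper stays with the scalar differential inequality $\tfrac{d}{dt}|\xi|\le t^{-n/q}\big(-\gamma_0|\xi|+C_0 t^{-\kappa_0}\big)$, first uses it to exhibit a trapping interval $\{|\xi|\le\varepsilon\}$ (exactly your forward-invariance argument), and then integrates the same inequality with the integrating factor $e^{\gamma_0\tau(t)}$, $\tau'(t)=t^{-n/q}$, to read off the decay rate, the $(1-\epsilon)$ loss appearing from the logarithmic term when $\gamma_0=\kappa_0$. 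You instead pass to $\tau=\ln t$ and run a Banach fixed-point argument in the weighted space $e^{(1-\epsilon)\varkappa_0\tau}\eta\in L^\infty$, which builds existence and the sharp rate in one stroke and makes the role of the resonance $|\ell|=\kappa_0$ completely transparent via the convolution estimate you wrote down. The paper's route is more elementary (no functional-analytic machinery, just Gronwall-type bounds) and immediately shows that \emph{every} solution entering the trap has the stated asymptotics, which is convenient for the stability lemma that follows; your route is more constructive and would generalise more readily to higher-order asymptotic expansions. For $n<q$ the two arguments coincide.
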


The following theorem shows that the solution $u_0(t)$ of the reduced equation is stable in the full stochastic system. 
\begin{Th}\label{Th2}
Let system \eqref{FulSys} satisfy \eqref{zero}, \eqref{lip}, \eqref{H0as}, \eqref{HFBas}, \eqref{Azero} and $(n,p,q)\in\Sigma_1$ be integers such that assumptions \eqref{ass1} and \eqref{ass21} hold. 
 If $\lambda_{n}+\delta_{n,q}\vartheta_0<0$, then for all $\varepsilon_1>0$ and $\varepsilon_2>0$ there exist $\delta_0>0$ and $t_0>0$ such that the solution ${\bf z}(t)$ of system \eqref{FulSys}
with initial data ${\bf z}(t_0)={\bf z}_0$, $|\rho({\bf z}_0,t_0;\zeta_0(t_0),\vartheta_0)|< \delta_0$, satisfies
\begin{gather}\label{defineq2}
\mathbb P\left(\sup_{ t\geq t_0 }  |\rho({\bf z}(t),t;\zeta_0(t),\vartheta_0)|>\varepsilon_1\right)<\varepsilon_2.
\end{gather}
\end{Th}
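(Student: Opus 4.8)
The plan is to transfer everything to the transformed system \eqref{Veq} and then run a stochastic-Lyapunov-function argument for the scaled energy. By Theorem~\ref{Th1} the map ${\bf z}\mapsto(v,\varphi)$ is well defined on $\mathcal D(\sigma_0,E_0)$ for $t\ge t_\ast$, and since $V_N(E,\varphi,t)-E=\mathcal O(t^{-1/q})$ with $v_k=\mathcal O(E)$ for $k<2p$, the observable in \eqref{defineq2} satisfies $\rho({\bf z}(t),t;\zeta_0(t),\vartheta_0)=\big(t^{\vartheta_0}v(t)-\zeta_0(t)\big)+\mathcal O(t^{-1/q})$ as long as $v(t)$ stays of order $t^{-\vartheta_0}$. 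Writing $\rho(t):=t^{\vartheta_0}v(t)-\zeta_0(t)=t^{\vartheta_0}\big(v(t)-u_0(t)\big)$, with $u_0(t)=t^{-\vartheta_0}\zeta_0(t)$ the solution produced by Lemma~\ref{Lem1}, it suffices to control $\rho(t)$ and then absorb the $\mathcal O(t^{-1/q})$ discrepancy by taking $t_0$ large. The angle $\varphi$ enters \eqref{Veq} only through $2\pi$-periodic coefficients, so I would keep only the first (energy) equation and treat the $\varphi$-dependence of $F$ and $\tilde\beta_{1,j}$ as uniformly bounded parameters.

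Next I would write the It\^o equation for $\rho$. Since $u_0$ solves \eqref{ueq1}, the drift of $\rho$ equals $\vartheta_0 t^{-1}\rho+t^{\vartheta_0}\big(\Lambda(v,t)-\Lambda(u_0,t)\big)+t^{\vartheta_0}F$; expanding $\Lambda$ about $u_0$ and using \eqref{ass1}, \eqref{ass21} together with $v-u_0=t^{-\vartheta_0}\rho$, the linear-in-$\rho$ part of the drift is $(\lambda_n+\delta_{n,q}\vartheta_0)\,t^{-n/q}\rho$, up to terms quadratic in $\rho$ or of strictly higher order in $t^{-1/q}$, while the diffusion of $\rho$ is $t^{\vartheta_0}\tilde\beta_{1,j}$, whose size is dictated by the $\sqrt{v}$-degeneracy of the energy noise and by $\mu_{2p}$. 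The hypothesis $\lambda_n+\delta_{n,q}\vartheta_0<0$ makes this linear part restoring at rate $t^{-n/q}$, and the Kronecker correction $\delta_{n,q}\vartheta_0$ is precisely the contribution of $\vartheta_0 t^{-1}\rho$ in the borderline case $n=q$.

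The heart of the proof is a Lyapunov estimate. I would take $U(\rho)=\rho^2$ (or a smooth even majorant that is flat at the origin), compute $\mathcal L U=2\rho\,(\text{drift of }\rho)+(\text{diffusion of }\rho)^2$, and seek, for $t\ge t_0$ and $|\rho|\le\varepsilon_1$, a differential inequality $\mathcal L U\le -c\,t^{-n/q}U+\psi(t)$ in which the restoring coefficient $c\sim 2|\lambda_n+\delta_{n,q}\vartheta_0|$ dominates the combined contributions of the multiplicative diffusion, the quadratic drift and the remainder $F$. With the stopping time $\tau=\inf\{t\ge t_0:|\rho(t)|\ge\varepsilon_1\}$ one then obtains a supermartingale-type process up to $\tau$, and the supermartingale maximal inequality yields a bound of the form $\mathbb P\big(\sup_{t\ge t_0}|\rho|>\varepsilon_1\big)\le \varepsilon_1^{-2}\big(U(\rho(t_0))+R(t_0)\big)$, where $R(t_0)$ is the accumulated remainder. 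The aim is then to choose $|\rho(t_0)|<\delta_0$ small and $t_0$ large so that the right-hand side falls below $\varepsilon_2$; undoing the transformation of Theorem~\ref{Th1} and using paragraph~1 gives \eqref{defineq2}.

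The main obstacle is exactly this generator estimate in the centered variable, because the restoring drift $-c\,t^{-n/q}\rho$ and the multiplicative diffusion $(\text{diffusion of }\rho)^2$ are of the same order $t^{-n/q}$; the sign of $\mathcal L U$ therefore hinges on a sharp comparison of the constant $|\lambda_n+\delta_{n,q}\vartheta_0|$ against the $\mu_{2p}$-governed diffusion, carried out uniformly in $\varphi\in\mathbb R$ and along the entire hierarchy of decaying corrections furnished by \eqref{exch11}. Controlling the quadratic-in-$\rho$ terms and the higher-order-in-$t^{-1/q}$ remainders near $\rho=0$, where the diffusion does not itself vanish, so as to secure $R(t_0)\to 0$ as $t_0\to\infty$, is the delicate step, and it is where the structural hypotheses \eqref{ass1}, \eqref{ass21} and the sign condition $\lambda_n+\delta_{n,q}\vartheta_0<0$ are used in full.
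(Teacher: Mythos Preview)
Your overall structure---passing to \eqref{Veq}, centring on $u_0(t)$, and running a supermartingale/maximal inequality argument---is the same as the paper's. The gap is in the choice of Lyapunov function. With $U(\rho)=\rho^2$ you get
\[
\mathcal L U \;=\; -2|\lambda_n+\delta_{n,q}\vartheta_0|\,t^{-n/q}\rho^2 \;+\; \big|\text{diffusion of }\rho\big|^2 \;+\;(\text{lower order}),
\]
and the square of the diffusion is governed by $\mu_{2p}$: along $v\sim t^{-\vartheta_0}$ one has $t^{\vartheta_0}\tilde\beta_{1,j}=\mathcal O(t^{-n/(2q)})$, so the It\^o term is $\sim C\,t^{-n/q}$ with $C>0$ independent of $\rho$. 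This term does \emph{not} vanish as $\rho\to 0$, so near $\rho=0$ one always has $\mathcal L U>0$; and since $(n,p,q)\in\Sigma_1$ forces $n\le q$, the function $t^{-n/q}$ is not integrable on $[t_0,\infty)$, so the accumulated remainder $R(t_0)$ in your maximal-inequality bound is infinite. No ``sharp comparison of constants'' can repair this: the restoring term scales like $\rho^2 t^{-n/q}$ while the diffusion term scales like $t^{-n/q}$, so the sign of $\mathcal L U$ is wrong in a neighbourhood of $\rho=0$ regardless of how negative $\lambda_n+\delta_{n,q}\vartheta_0$ is. (This is exactly why the quadratic Lyapunov function, when it is used in the paper for the case $n=2p$ in Theorem~\ref{Th4}, yields only finite-horizon stability for $2p\le q$.)

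The paper avoids this by taking the Lyapunov function to be $\mathcal V_0=|\Theta_0|+(M_1+qM_2)t^{-1/q}$ with $\Theta_0(x,y,t)=t^{\vartheta_0}\big(V_N(I(x,y),\Phi(x,y),t)-u_0(t)\big)$ viewed as a function of $(x,y,t)$. Because $\Theta_0$ is affine in $V_N$, one has $L\Theta_0=\mathcal Z(\Theta_0,t)+\mathcal F(\Theta_0,\Phi,t)$, i.e.\ the second-order (Hessian) contribution of $L$ is already absorbed into the drift $\Lambda+F$ of the $v$-equation. Away from the zero set, $L|\Theta_0|={\hbox{\rm sgn}}(\Theta_0)\,L\Theta_0$, and no additional positive diffusion-squared term appears. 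With the specific choice $N=2p+q-n$ the residual $t^{\vartheta_0}F$ becomes $\mathcal O(t^{-1-1/q})$, which \emph{is} integrable, and the added $t^{-1/q}$ tail in $\mathcal V_0$ compensates both this remainder and the $\mathcal O(t^{-1/q})$ discrepancy between $|\Theta_0|$ and $|\rho({\bf z},t;\zeta_0(t),\vartheta_0)|$. Then $L\mathcal V_0\le 0$ on the relevant domain, $\mathcal V_0$ is a nonnegative supermartingale up to the exit time, and Doob's inequality gives \eqref{defineq2} once $\delta_0=\varepsilon_1\varepsilon_2/2$ and $t_0$ is taken large. To fix your argument you should switch from $\rho^2$ to $|\rho|$ (equivalently, to the paper's $|\Theta_0|$) and track carefully that the It\^o correction is already encoded in $LV_N=\Lambda+F$.
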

The proof is contained in Section~\ref{Sec4}.

Under assumption \eqref{ass22}, consider the following three cases:
\begin{align*}
\text{{\bf Case I}:}\quad &   m>\frac{2p-n}{2p-(n+d)}, \quad  \lambda_{n,m}<0, \quad \lambda_{n+d}+\delta_{n+d,q}\vartheta_1>0;\\
\text{{\bf Case II}:} \quad &  m<\frac{2p-n}{2p-(n+d)}, \quad  \lambda_{n,m}<0;\\
\text{{\bf Case III}:}\quad  &  m=\frac{2p-n}{2p-(n+d)}, \quad  \lambda_{n,m}<0, \\
 & \text{or}\\
& m=\frac{2p-n}{2p-(n+d)}, \quad  \lambda_{n,m}>0, \quad \lambda_{n+d}+\delta_{n+d,q}\vartheta_2<0, \quad \mu_{2p}<\lambda_{n,m} (m-1)K,
\end{align*}
where 
\begin{gather*}
K=\left|\frac{\lambda_{n+d}+\delta_{n+d,q}\vartheta_2}{m \lambda_{n,m}}\right|^{\frac{m}{m-1}}.
\end{gather*}
Then, we have the following:
\begin{Lem}\label{Lem3}
Let $(n+d,p,q)\in\Sigma_1$ and assumption \eqref{ass22} hold. Then,
\begin{itemize}
\item in {\bf Case I}, there exists a particular solution 
$u_1(t)=t^{-\vartheta_1} \zeta_1(t)$ of equation \eqref{ueq1} such that 
\begin{gather}\label{u1as}
\zeta_1(t)= \xi_1+\begin{cases} \mathcal O(1), & n+d<q,\\
\mathcal O\big(t^{- (1-\epsilon)\varkappa_1 }\big), \quad \forall\,\epsilon\in (0,1), & n+d=q,
\end{cases}
\end{gather}
as $t\to\infty$, where $\varkappa_1= \min\{\kappa_1, |Q_1'(\xi_1)|\}$;
\item in {\bf Case II}, there exists a particular solution $u_2(t)=t^{-\vartheta_2} \zeta_2(t)$ of equation \eqref{ueq1} such that  
\begin{gather}\label{u2as}
\zeta_2(t)=\xi_{2}+\mathcal O (1), \quad t\to\infty;
\end{gather}
\item in {\bf Case III}, there exist $\xi_3>0$ such that $Q_3(\xi_3)=0$, $Q_3'(\xi_3)<0$ and a particular solution $u_3(t)=t^{-\vartheta_2}\zeta_3(t)$ of equation \eqref{ueq1} such that  
\begin{gather}\label{u3as}
\zeta_3(t)=\xi_3+\begin{cases}\mathcal O(1), & n+d<q,\\
\mathcal O\big(t^{-  (1-\epsilon)\varkappa_3 }\big), \quad \forall\,\epsilon\in (0,1), & n+d=q,
\end{cases}
\end{gather}
as $t\to\infty$, where $\varkappa_3= \min\{\kappa_2, |Q_3'(\xi_3)|\}$.
\end{itemize}
\end{Lem}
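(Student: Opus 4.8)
The plan is to treat all three cases by one self‑similar substitution that balances the dominant terms of $\Lambda(u,t)$, reducing the scalar equation \eqref{ueq1} to an asymptotically autonomous equation whose limiting right‑hand side is the relevant polynomial $Q_j$. First I would set $u(t)=t^{-\vartheta_1}\zeta(t)$ in Case~I and $u(t)=t^{-\vartheta_2}\zeta(t)$ in Cases~II and~III, insert it into \eqref{ueq1}, and use the expansions \eqref{ass22}. The exponents $\vartheta_1,\vartheta_2$ are exactly those that make the leading contributions share a common power of $t$: in Case~I the nonlinear term $t^{-n/q}\Lambda_n$ balances the linear term $t^{-(n+d)/q}\Lambda_{n+d}$, while the defining equality $m=(2p-n)/(2p-(n+d))$ forces $\vartheta_1=\vartheta_2$ and, in Case~III, brings in addition the constant $t^{-2p/q}\mu_{2p}$ at the same order (in Case~II only the nonlinear term and $\mu_{2p}$ survive, the $\Lambda_{n+d}$ term being subdominant). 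Collecting terms — and, when the resonance $n+d=q$ occurs, absorbing the contribution $\vartheta_j t^{-1}\zeta$ produced by differentiating $t^{-\vartheta_j}$ into the linear coefficient, which is the origin of the Kronecker shifts $\delta_{n+d,q}\vartheta_j$ — the equation takes the form
\begin{gather*}
\frac{d\zeta}{dt}=t^{-\beta_j}\big(Q_j(\zeta)+r_j(\zeta,t)\big),\qquad r_j(\zeta,t)=\mathcal O(t^{-\kappa_j}),
\end{gather*}
where the remainder decay $\kappa_j=\min\{q^{-1},\vartheta_j\}$ comes from the $\mathcal O(v)$ corrections in \eqref{ass22} (contributing $t^{-\vartheta_j}$) and from passing to the next index in the sum (contributing $t^{-1/q}$). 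A short computation gives $\beta_j=(n+d)/q$ in Cases~I and~III, so that, since $(n+d,p,q)\in\Sigma_1$ forces $n+d\le q$, the resonance $\beta_j=1$ occurs precisely when $n+d=q$; in Case~II one checks $\beta_2<1$ throughout.

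Next I would locate the equilibrium tracked by the solution. Introducing the slow time $\tau=\int^{t}s^{-\beta_j}\,ds$, which grows polynomially when $\beta_j<1$ and logarithmically when $\beta_j=1$ (i.e. $n+d=q$), the equation becomes $d\zeta/d\tau=Q_j(\zeta)+o(1)$. The hypotheses in each case are exactly those guaranteeing a positive root $\xi_j$ of $Q_j$ with $Q_j'(\xi_j)<0$, hence a hyperbolically stable equilibrium of the limiting autonomous equation. In Case~I the sign conditions $\lambda_{n,m}<0$ and $\lambda_{n+d}+\delta_{n+d,q}\vartheta_1>0$ yield the simple nonzero root $\xi_1=|(\lambda_{n+d}+\delta_{n+d,q}\vartheta_1)/\lambda_{n,m}|^{1/(m-1)}$; in Case~II the condition $\lambda_{n,m}<0$ makes the root $\xi_2=(\mu_{2p}/|\lambda_{n,m}|)^{1/m}$ of $Q_2$ stable. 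In Case~III the two displayed subcases are precisely the conditions under which $Q_3$ possesses a positive zero $\xi_3$ with $Q_3'(\xi_3)<0$: for $\lambda_{n,m}<0$ one has $Q_3(0)=\mu_{2p}>0$ and $Q_3\to-\infty$, giving such a root by the intermediate value theorem, while for $\lambda_{n,m}>0$ a direct check shows $Q_3$ attains its minimum at $\zeta=K^{1/m}$ with value $\mu_{2p}-\lambda_{n,m}(m-1)K$, so the threshold $\mu_{2p}<\lambda_{n,m}(m-1)K$ is exactly what produces two positive roots, of which the smaller carries $Q_3'<0$.

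Finally I would construct the particular solution tracking $\xi_j$ and extract its rate. Exploiting the scalar monotone structure I would use barriers $u^{\pm}(t)=t^{-\vartheta_j}(\xi_j\pm\phi(t))$ with $\phi(t)\downarrow 0$ chosen from the linearized decay (a suitable power $t^{-\gamma}$ for $n+d<q$, a logarithmic‑time exponential for $n+d=q$), and verify for large $t$ the differential inequalities $\dot u^{-}\le\Lambda(u^{-},t)$ and $\dot u^{+}\ge\Lambda(u^{+},t)$ using $Q_j'(\xi_j)<0$ together with the bound $r_j=\mathcal O(t^{-\kappa_j})$; any solution launched between the barriers is then trapped and forced to $\xi_j$ (equivalently, one may recast $\zeta-\xi_j$ as an integral equation and apply a contraction on a weighted space). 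The approach rate is dictated by the slower of the intrinsic contraction $|Q_j'(\xi_j)|$ and the remainder decay $\kappa_j$, namely $\varkappa_j=\min\{\kappa_j,|Q_j'(\xi_j)|\}$: in the resonant regime $n+d=q$ the slow time is $\tau=\log t$, so this contraction reads off as the algebraic rate $t^{-(1-\epsilon)\varkappa_j}$, the loss $\epsilon$ absorbing logarithmic corrections, whereas for $n+d<q$ the polynomial growth of $\tau$ makes the transient negligible and only convergence is recorded. The main obstacle I anticipate is this rate bookkeeping in the resonant logarithmic regime — keeping the competition between the linear contraction and the algebraically small remainder sharp enough to reach $t^{-(1-\epsilon)\varkappa_j}$ for every $\epsilon\in(0,1)$ — together with the correct selection of the stable root in Case~III when $\lambda_{n,m}>0$, where two positive equilibria coexist and only the smaller is attracting.
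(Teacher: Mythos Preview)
Your proposal is correct and follows essentially the same route as the paper: the self-similar substitution $u=t^{-\vartheta_j}\zeta$ reducing \eqref{ueq1} to $\dot\zeta=t^{-\chi_j}\bigl(Q_j(\zeta)+\mathcal O(t^{-\kappa_j})\bigr)$ with $\chi_1=\chi_3=(n+d)/q$ and $\chi_2<1$, the identification of the stable root $\xi_j$ of $Q_j$ in each case (including your analysis of the two-root situation in Case~III when $\lambda_{n,m}>0$), and the extraction of the rate $\varkappa_j$ from the competition between linear contraction and remainder decay. The paper carries out the final construction by centering at $\xi_j$ from the outset and integrating the Gr\"onwall-type inequality $d|\xi|/dt\le t^{-\chi_j}\bigl(-\gamma_j|\xi|+C_0 t^{-\kappa_j}\bigr)$ directly, rather than via barriers or a contraction, but for this scalar monotone problem the two devices are interchangeable.
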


As in the case of assumption \eqref{ass21}, the following theorem shows that the solutions $u_1(t)$, $u_2(t)$ and $u_3(t)$ are stable in the stochastic system \eqref{Veq}.
\begin{Th}\label{Th3}
Let system \eqref{FulSys} satisfy \eqref{zero}, \eqref{lip}, \eqref{H0as}, \eqref{HFBas}, \eqref{Azero} and $(n+d,p,q)\in\Sigma_1$ be integers such that assumptions \eqref{ass1} and \eqref{ass22} hold. Then,
\begin{itemize}
\item  in {\bf Case I}, for all $\varepsilon_1>0$ and $\varepsilon_2>0$ there exist $\delta_0>0$ and $t_0>0$ such that the solution ${\bf z}(t)$ of system \eqref{FulSys}
with initial data ${\bf z}(t_0)={\bf z}_0$, $|\rho({\bf z}_0,t_0;\zeta_1(t_0),\vartheta_1)|< \delta_0$, satisfies
\begin{gather*} 
\mathbb P\left(\sup_{ t\geq t_0 }  |\rho({\bf z}(t),t;\zeta_1(t),\vartheta_1)|>\varepsilon_1\right)<\varepsilon_2.
\end{gather*}
\item  in {\bf Case II}, for all $\varepsilon_1>0$ and $\varepsilon_2>0$ there exist $\delta_0>0$ and $t_0>0$ such that the solution ${\bf z}(t)$ of system \eqref{FulSys}
with initial data ${\bf z}(t_0)={\bf z}_0$, $|\rho({\bf z}_0,t_0;\zeta_2(t_0),\vartheta_2)|< \delta_0$, satisfies
\begin{gather*} 
\mathbb P\left(\sup_{ t\geq t_0 }  |\rho({\bf z}(t),t;\zeta_2(t),\vartheta_2)|>\varepsilon_1\right)<\varepsilon_2.
\end{gather*}
\item  in {\bf Case III}, for all $\varepsilon_1>0$ and $\varepsilon_2>0$ there exist $\delta_0>0$ and $t_0>0$ such that the solution ${\bf z}(t)$ of system \eqref{FulSys}
with initial data ${\bf z}(t_0)={\bf z}_0$, $|\rho({\bf z}_0,t_0;\zeta_3(t_0),\vartheta_2)|< \delta_0$, satisfies
\begin{gather*} 
\mathbb P\left(\sup_{ t\geq t_0 }  |\rho({\bf z}(t),t;\zeta_3(t),\vartheta_2)|>\varepsilon_1\right)<\varepsilon_2.
\end{gather*}
\end{itemize}
\end{Th}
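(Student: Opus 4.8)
My plan is to treat the three cases by a single Lyapunov scheme, parallel to the proof of Theorem~\ref{Th2}, in which the exponent $\vartheta_j$, the limit $\xi_j$ and the root structure of $Q_j$ replace the corresponding objects of Theorem~\ref{Th2}. I would first invoke Theorem~\ref{Th1} to pass to the system \eqref{Veq} in the slow--fast variables $(v,\varphi)$, valid for $t\ge t_\ast$ on the relevant energy range. Because the Lyapunov function below depends on the energy variable alone, the $2\pi$-periodic dependence of $F$, $G$ and $\tilde\beta_{i,j}$ on $\varphi$ never has to be resolved and all generator estimates are taken uniformly in $\varphi$; the averaged drift $\Lambda(v,t)$ carries the whole reduced dynamics \eqref{ueq1}, whose particular solutions $u_1,u_2,u_3$ and their asymptotics are supplied by Lemma~\ref{Lem3}.

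The key variable is the rescaled deviation $\tilde\rho(t)=t^{\vartheta_j}v(t)-\zeta_j(t)$, which I would relate to $\rho(\mathbf z,t;\zeta_j(t),\vartheta_j)=t^{\vartheta_j}H_0-\zeta_j(t)$. Using the near-identity form $v=H_0+\sum_{k\ge1}t^{-k/q}v_k$ with $v_k=\mathcal O(H_0)$ for $k<2p$ and $v_k=\mathcal O(1)$ for $k\ge 2p$, together with $H_0\sim\xi_j t^{-\vartheta_j}$ on $\{|\rho|\le\varepsilon_1\}$, one checks $t^{\vartheta_j}(v-H_0)=\mathcal O(t^{-1/q})+\mathcal O(t^{\vartheta_j-2p/q})\to 0$, so that $\rho=\tilde\rho+o(1)$ uniformly and it suffices to control $\tilde\rho$. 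Applying It\^o's formula and subtracting the reduced equation \eqref{ueq1} satisfied by $u_j=t^{-\vartheta_j}\zeta_j$, the drift of $\tilde\rho$ linearises, at leading order, to $t^{-\gamma_j}Q_j'(\xi_j)\,\tilde\rho$ with a positive exponent $\gamma_j$ fixed by the scaling; here the negativity $Q_1'(\xi_1)=(1-m)(\lambda_{n+d}+\delta_{n+d,q}\vartheta_1)<0$ in Case~I, and $Q_2'(\xi_2)<0$, $Q_3'(\xi_3)<0$ (the latter asserted in Lemma~\ref{Lem3}) in Cases~II and~III, furnishes the stabilising mechanism.

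With this structure in hand I would take a Lyapunov function of the energy deviation, e.g. $U(\tilde\rho)=\tilde\rho^2$ or a bounded smooth surrogate, and estimate its generator in the form $\mathcal L U\le -c\,t^{-\gamma_j}\tilde\rho^2+b(t)$, where $b(t)$ gathers the quadratic variation of the martingale part. The coefficient $\tilde\beta_{1,j}=\mathcal O(t^{-p/q}\sqrt{H_0})$ of the latter does not vanish at the target, since the noise does not preserve the equilibrium, so the delicate point is to show that under each set of hypotheses the stabilising term controls $b(t)$ on the annulus $\{\delta_0\le|\tilde\rho|\le\varepsilon_1\}$ for $t\ge t_0$ large. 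In Case~I this reduces to $\vartheta_1-2p/q+\gamma_1<0$ with $\gamma_1=(n+d)/q$, which is exactly equivalent to the hypothesis relating $m$ to $(2p-n)/(2p-(n+d))$; in Cases~II and~III the sign conditions on $\lambda_{n,m}$, $\lambda_{n+d}$ and the size condition $\mu_{2p}<\lambda_{n,m}(m-1)K$ are the corresponding admissibility thresholds, and they determine which terms of $\Lambda$ and which part of the noise realise the leading balance.

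Finally, once $\mathcal L U\le 0$ is established on the annulus $\{\delta_0\le|\tilde\rho|\le\varepsilon_1\}$ for $t\ge t_0$, I would convert it into \eqref{defineq2} by a Khasminskii-type argument: with $\tau=\inf\{t\ge t_0:|\tilde\rho(t)|\ge\varepsilon_1\}$ the stopped process $U(\tilde\rho(t\wedge\tau))$ is a supermartingale along excursions through the annulus, whence $\mathbb P(\tau<\infty)\le U(\tilde\rho(t_0))/\inf_{|\tilde\rho|=\varepsilon_1}U$, which is made smaller than $\varepsilon_2$ by taking $\delta_0$ small; passing back through $\rho=\tilde\rho+o(1)$ and enlarging $t_0$ yields the estimate for $\rho$. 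I expect the main obstacle to be exactly this generator estimate on a full neighbourhood rather than to leading order: one must simultaneously dominate the nonlinear remainder $\Lambda(v,t)-\Lambda(u_j,t)-\partial_v\Lambda(u_j,t)(v-u_j)$, the deterministic remainder $F=\mathcal O(t^{-(N+1)/q})$ and the non-vanishing diffusion, and verify that the case inequalities are the sharp conditions under which the sign of $\mathcal L U$ is governed by $Q_j'(\xi_j)$. The three cases differ only in which terms enter the leading balance, so once Case~I is settled the remaining two follow by the same computation with $\xi_2,\xi_3$ and $Q_2,Q_3$ in place of $\xi_1,Q_1$.
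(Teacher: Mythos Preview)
Your overall architecture is the paper's: pass to $(v,\varphi)$ via Theorem~\ref{Th1}, center on the reduced solution $u_j$ of Lemma~\ref{Lem3}, build a Lyapunov function of the rescaled energy deviation, and finish with a supermartingale/Doob estimate. The relation $\rho=\tilde\rho+o(1)$ is also handled as in the paper (there via \eqref{exch11}--\eqref{est} and an added $Ct^{-1/q}$ correction). Where you diverge is the choice of Lyapunov function, and that choice is where the gap lies.

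You propose $U=\tilde\rho^{\,2}$, and correctly observe that this produces in the generator an extra nonnegative term $b(t)=t^{2\vartheta_j}\sum_j\tilde\beta_{1,j}^{\,2}$ which does not vanish on the target $\tilde\rho=0$. Your plan is that the case hypotheses force $b(t)$ to be dominated by the damping $|Q_j'(\xi_j)|\,t^{-\chi_j}\tilde\rho^{\,2}$. This is right in Case~I: with $\tilde\beta_{1,j}\sim t^{-p/q}\sqrt{v}$ and $v\sim\xi_1t^{-\vartheta_1}$ one gets $b(t)\sim t^{\vartheta_1-2p/q}$, and the inequality $\vartheta_1-2p/q<-\chi_1$ is indeed equivalent to $m>(2p-n)/(2p-(n+d))$. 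But in Cases~II and~III one has $\vartheta_2+\chi_2=2p/q$ identically (this is the very scaling that defines $\vartheta_2$), so $b(t)$ and the damping are of the \emph{same} order $t^{-\chi_j}$. On the annulus $\{\delta_0\le|\tilde\rho|\le\varepsilon_1\}$ the comparison then becomes $C\xi_j\le |Q_j'(\xi_j)|\,\delta_0^{\,2}$ with a fixed $C>0$, which fails for small $\delta_0$; no choice of $t_0$ rescues it. The sign conditions on $\lambda_{n,m},\lambda_{n+d}$ and the bound $\mu_{2p}<\lambda_{n,m}(m-1)K$ are not ``admissibility thresholds'' for this domination; they are exactly the conditions guaranteeing that $Q_j$ has a root $\xi_j>0$ with $Q_j'(\xi_j)<0$, nothing more.

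The paper avoids this obstacle by taking the Lyapunov function to be $|\Theta_j|$ rather than $\Theta_j^{\,2}$, where $\Theta_j=t^{\vartheta_j}\big(V_N(I,\Phi,t)-u_j(t)\big)$. Away from $\Theta_j=0$ one has $L|\Theta_j|=\operatorname{sgn}(\Theta_j)\,L\Theta_j$, so no quadratic--variation term appears; all second--order diffusion contributions are already inside $LV_N=\Lambda+F$ via the construction of Theorem~\ref{Th1}. Choosing $N\ge q(1+\vartheta_j)$ makes the sole remainder $t^{\vartheta_j}F=\mathcal O(t^{-1-1/q})$, which is absorbed by adding $(M_1+qM_2)t^{-1/q}$ to the Lyapunov function, giving $L\mathcal V_j\le 0$ on the full neighbourhood in all three cases uniformly. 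Replacing $\tilde\rho^{\,2}$ by $|\tilde\rho|$ (plus a decaying correction) in your scheme would close the gap.
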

The proof is contained in Section~\ref{Sec5}.
 
If $(n,p,q)\in \Sigma_2$, the long-term behaviour of solutions is determined by the zeros of the coefficient $\Lambda_n(v)$. Assume that
\begin{align}
\label{ass3}
&\exists \, \xi_\ast\in (0,E_0): \quad \Lambda_n(\xi_\ast)=0, \quad \Lambda_n'(\xi_\ast)\neq 0,
\end{align}
and consider an additional assumption on the intensity of stochastic perturbations:
\begin{gather}
\label{asslimc} \exists\, \mu > 0 : \quad  \frac{1}{2}|{\hbox{\rm tr}}({\bf A}^T{\bf A})| \leq  \mu t^{-\frac{2p}{q}}
\end{gather}
for all $(x, y)\in \mathcal B_r$ and $t\geq s$. Let us show that, in this case, damped perturbations can lead to the appearance of states close to periodic solutions of the limiting system. In particular, we have the following:
\begin{Lem}\label{Lem4}
Let $(n,p,q)\in\Sigma_2$ and assumption \eqref{ass3} hold. Then, there exists a particular solution $u_\ast(t)$ of equation \eqref{ueq1} such that 
\begin{gather}\label{uastasc}
u_\ast(t)=  \xi_\ast+\begin{cases} \mathcal O(1), & n=2p<q, \\
\mathcal O\big(t^{- (1-\epsilon)\varkappa_\ast}\big), \quad \forall\, \epsilon\in (0,1), & n=2p\geq q, 
\end{cases}
\end{gather}
as $t\to\infty$, where
$\varkappa_\ast= \displaystyle  \min\left\{ q^{-1},|\Lambda_{n}'(\xi_\ast)|\right\}$.
\end{Lem}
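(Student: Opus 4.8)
The plan is to construct the particular solution $u_\ast(t)$ as a perturbation of the constant $\xi_\ast$, exploiting that $\xi_\ast$ is a nondegenerate zero of the leading coefficient $\Lambda_n(v)$ with $n=2p$. Since $(n,p,q)\in\Sigma_2$ means $n=2p$, the reduced equation \eqref{ueq1} reads $du/dt=\sum_{k=2p}^{N}t^{-k/q}\Lambda_k(u)$, so the leading term on the right-hand side is $t^{-2p/q}\Lambda_n(u)$ and all remaining terms are of order $t^{-(2p+1)/q}$ or smaller. The natural ansatz is therefore $u_\ast(t)=\xi_\ast+w(t)$ with $w(t)\to 0$, and I would first substitute this into \eqref{ueq1} and Taylor-expand $\Lambda_n(\xi_\ast+w)=\Lambda_n'(\xi_\ast)w+\mathcal O(w^2)$ using assumption \eqref{ass3}. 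This yields an equation of the form
\begin{gather*}
\frac{dw}{dt}=t^{-\frac{2p}{q}}\left(\Lambda_n'(\xi_\ast)w+\mathcal O(w^2)\right)+\mathcal O\big(t^{-\frac{2p+1}{q}}\big),
\end{gather*}
where the inhomogeneous remainder comes from $\sum_{k>n}t^{-k/q}\Lambda_k(\xi_\ast)$.

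The key structural feature is the sign and size of $\Lambda_n'(\xi_\ast)$, which governs the linearized decay. Treating the linear part as the dominant contraction, I would introduce the integrating factor associated with $\int^t \tau^{-2p/q}\Lambda_n'(\xi_\ast)\,d\tau$ and rewrite the equation for $w$ as a fixed-point problem (a Banach contraction on a suitable weighted space of decaying functions). The decay rate of $w$ is determined by the competition between the forcing term of order $t^{-(2p+1)/q}$ and the effective linear damping: when $2p<q$, the integrating factor grows only subexponentially and the balance gives $w=\mathcal O(1)$ (bounded, tending to $\xi_\ast$), while when $2p\geq q$ the accumulated damping $\int^t\tau^{-2p/q}\,d\tau$ is strong enough that $w$ inherits an algebraic decay rate $t^{-(1-\epsilon)\varkappa_\ast}$ for every $\epsilon\in(0,1)$, with $\varkappa_\ast=\min\{q^{-1},|\Lambda_n'(\xi_\ast)|\}$. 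The two exponents in this minimum arise respectively from the forcing scale $t^{-1/q}$ and from the linear relaxation rate $|\Lambda_n'(\xi_\ast)|$; I would verify both by substituting the trial decay $w\sim t^{-\gamma}$ and matching powers. The $\epsilon$-loss is the standard artifact of converting the logarithmic or borderline integrals into clean power bounds.

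The main obstacle I anticipate is the borderline case $2p=q$, where the integrating factor behaves like a power $t^{\Lambda_n'(\xi_\ast)}$ (because $\int^t\tau^{-1}\,d\tau=\log t$) and must be compared carefully against the forcing. Here the constant $\Lambda_n'(\xi_\ast)$ enters the exponent directly, and one must ensure $\Lambda_n'(\xi_\ast)<0$ (i.e., that $\xi_\ast$ is an attracting zero of $\Lambda_n$) for the construction to yield a decaying $w$; this is presumably guaranteed implicitly by the selection of the stable branch, and the $\min$ in $\varkappa_\ast$ precisely records whether the forcing or the relaxation dominates. Assumption \eqref{asslimc} plays no role in the existence of $u_\ast(t)$ itself (which is purely deterministic), but it controls the size of the diffusion and will be invoked in the subsequent stability theorem; for the present lemma I would only need the deterministic estimates on $\Lambda_k$ from Theorem~\ref{Th1}. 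Once $w$ is obtained as a fixed point with the stated asymptotics, setting $u_\ast(t)=\xi_\ast+w(t)$ completes the proof.
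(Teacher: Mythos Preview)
Your approach coincides with the paper's: substitute $u(t)=\xi_\ast+\xi(t)$, expand $\Lambda_n(\xi_\ast+\xi)=\Lambda_n'(\xi_\ast)\xi+\mathcal O(\xi^2)$ using assumption~\eqref{ass3}, and reduce the problem to controlling the linearized equation against the $\mathcal O(t^{-(n+1)/q})$ forcing. The paper phrases the final step as the scalar differential inequality
\[
\frac{d|\xi|}{dt}\le t^{-n/q}\bigl(-\gamma_\ast|\xi|+C_0t^{-1/q}\bigr),\qquad \gamma_\ast=(1-\epsilon)|\Lambda_n'(\xi_\ast)|,
\]
which it integrates Gronwall-style by pointing back to the computation in the proof of Lemma~\ref{Lem1}, rather than via a Banach contraction; the two devices are interchangeable here. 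Your remark that the construction tacitly requires $\Lambda_n'(\xi_\ast)<0$ is accurate and matches what the paper's inequality uses. One small correction to your heuristic: when $2p>q$ the integral $\int^t\tau^{-2p/q}\,d\tau$ \emph{converges}, so the damping is not ``strong''; in that regime the algebraic decay of the particular solution comes from the integrability of the $\mathcal O(t^{-(2p+1)/q})$ forcing (in your fixed-point setup this means integrating from $t=\infty$), and the resulting rate is actually faster than the stated $\mathcal O(t^{-(1-\epsilon)\varkappa_\ast})$, so the lemma's bound still holds.
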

In this case, it can be shown that the solution $u_\ast(t)$ is stochastically stable in system \eqref{FulSys} on a finite but asymptotically long time interval as $\mu\to 0$. 
\begin{Th}\label{Th4}
Let system \eqref{FulSys} satisfy \eqref{zero}, \eqref{lip}, \eqref{H0as}, \eqref{HFBas}, \eqref{Azero}, \eqref{asslimc} and $(n,p,q)\in\Sigma_2$ be integers such that assumptions \eqref{ass1} and \eqref{ass3} hold. 
 If $\Lambda_n'(\xi_\ast)<0$, then for all $\varepsilon_1>0$ and $\varepsilon_2>0$ there exist $\delta_0>0$ and $t_0>0$ such that the solution ${\bf z}(t)$ of system \eqref{FulSys}
with initial data ${\bf z}(t_0)={\bf z}_0$, $|\rho({\bf z}_0,t_0;u_\ast(t_0),0)|< \delta_0$, satisfies
\begin{gather}\label{defineq6}
\mathbb P\left(\sup_{ 0\leq t-t_0\leq \mathcal T }  |\rho({\bf z}(t),t;u_\ast(t),0)|>\varepsilon_1\right)<\varepsilon_2,
\end{gather}
where $\mathcal T=C\mu^{-1}\delta_0^2 t_0^{2p/q}$ if $2p<q$, $\mathcal T=t_0 (e^{C\mu^{-1}\delta_0^2}-1)$ if $2p=q$ and $\mathcal T=\infty$ if $2p>q$ with some $C={\hbox{\rm const}}>0$.
\end{Th}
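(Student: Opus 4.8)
The plan is to reduce the claim, via Theorem~\ref{Th1}, to a stability statement for the scalar component $v(t)$ of the transformed system \eqref{Veq}, and then to control the deviation of $v$ from the particular solution $u_\ast(t)$ of Lemma~\ref{Lem4} by means of a \emph{time-dependent} stochastic Lyapunov function. First I would fix $\sigma_0<\xi_\ast$ and apply Theorem~\ref{Th1} on $\mathcal D(\sigma_0,E_0)$, so that it suffices to work with \eqref{Veq} for $t\geq t_\ast$ and $v$ near $\xi_\ast$. Since the energy change of variables is near-identity, $v-H_0(x,y)=\mathcal O(t^{-1/q})$ uniformly, hence $\rho(\mathbf z(t),t;u_\ast(t),0)=(v(t)-u_\ast(t))+\mathcal O(t^{-1/q})$; it is therefore enough to estimate $\eta(t):=v(t)-u_\ast(t)$ and to absorb the $\mathcal O(t^{-1/q})$ discrepancy by taking $t_0$ large. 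Using $u_\ast'(t)=\Lambda(u_\ast(t),t)$, the variable $\eta$ satisfies $d\eta=\big(\Lambda(v,t)-\Lambda(u_\ast,t)+F(v,\varphi,t)\big)\,dt+\sum_{j}\tilde\beta_{1,j}(v,\varphi,t)\,dw_j$.

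Next I would apply It\^o's formula to $U:=\eta^2$, which gives $dU=(\mathcal A U)\,dt+2\eta\sum_j\tilde\beta_{1,j}\,dw_j$ with
\begin{gather*}
\mathcal A U=2\eta\big(\Lambda(v,t)-\Lambda(u_\ast,t)+F(v,\varphi,t)\big)+\sum_{j=1}^2\tilde\beta_{1,j}^2(v,\varphi,t).
\end{gather*}
Since $n=2p$, the leading drift contribution is $t^{-2p/q}(\Lambda_n(v)-\Lambda_n(u_\ast))=t^{-2p/q}\Lambda_n'(\theta)\eta$ with $\theta$ between $u_\ast$ and $v$; by $\Lambda_n'(\xi_\ast)<0$, continuity, and $u_\ast(t)\to\xi_\ast$ (Lemma~\ref{Lem4}), on a small neighbourhood $|\eta|<\varepsilon$ one has $\Lambda_n'(\theta)\leq-2\gamma$ for some $\gamma>0$. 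The higher-order drift terms ($k>2p$) contribute $\mathcal O(t^{-(2p+1)/q}\eta^2)$ and are absorbed into the leading negative term, while $2\eta F=\mathcal O(t^{-(N+1)/q})$ is absorbed into the diffusion contribution, both for $t_0$ large. The crucial diffusion estimate is $\sum_{j}\tilde\beta_{1,j}^2\leq C\mu\,t^{-2p/q}$: this does \emph{not} follow from the $\mathcal O(t^{-1/q})$ bound of Theorem~\ref{Th1}, but from assumption~\eqref{asslimc}, since the $v$-diffusion equals $(\partial_E V_N)\,\nabla H_0^T\mathbf A+\mathcal O(t^{-1/q})$ and $|\nabla H_0^T\mathbf A|^2\leq|\nabla H_0|^2\,{\hbox{\rm tr}}(\mathbf A^T\mathbf A)\leq 2\mu|\nabla H_0|^2 t^{-2p/q}$ on the compact region. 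Collecting terms yields, for $t\geq t_0$ with $t_0$ large,
\begin{gather*}
\mathcal A U\leq-\gamma\,t^{-\frac{2p}{q}}U+C\mu\,t^{-\frac{2p}{q}}\leq C\mu\,t^{-\frac{2p}{q}} \quad\text{on } \{|\eta|<\varepsilon\}.
\end{gather*}

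The final step is a supermartingale argument with the augmented function $U(\eta)+\Psi(t)$, where $\Psi(t):=C\mu\int_t^{T}s^{-2p/q}\,ds\geq 0$ and $T=t_0+\mathcal T$ (or $T=\infty$ when $2p>q$). The estimate above gives $\mathcal A[U+\Psi]=\mathcal A U+\Psi'(t)\leq-\gamma t^{-2p/q}U\leq 0$ while $|\eta|<\varepsilon$, so stopping at $\tau_\varepsilon:=\inf\{t\geq t_0:|\eta(t)|\geq\varepsilon\}$ renders $U(\eta(t\wedge\tau_\varepsilon))+\Psi(t\wedge\tau_\varepsilon)$ a nonnegative supermartingale on $[t_0,T]$. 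The maximal inequality then gives
\begin{gather*}
\mathbb P\Big(\sup_{t_0\leq t\leq T}|\eta(t)|\geq\varepsilon\Big)=\mathbb P(\tau_\varepsilon\leq T)\leq\frac{U(\eta(t_0))+\Psi(t_0)}{\varepsilon^2}\leq\frac{C\delta_0^2+C\mu\int_{t_0}^{T}s^{-2p/q}\,ds}{\varepsilon^2}.
\end{gather*}
Here $\mathcal T$ is calibrated so that $\mu\int_{t_0}^{T}s^{-2p/q}\,ds\leq C\delta_0^2$: the bound $\int_{t_0}^{t_0+\mathcal T}s^{-2p/q}\,ds\leq\mathcal T\,t_0^{-2p/q}$ gives the polynomial horizon $\mathcal T=C\mu^{-1}\delta_0^2 t_0^{2p/q}$ when $2p<q$; the identity $\int_{t_0}^{t_0+\mathcal T}s^{-1}\,ds=\log\frac{t_0+\mathcal T}{t_0}$ gives the exponential horizon $\mathcal T=t_0(e^{C\mu^{-1}\delta_0^2}-1)$ when $2p=q$; and $\int_{t_0}^{\infty}s^{-2p/q}\,ds=\frac{q}{2p-q}t_0^{1-2p/q}\to 0$ as $t_0\to\infty$ permits $T=\infty$, i.e. $\mathcal T=\infty$, when $2p>q$. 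Taking $\varepsilon=\varepsilon_1/2$ (and $\varepsilon_1$ small, which is harmless since the probability in \eqref{defineq6} is nonincreasing in $\varepsilon_1$), absorbing the $\mathcal O(t^{-1/q})$ gap between $\rho$ and $\eta$, and finally choosing $\delta_0$ small and $t_0$ large makes the right-hand side smaller than $\varepsilon_2$, which is \eqref{defineq6}.

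I expect the main obstacle to be the sharp diffusion bound $\sum_j\tilde\beta_{1,j}^2\leq C\mu t^{-2p/q}$ with the \emph{explicit} dependence on $\mu$, together with the exact calibration of $\mathcal T$: the entire trichotomy between $2p<q$, $2p=q$ and $2p>q$ and the precise formulas for $\mathcal T$ are dictated by the convergence or divergence of $\int^{\cdot}s^{-2p/q}\,ds$, so the time-dependent weight $\Psi$ must be matched to assumption~\eqref{asslimc} term by term rather than through the coarser $\mathcal O(t^{-1/q})$ estimate of Theorem~\ref{Th1}.
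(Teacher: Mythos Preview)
Your proposal is correct and follows essentially the same route as the paper. The paper also sets $\Theta_\ast=(V_N-u_\ast)^2$, obtains the generator bound $L\Theta_\ast\le t^{-2p/q}(-\gamma_\ast\Theta_\ast+\mu M_0)+M_2t^{-(N+1)/q}$ from $\Lambda_n'(\xi_\ast)<0$ and \eqref{asslimc}, augments by the same time weight $\zeta(t)=\int_t^{t_0+\mathcal T}s^{-2p/q}\,ds$ (with the obvious case split), and concludes by Doob's inequality; the only cosmetic differences are that the paper works directly in $(x,y)$ coordinates (so the diffusion term is read off as ${\hbox{\rm tr}}(\mathbf A^T\mathbf M\mathbf A)\le\mu M_0 t^{-2p/q}$ without tracking the change of variables) and that it absorbs the residual $F$-term and the $\rho\leftrightarrow\eta$ gap by adding an explicit $(M_1+qM_2)t^{-1/q}$ to the Lyapunov function rather than by enlarging $t_0$.
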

The proof is contained in Section~\ref{Sec6}.

Finally, if $(n,p,q)\in\Sigma_3$, then $\Lambda(v,t)$ decays fast enough and the solutions of the perturbed system can tend to any $\xi_\ast\in (0,E_0)$ such that  
\begin{align}
\label{ass4}
& \Lambda_n(\xi_\ast)\neq 0, \quad \Lambda_n'(\xi_\ast)\neq 0.
\end{align}
In this case, we have the following:
\begin{Lem}\label{Lem5}
Let $(n,p,q)\in\Sigma_3$ and assumption \eqref{ass3} hold. Then, for all $\xi_\ast\in (0,E_0)$ such that $\Lambda_n(\xi_\ast)\neq 0$ and $\Lambda_n'(\xi_\ast)< 0$ there exists a particular solution $v_\ast(t)$ of equation \eqref{ueq1} such that 
\begin{gather}\label{uasast}
v_\ast(t)= \left(\xi_\ast+\mathcal O\big(t^{- \frac{1}{q}}\big)\right), \quad t\to\infty.
\end{gather}
\end{Lem}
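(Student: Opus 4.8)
The plan is to recast the reduced equation \eqref{ueq1} as a \emph{backward} integral equation whose solutions automatically tend to $\xi_\ast$, and then produce the required solution by a contraction argument. The decisive structural fact is that $(n,p,q)\in\Sigma_3$ forces $q<n$, so the slowest-decaying coefficient has exponent $n/q>1$ and the map $s\mapsto s^{-n/q}$ is integrable on $[t_\ast,\infty)$. Concretely, for every $k\ge n$ one has $\int_t^\infty s^{-k/q}\,ds=\frac{q}{k-q}\,t^{1-k/q}$, and all these tails are dominated by the slowest one, $t^{1-n/q}$. This integrability is the whole engine of the lemma.

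First I would fix $\xi_\ast\in(0,E_0)$ with $\Lambda_n(\xi_\ast)\neq 0$ and $\Lambda_n'(\xi_\ast)<0$, and choose $\delta>0$ so small that $I=[\xi_\ast-\delta,\xi_\ast+\delta]\subset(0,E_0)$. On $I$ the smooth coefficients $\Lambda_n,\dots,\Lambda_N$ and their derivatives are bounded, say by $\Lambda_{\max}$, with a common Lipschitz constant $L$. Any solution $u(t)$ of \eqref{ueq1} that stays in $I$ and satisfies $u(t)\to\xi_\ast$ must obey
\[
u(t)=\xi_\ast-\int_t^\infty \Lambda(u(s),s)\,ds ,
\]
and conversely any continuous solution of this equation solves \eqref{ueq1} and has limit $\xi_\ast$. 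This equivalence is licensed precisely by the convergence of $\int^\infty s^{-n/q}\,ds$.

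Next I would run the fixed-point argument. On the set
\[
\mathcal X=\Big\{u\in C([T,\infty)):\ \sup_{t\ge T}t^{1/q}\,|u(t)-\xi_\ast|\le C\Big\},
\]
which is closed (hence complete) in $C_b([T,\infty))$ with the sup metric, I define $\mathcal F[u](t)=\xi_\ast-\int_t^\infty\Lambda(u(s),s)\,ds$. From $|\Lambda(u(s),s)|\le \Lambda_{\max}\sum_{k=n}^N s^{-k/q}\le \Lambda_{\max}(N-n+1)s^{-n/q}$ one gets $|\mathcal F[u](t)-\xi_\ast|\le \Lambda_{\max}(N-n+1)\frac{q}{n-q}\,t^{1-n/q}$; since $\Sigma_3$ gives $n-q\ge1$, i.e. $1-n/q\le-1/q$, this is $\le C\,t^{-1/q}$ with $C=\Lambda_{\max}(N-n+1)\frac{q}{n-q}$, and for $T$ large it is also $\le\delta$, so $\mathcal F$ maps $\mathcal X$ into itself and keeps values in $I$. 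The Lipschitz bound $\big|\mathcal F[u_1](t)-\mathcal F[u_2](t)\big|\le L(N-n+1)\frac{q}{n-q}\,T^{1-n/q}\,\sup_{s\ge T}|u_1(s)-u_2(s)|$ makes $\mathcal F$ a contraction once $T$ is large. Its unique fixed point $v_\ast$ is the sought solution, and membership in $\mathcal X$ together with $t^{1-n/q}=\mathcal O(t^{-1/q})$ yields exactly \eqref{uasast}.

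\textbf{Main obstacle.} The two points requiring genuine care are: (i) the equivalence between \eqref{ueq1} and the backward integral equation, which rests entirely on $\int^\infty s^{-n/q}\,ds<\infty$ guaranteed by $n>q$ (without it the $\int_t^\infty$ would diverge and the whole scheme collapses); and (ii) reconciling the natural decay rate $t^{1-n/q}$ delivered by the estimates with the stated rate $t^{-1/q}$ — the latter is just the worst case $n=q+1$, whereas $n>q+1$ produces faster decay absorbed into $\mathcal O(t^{-1/q})$. Note that $\Lambda_n'(\xi_\ast)<0$ is not actually used in the construction above: convergence to $\xi_\ast$ is forced by integrability alone, and the linearisation factor $\exp\!\big(\Lambda_n'(\xi_\ast)\int^t s^{-n/q}\,ds\big)$ merely stays bounded regardless of sign. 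The negativity is recorded here for use in the accompanying stochastic stability analysis.
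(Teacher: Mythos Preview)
Your argument is correct and complete. The paper proceeds differently: it substitutes $u(t)=\xi_\ast+\frac{q\Lambda_n(\xi_\ast)}{q-n}\,t^{1-n/q}+\xi(t)$ to cancel the leading constant term, obtains the differential inequality $\frac{d|\xi|}{dt}\le t^{-n/q}\bigl(-\gamma_\ast|\xi|+C_0t^{-1/q}\bigr)$ from the hypothesis $\Lambda_n'(\xi_\ast)<0$, and then appeals to the trapping/Gronwall machinery of Lemma~\ref{Lem1} to conclude $\xi(t)=\mathcal O(t^{-1/q})$. Your contraction-mapping construction on the backward integral equation is a genuinely different and in some ways cleaner route: it exploits directly the defining feature of $\Sigma_3$ (namely $n>q$, hence $\int^\infty s^{-n/q}\,ds<\infty$), is self-contained, delivers the sharper rate $t^{1-n/q}$ before relaxing it to the stated $t^{-1/q}$, and---as you rightly note---does not require the sign condition $\Lambda_n'(\xi_\ast)<0$ for mere existence. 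The paper's approach, in exchange, keeps the exposition uniform with Lemmas~\ref{Lem1}, \ref{Lem3}, \ref{Lem4} and makes the linearised stability coefficient $\gamma_\ast=(1-\epsilon)|\Lambda_n'(\xi_\ast)|$ explicit, which is precisely the quantity reused in the Lyapunov-function estimate of Theorem~\ref{Th5}.
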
 
The solution $v_\ast(t)$ turns out to be stable in the full stochastic system \eqref{FulSys}.
 \begin{Th}\label{Th5}
Let system \eqref{FulSys} satisfy \eqref{zero}, \eqref{lip}, \eqref{H0as}, \eqref{HFBas}, \eqref{Azero}, \eqref{asslimc}, $(n,p,q)\in\Sigma_3$ and $\xi_\ast\in (0,E_0)$ such that assumptions \eqref{ass1} and \eqref{ass4} hold.
 If $\Lambda_n'(\xi_\ast)<0$, then for all $\varepsilon_1>0$ and $\varepsilon_2>0$ there exist $\delta_0>0$ and $t_0>0$ such that the solution ${\bf z}(t)$ of system \eqref{FulSys}
with initial data ${\bf z}(t_0)={\bf z}_0$, $|\rho({\bf z}_0,t_0;v_\ast(t_0),0)|< \delta_0$, satisfies
\begin{gather*}
\mathbb P\left(\sup_{ t\geq t_0 }  |\rho({\bf z}(t),t;v_\ast(t),0)|>\varepsilon_1\right)<\varepsilon_2.
\end{gather*}
\end{Th}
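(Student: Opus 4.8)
The plan is to transfer the problem to the transformed system \eqref{Veq} provided by Theorem~\ref{Th1}, to measure the discrepancy between the energy-like variable $v(t)$ and the reference solution $v_\ast(t)$ of Lemma~\ref{Lem5}, and to show that a time-compensated square discrepancy is a supermartingale until $v$ leaves a small neighbourhood of $\xi_\ast$. First I would fix $\sigma_0$ so small that the segment $[\sigma_\ast,v_\ast]$ of Theorem~\ref{Th1} contains $\xi_\ast$ in its interior, and work for $v$ in a neighbourhood $|v-\xi_\ast|\le\varepsilon$ lying inside $(\sigma_\ast,v_\ast)$. Since $v_\ast$ solves $\dot v_\ast=\Lambda(v_\ast,t)$ for the same $\Lambda$ that appears in \eqref{Veq} (the lower coefficients vanish by \eqref{ass1}), the natural Lyapunov candidate $W(v,t)=(v-v_\ast(t))^2$ is independent of $\varphi$, so only the first line of \eqref{Veq} enters its It\^o generator:
\begin{gather*}
\mathcal L W = 2\big(v-v_\ast(t)\big)\big(\Lambda(v,t)-\Lambda(v_\ast(t),t)+F(v,\varphi,t)\big)+\sum_{j=1}^2\tilde\beta_{1,j}^2(v,\varphi,t).
\end{gather*}

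Next I would estimate the three contributions in the neighbourhood. A Taylor expansion gives $\Lambda(v,t)-\Lambda(v_\ast,t)=\partial_v\Lambda(v_\ast,t)(v-v_\ast)+\mathcal O\big(t^{-n/q}(v-v_\ast)^2\big)$ with $\partial_v\Lambda(v_\ast,t)=t^{-n/q}\big(\Lambda_n'(\xi_\ast)+o(1)\big)$; since $\Lambda_n'(\xi_\ast)<0$, for $\varepsilon$ small and $t$ large the linear bracket produces a restoring term $\le -c\,t^{-n/q}W$ with some $c>0$. The residual drift obeys $|2(v-v_\ast)F|\le C\varepsilon\,t^{-(N+1)/q}$ by the bound $F=\mathcal O(t^{-(N+1)/q})$ of Theorem~\ref{Th1}, while assumption \eqref{asslimc} forces $\sum_j\tilde\beta_{1,j}^2\le C\mu\,t^{-2p/q}$ uniformly in $\varphi$. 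Hence
\begin{gather*}
\mathcal L W \le -c\,t^{-\frac{n}{q}}W + C\varepsilon\,t^{-\frac{N+1}{q}}+ C\mu\,t^{-\frac{2p}{q}}.
\end{gather*}
The arithmetic of $\Sigma_3$ is decisive here: $q<n\le 2p$ yields $n/q>1$, $(N+1)/q\ge(2p+1)/q>1$ and $2p/q>1$, so the function $g(t)=C\varepsilon\,t^{-(N+1)/q}+C\mu\,t^{-2p/q}$ is integrable at infinity. Setting $\Phi(t)=\int_t^\infty g(s)\,ds$, the compensated process $U(v,t)=W(v,t)+\Phi(t)$ satisfies $\mathcal L U\le -c\,t^{-n/q}W\le 0$ in the neighbourhood for $t\ge t_0$ with $t_0$ large.

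Then I would run a standard Khasminskii-type stopping-time argument. With $\tau=\inf\{t\ge t_0: |v(t)-v_\ast(t)|\ge\varepsilon\}$, the process $U(v(t\wedge\tau),t\wedge\tau)$ is a non-negative supermartingale, so the supermartingale maximal inequality gives
\begin{gather*}
\mathbb P\Big(\sup_{t_0\le t<\tau}|v(t)-v_\ast(t)|\ge\varepsilon\Big)\le \frac{(v(t_0)-v_\ast(t_0))^2+\Phi(t_0)}{\varepsilon^2}.
\end{gather*}
Because $\Phi(t_0)=\mathcal O\big(\mu\,t_0^{1-2p/q}\big)+\mathcal O\big(\varepsilon\,t_0^{1-(N+1)/q}\big)\to0$ as $t_0\to\infty$ and the initial discrepancy is controlled by $\delta_0$, one chooses $t_0$ large and $\delta_0$ small so that this probability is below $\varepsilon_2$; this simultaneously forces $\tau=\infty$ on the complementary event, promoting the bound to all $t\ge t_0$. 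Finally, \eqref{exch11} gives $v-v_\ast=\big(H_0(x,y)-v_\ast\big)+\mathcal O(t^{-1/q})=\rho(\mathbf z,t;v_\ast(t),0)+\mathcal O(t^{-1/q})$ uniformly, so after enlarging $t_0$ and adjusting $\varepsilon_1,\delta_0$ the estimate on $|v-v_\ast|$ becomes the stated bound on $|\rho(\mathbf z(t),t;v_\ast(t),0)|$.

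The main obstacle I anticipate is that the diffusion term $\sum_j\tilde\beta_{1,j}^2$ is a strictly positive, angle-dependent source, so $W$ alone is not a supermartingale; the cure is the deterministic shift $\Phi(t)$, which is finite only because \eqref{asslimc} together with $(n,p,q)\in\Sigma_3$ renders both the noise intensity $t^{-2p/q}$ and the residual drift $t^{-(N+1)/q}$ integrable --- this is precisely why the conclusion holds for all $t\ge t_0$ rather than on a finite horizon as in Theorem~\ref{Th4}. A secondary difficulty is localization: the expansions of Theorem~\ref{Th1} and the sign of $\Lambda_n'(\xi_\ast)$ are only available near $\xi_\ast$, so the whole estimate must be carried out up to the exit time $\tau$, and the smallness of $U(v(t_0),t_0)$ must be leveraged to guarantee that this exit time is infinite with the required probability.
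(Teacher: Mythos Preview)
Your proposal is correct and follows essentially the same route as the paper: a squared-discrepancy Lyapunov function $(v-v_\ast)^2$ plus a deterministic compensator absorbing the integrable noise and residual-drift sources, a supermartingale/Doob argument up to the exit time, and a final transfer from $v-v_\ast$ to $\rho$ via the $\mathcal O(t^{-1/q})$ error in \eqref{exch11}. The paper phrases this in the original coordinates via $\Theta_\ast(x,y,t)=(V_N(I,\Phi,t)-v_\ast(t))^2$ and writes the compensator explicitly as $-\tfrac{q\mu M_0}{q-n}t^{1-n/q}+(M_1+qM_2)t^{-1/q}$ with $N=n$, but this is the same object as your $\Phi(t)=\int_t^\infty g$, and the key arithmetic ($q<n\le 2p\Rightarrow$ the sources are integrable) is identical.
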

The proof is contained in Section~\ref{Sec7}.

Let us note that Theorems~\ref{Th2} and \ref{Th3} describe the conditions that ensure the polynomial convergence of the solutions of perturbed system \eqref{FulSys} to the equilibrium $(0,0)$ of the limiting system \eqref{LimSys}. Theorem~\ref{Th4} describes the appearance of stochastically stable cycles in the perturbed system. Finally, in the case of Theorem~\ref{Th5}, the perturbed system behaves qualitatively like the corresponding limiting system. 

\section{Changes of variables}\label{Sec3}
In this section, we construct variables transformations that reduce system \eqref{FulSys} to the form \eqref{Veq}. 

\subsection{Energy-angle variables}
Define auxiliary functions
\begin{gather*}
X(\varphi,E)\equiv  x_\ast\left(\frac{\varphi}{\nu(E)},E\right), \quad  Y(\varphi,E)\equiv  y_\ast\left(\frac{\varphi}{\nu(E)},E\right)
\end{gather*} 
that are $2\pi$-periodic in $\varphi$ and satisfy the following autonomous system:
\begin{gather}\label{LimSys2pi}
    \nu(E)\frac{\partial X}{\partial \varphi}=\partial_Y H_0(X,Y), \quad
    \nu(E)\frac{\partial Y}{\partial \varphi}=-\partial_X H_0(X,Y).
\end{gather}
We use these functions to write system \eqref{FulSys} in the energy-angle variables $(E,\varphi)$.
Note that differentiating the identity $H_0(X(\varphi,E),Y(\varphi,E))\equiv E$ with respect to $E$ yields
\begin{gather*}
\det\frac{\partial(X,Y)}{\partial (\varphi,E)}=\begin{vmatrix}
        \partial_\varphi X & \partial_E X\\
        \partial_\varphi Y& \partial_E Y
    \end{vmatrix} \equiv  \frac{1}{\nu(E)}\neq 0, \quad E\in [0,E_0].
\end{gather*}
Hence, the mapping $(x,y)\mapsto (E,\varphi)$ is invertible for all $E\in [0,E_0]$ and $\varphi\in[0,2\pi)$. Denote by 
\begin{gather*}
	E=I(x,y),  \quad \varphi=\Phi(x,y)
\end{gather*}
the inverse transformation to \eqref{exch1}, and for any smooth function $U({\bf z},t)$, define the following operator associated with the system of stochastic differential equations \eqref{FulSys}:
\begin{gather*}
L U :=  \partial_t U  + \left(\nabla_{\bf z} U\right)^T {\bf a}+\frac{1}{2}{\hbox{\rm tr}}\left({\bf A}^T {\bf H}_{\bf z}(U){\bf A}\right),
\end{gather*} 
where  
\begin{gather*}
\nabla_{\bf z} U \equiv \begin{pmatrix}\partial_x U  \\  \partial_y U \end{pmatrix}, \quad 
{\bf H}_{\bf z}(U)\equiv 	
\begin{pmatrix}
		\partial_x^2 U & \partial_x\partial_y U \\
		\partial_y\partial_x U & \partial_y^2 U
\end{pmatrix}.
\end{gather*}
Note that $L$ plays an important role in the study of stochastic differential equations (see, for example,~\cite[\S 3.3]{RH12}). In the new variables $\hat{\bf z}=(E,\varphi)^T$ system \eqref{FulSys} takes the following form:
\begin{equation}
	\label{FulSys2}
	d \hat {\bf z} =  {\bf b}(\hat{\bf z},t) dt +  {\bf B}(\hat{\bf z},t)\,d{\bf w}(t),
\end{equation}
with the coefficients
\begin{gather*}
	{\bf b}(\hat{\bf z},t)\equiv \begin{pmatrix} L I \\ L \Phi\end{pmatrix}\Big|_\eqref{exch1}, \quad
	{\bf B}(\hat{\bf z},t)=\begin{pmatrix}
		\partial_x I & \partial_y I \\
		\partial_x \Phi & \partial_y \Phi
\end{pmatrix} {\bf A}({\bf z},t)\Big|_\eqref{exch1}
\end{gather*}
that are $2\pi$-periodic in $\varphi$. Moreover, it follows from \eqref{HFBas} that
\begin{gather}\label{bBas}
{\bf b}(\hat{\bf z},t)=\sum_{k=0}^\infty t^{-\frac{k}{q}} {\bf b}_k(E,\varphi), \quad {\bf B}({\bf z},t)=\sum_{k=1}^\infty t^{-\frac{k}{q}} {\bf B}_{k}(E,\varphi),  \quad t\to\infty,
\end{gather}
where
\begin{gather}
\label{b0bk}
\begin{split}
& {\bf b}_0(E,\varphi)\equiv\begin{pmatrix}  0 \\ \nu(E)\end{pmatrix}, \quad 
{\bf b}_k(E,\varphi)\equiv\begin{pmatrix}  f_k(E,\varphi) \\ g_k(E,\varphi)\end{pmatrix}, \quad {\bf B}_k(E,\varphi)\equiv\{\beta^k_{i,j}(E,\varphi)\}_{2\times 2}, \\ 
& f_k(E,\varphi)\equiv 
	\displaystyle \left(\nabla_{\bf z} I\right)^T {\bf a}_k+\frac{1}{2}\sum_{i+j=k}{\hbox{\rm tr}}\left({\bf A}_i^T {\bf H}_{\bf z}(I){\bf A}_j\right), \quad  \left(\beta_{1,1}^k, \beta_{1,2}^k\right)\equiv  \left(\nabla_{\bf z} I\right)^T {\bf A}_k,\\ 
& g_k(E,\varphi)\equiv 
	\displaystyle \left(\nabla_{\bf z} \Phi\right)^T {\bf a}_k+\frac{1}{2}\sum_{i+j=k}{\hbox{\rm tr}}\left({\bf A}_i^T {\bf H}_{\bf z}(\Phi){\bf A}_j\right), \quad \left(\beta_{2,1}^k, \beta_{2,2}^k\right)\equiv  \left(\nabla_{\bf z} \Phi\right)^T {\bf A}_k.
\end{split}
\end{gather}
It can easily be checked that 
\begin{gather}\label{deriv}
\begin{split}
  \partial_x \begin{pmatrix} I \\  \Phi \end{pmatrix} \Big|_{\eqref{exch1}} \equiv \begin{pmatrix} -\nu \partial_\varphi Y \\  \nu\partial_E Y \end{pmatrix},\quad \partial^2_x \begin{pmatrix} I \\  \Phi \end{pmatrix}\Big|_{\eqref{exch1}} \equiv \nu (\partial_E Y \partial_\varphi-\partial_\varphi Y\partial_E) \begin{pmatrix} -\nu \partial_\varphi Y \\  \nu\partial_E Y \end{pmatrix},\\
	\partial_y \begin{pmatrix} I \\  \Phi \end{pmatrix} \Big|_{\eqref{exch1}} \equiv \begin{pmatrix}  \nu\partial_\varphi X \\  - \nu\partial_E X \end{pmatrix},\quad 
\partial^2_y \begin{pmatrix} I \\  \Phi \end{pmatrix}\Big|_{\eqref{exch1}} \equiv \nu (\partial_\varphi X \partial_E-\partial_E X\partial_\varphi) \begin{pmatrix} \nu \partial_\varphi X \\  -\nu\partial_E X \end{pmatrix},\\
\partial_x\partial_y \begin{pmatrix} I \\  \Phi \end{pmatrix}\Big|_{\eqref{exch1}} \equiv \nu (\partial_\varphi X \partial_E-\partial_E X \partial_\varphi) \begin{pmatrix} -\nu \partial_\varphi Y \\  \nu\partial_E Y \end{pmatrix}.
\end{split}
\end{gather}
From \eqref{H0as} and \eqref{LimSys2pi} it follows that $X(\varphi,E)=\sqrt{2E}\cos \varphi+\mathcal O(E)$ and $Y(\varphi,E)=-\sqrt{2E}\sin\varphi+\mathcal O(E)$ as $E\to 0$ uniformly for all $\varphi\in\mathbb R$. Hence,
\begin{gather*}
{\bf H}_{\bf z}(I)=\begin{pmatrix} 1 & 0 \\ 0 & 1 \end{pmatrix}+\mathcal O(E^{\frac{1}{2}}), \quad 
{\bf H}_{\bf z}(\Phi)=\frac{1}{2E}\begin{pmatrix} \sin(2\varphi) & \cos(2\varphi) \\ \cos(2\varphi) & -\sin(2\varphi)   \end{pmatrix}+\mathcal O(E^{-\frac 12}), \quad E\to 0.
\end{gather*}
Combining this with \eqref{b0bk} and \eqref{deriv}, we obtain 
\begin{align*} 
  f_k(E,\varphi)&=
	\begin{cases}
		\mathcal O(E),&k<2p\\
		\mathcal O(1),&k\geq 2p
	\end{cases},
	\quad  
&g_k(E,\varphi)&=
	\begin{cases}
		\mathcal O(1),&k<2p\\
		\mathcal O(E^{-1}),&k\geq 2p
	\end{cases}, \\ 
 \beta^k_{1,j}(E,\varphi)&=
	\begin{cases}
		\mathcal O(E),&k<2p\\
		\mathcal O(E^{\frac 12}),&k\geq 2p
	\end{cases}, \quad 
&\beta^k_{2,j}(E,\varphi)&=
\begin{cases}
		\mathcal O(1),&k<2p\\
		\mathcal O(E^{-\frac 12}),&k\geq 2p
	\end{cases}, \quad j\in\{1,2\}
\end{align*}
as $E\to 0$ uniformly for all $\varphi\in\mathbb R$. In particular, 
\begin{gather}\label{f2p}
f_{2p}(E,\varphi)=\mu_{2p}+\mathcal O (E^{\frac 12}), \quad E\to 0.
\end{gather}
\subsection{Averaging}
From \eqref{FulSys2} and \eqref{b0bk} it follows that $\varphi(t)$ changes rapidly in comparison to potential variations of $E(t)$ for large values of $t$. Therefore, for further simplification of the system, we average the drift term in the first equation of system \eqref{FulSys2} over $\varphi$. Note that this trick is usually used in perturbation theory (see, for example,~\cite{BM61,AN84,AKN06}).

Consider a near-identity transformation of system \eqref{FulSys2} in the form \eqref{exch11}. The functions $v_k(E,\varphi)$ are sought in such a way that the drift term of the equation for the new variable $v(t) \equiv  V_N(E(t),\varphi(t),t)$ does not depend on $\varphi$ at least in the first $N\in [2p,4p]$ terms of its asymptotics as $t\to\infty$.  Applying It\^{o}'s formula to $V_N(E,\varphi,t)$ yields 
\begin{gather}\label{dVN}
dv=\mathcal L V_N dt+  \left(\nabla_{\tilde{\bf z}} V_N\right)^T {\bf B} \, d{\bf w}(t), 
\end{gather}
where
\begin{gather*}
\mathcal L V_N \equiv  \partial_t V_N  + \left(\nabla_{\tilde {\bf z}} V_N\right)^T {\bf b}+\frac{1}{2}{\hbox{\rm tr}}\left({\bf B}^T {\bf H}_{\tilde{\bf z}}(V_N){\bf B}\right).
\end{gather*}
Taking into account \eqref{bBas} and \eqref{exch11}, we obtain the following asymptotic expansion:
\begin{equation*}
\begin{split}
\mathcal LV_N=&\sum_{k=1}^\infty t^{-\frac{k}{q}} \Big(f_k+\nu \partial_\varphi v_k-\frac{k-q}{q}v_{k-q}\Big)+\sum_{k=2}^\infty t^{-\frac{k}{q}} \sum_{i+j=k} \left(\nabla_{\tilde {\bf z}} v_{i}\right)^T {\bf b}_{j}\\
 & + \frac{1}{2}\sum_{k=3}^\infty t^{-\frac{k}{q}} \sum_{i+j+m=k} {\hbox{\rm tr}}\left({\bf B}^T_{i} {\bf H}_{\tilde{\bf z}}(v_{j}){\bf B}_{m}\right) 
\end{split}
\end{equation*}
as $t\to\infty$, where it is assumed that $f_k\equiv g_k\equiv0$,  ${\bf B}_{k} \equiv0 $ if $k<1$ and $v_j\equiv 0$ if $j<1$ or $j>N$. 
A comparison of \eqref{dVN} with \eqref{Veq} gives the following chain of differential equations for determining the coefficients $v_k(E,\varphi)$:
\begin{gather}\label{vkeq}
\nu(E)\partial_\varphi v_k=\Lambda_k(E)-f_k(E,\varphi)-R_k(E,\varphi), \quad 1\leq k\leq N,
\end{gather}
where each function $R_k(E,\varphi)$ is explicitly expressed in terms of $v_1(E,\varphi),\dots,v_{k-1}(E,\varphi)$. In particular, $R_1\equiv 0$,
\begin{eqnarray*}
R_2&\equiv & \left(\nabla_{\tilde {\bf z}} v_{1}\right)^T {\bf b}_{1}-v_1\partial_E\Lambda_1 -\frac{2-q}{q}v_{2-q},\\
R_3&\equiv & \sum_{i+j=3}\Big(\left(\nabla_{\tilde {\bf z}} v_{i}\right)^T {\bf b}_{j}-v_{i}\partial_E\Lambda_{j}\Big)-\frac{v_1^2}{2}\partial_E^2 \Lambda_1-\frac{3-q}{q}v_{3-q}+\frac{1}{2}{\hbox{\rm tr}}\left({\bf B}^T_{1} {\bf H}_{\tilde{\bf z}}(v_{1}){\bf B}_{1}\right),\\
R_k&\equiv & \sum_{i+j=k}\left(\nabla_{\tilde {\bf z}} v_{i}\right)^T {\bf b}_{j}-\frac{k-q}{q}v_{k-q} +\frac{1}{2}\sum_{i+j+m=k}{\hbox{\rm tr}}\left({\bf B}^T_{i} {\bf H}_{\tilde{\bf z}}(v_{j}){\bf B}_{m}\right)\\
& &-\sum_{j+m_1+2m_2+\dots+im_i=k} C_{i,j,m} v_1^{m_1}\cdots v_i^{m_i}\partial_E^{m_1+\dots+m_i} \Lambda_j, \quad  k\leq N,
\end{eqnarray*}
where $C_{i,j,m}={\hbox{\rm const}}$. Define
\begin{gather*}
\Lambda_k(E)=\langle f_k(E,\varphi)+R_k(E,\varphi)\rangle_\varphi, \quad \langle C(E,\varphi)\rangle_\varphi:=\frac{1}{2\pi}\int\limits_0^{2\pi} C(E,\varphi)\,d\varphi.
\end{gather*}
Then the right-hand side of \eqref{vkeq} turns out to be $2\pi$-periodic in $\varphi$ with zero average. 
Integrating 
\begin{gather*}
v_k(E,\varphi)=-\frac{1}{\nu(E)}\int\limits_0^\varphi \{f_k(E,\varsigma)+R_k(E,\varsigma)\}_\varsigma\, d\varsigma+\hat v_k(E),
\end{gather*}
Here $\{C(E,\varsigma)\}_\varsigma=C(E,\varsigma)-\langle C(E,\varsigma)\rangle_\varsigma$, and the functions $\hat v_k(E)$ are chosen such that $\langle v_k(E,\varphi)\rangle_\varphi\equiv 0$. 
It can easily be checked that 
\begin{gather}\label{est}
\begin{split}
& R_k(E,\varphi)=\begin{cases}	\mathcal O(E), & k\leq 2p \\ 	\mathcal O(1), & 2p<k\leq N \end{cases},\quad 
\Lambda_k(E)=
\begin{cases}
	\mathcal O(E),& k<2p\\
	\mathcal O(1),& 2p \leq k\leq N
\end{cases},
\\
&  v_k(E,\varphi)=
\begin{cases}
	\mathcal O(E), & k<2p\\
	\mathcal O(1),&  2p \leq k\leq N
\end{cases} 
\end{split}
\end{gather}
as $E\to 0$ uniformly for all $\varphi\in\mathbb R$. In particular, from \eqref{f2p} it follows that $\Lambda_{2p}(E)=\mu_{2p}+\mathcal O(E)$.
 
From \eqref{exch11} it follows that for all $\sigma_0\in (0,E_0)$ and $\epsilon\in (0, \sigma_0)$ there exists $t_\ast \geq  s $ such that
\begin{gather*}
	|V_N(E,\varphi,t)-E|\leq \epsilon, \quad |\partial_E V_N(E,\varphi,t)-1|\leq \epsilon, \quad |\partial_\varphi V_N(E,\varphi,t)|\leq \epsilon 
\end{gather*}
for all $E\in [\sigma_0,E_0]$, $\varphi\in \mathbb R$ and $t\geq t_\ast$. Hence, the transformation $(E,\varphi,t)\mapsto (v,\varphi,t)$ is invertible for all $v\in [\sigma_\ast,v_\ast]$, $\varphi\in [0,2\pi)$ and $t\geq t_\ast$ with $v_\ast=E_0-\epsilon$ and $\sigma_\ast=\sigma_0+\epsilon$. Denote by $E=\mathcal E(v,\varphi,t)$ the inverse transformation to \eqref{exch11}. Then 
\begin{equation*}
	 \tilde \beta_{1,j}(v,\varphi,t)\equiv \left(\nabla_{\tilde{\bf z}} V_N\right)^T {\bf B}\Big|_{E=\mathcal E(v,\varphi,t)},\quad
	 F(v,\varphi,t) \equiv -\sum_{k=1}^{N} t^{-\frac{k}{q}} \Lambda_k(v)+ \mathcal L V_N(E,\varphi,t)\Big|_{E=\mathcal E(v,\varphi,t)}.
\end{equation*}
In this case, the equation for $\varphi$ does not change significantly under the transformation \eqref{exch11}:
\begin{eqnarray*} 
	G(v,\varphi,t)\equiv \nu(\mathcal E(v,\varphi,t))-\nu(v)+\sum_{k=1}^\infty t^{-\frac{k}{q}} g_k(\mathcal E(v,\varphi,t),\varphi), \quad
	\tilde \beta_{2,j}(v,\varphi,t)\equiv \beta_{2,j}(\mathcal E(v,\varphi,t),\varphi,t).
\end{eqnarray*}
It follows easily that
\begin{align*}
&F(v,\varphi,t)=\mathcal O (t^{-\frac{N+1}{q}} ), \quad 
G(v,\varphi,t)=\mathcal O(t^{-\frac{1}{q}})+\mathcal O(v^{-1})\mathcal O(t^{-\frac{2p}{q}}), \quad 
 \tilde \beta_{i,j}(v,\varphi,t)=\mathcal O(v^{-\frac{1}{2}})\mathcal O(t^{-\frac{1}{q}})
\end{align*}
as $t\to\infty$ and $v\to 0$ uniformly for all $\varphi\in\mathbb R$.

\section{Proof of Theorem~\ref{Th2}}
\label{Sec4}

\begin{proof}[Proof of Lemma~\ref{Lem1}]
Substituting $u(t)=t^{-\vartheta_0}(\xi_0+ \xi(t))$ into equation \eqref{ueq1} yields
\begin{gather}\label{xieq1}
	\frac{d\xi}{dt}=\tilde \Lambda(\xi_0+ \xi,t), \quad \tilde \Lambda(\zeta,t)\equiv t^{\vartheta_0}\Lambda\left(t^{-\vartheta_0}  \zeta,t\right)+\vartheta_0 t^{-1} \zeta .
\end{gather}
We see that $\tilde \Lambda(\xi_0+ \xi,t)=t^{-\frac{n}{q}}\left((\lambda_{n}+\delta_{n,q}\vartheta_0)\xi+ \mathcal O(t^{-\kappa_0})\right)$ as $t\to \infty$ uniformly for all $|\xi|<\infty$. Hence, there exist $C_0>0$, $s_0\geq t_\ast$ and $\Delta_0>0$ such that 
\begin{gather}\label{xiineqlem1}
\frac{d|\xi|}{dt}\leq t^{-\frac{n}{q}} \left(-\gamma_0 |\xi|+C_0 t^{-\kappa_0}\right)
\end{gather}
for all $t\geq s_0$ and $|\xi|\leq \Delta_0$, where $\gamma_0=|\lambda_n+\delta_{n,q}\vartheta_0|>0$. 
For all $\varepsilon\in (0,\Delta_0)$ define
\begin{gather*}
\delta_0=\frac{2 C_0 s_0^{-\kappa_0}}{ \gamma_0}<\varepsilon, \quad t_0=\max\left\{s_0, \left(\frac{4 C_0}{ \varepsilon \gamma_0 }\right)^{\frac{1}{\kappa_0}}\right\}.
\end{gather*}
Then $d|\xi(t)|/dt<0$ for solutions of \eqref{zeq1} such that $\delta_0 \leq |\xi(t)|\leq \varepsilon$ as $t\geq t_0$. Hence, any solution of \eqref{xieq1} with initial data $|\xi(t_0)|\leq \delta_0$ cannot leave the domain $|\xi|\leq \varepsilon$ as $t\geq t_0$.

Integrating this inequality with respect to $t$ as $t\geq t_0$ with some $t_0\geq s_0$, we obtain
\begin{gather*}
  |\xi(t)|\leq |\xi(t_0)| e^{-\gamma_0(\tau(t)-\tau(t_0))}+C_0 \int\limits_{t_0}^t e^{\gamma_0(\tau(\varsigma)-\tau(t))} \tau'(\varsigma) \varsigma^{-\kappa_0}\, d \varsigma, \quad \tau'(t)=t^{-\frac{n}{q}}.
\end{gather*}
It can easily be checked that if $n< q$, then $\gamma_0=|\lambda_n|$ and $\xi(t)=\mathcal O(1)$ as $t\to\infty$.
If $n=q$, then $\gamma_0=|\lambda_n+\vartheta_0|$ and we have the following inequality:
\begin{gather*}
|\xi(t)|\leq \begin{cases}
\frac{C_0 }{\gamma_0-\kappa_0}t^{-\kappa_0} +\left (|\xi(t_0)|-\frac{C_0 }{\gamma_0-\kappa_0}t_0^{-\kappa_0}\right) \left(\frac{t}{t_0}\right)^{-\gamma_0}, & \gamma_0\neq \kappa_0 \\
 C_0  t^{-\kappa_0} \log t+\left (|\xi(t_0)|   - C_0 t_0^{ -\kappa_0}\log t_0  \right) \left(\frac{t}{t_0}\right)^{-\kappa_0}, & \gamma_0= \kappa_0
\end{cases}.
\end{gather*}
In this case, $\xi(t)=\mathcal O(t^{-(1-\epsilon)\varkappa_0})$ as $t\to\infty$ for all $\epsilon\in (0,1)$.
\end{proof}

Consider now an axillary system of ordinary differential equations
\begin{gather}\label{upeq1}
		 \frac{du}{dt}=\Lambda(u,t)+F(u,\psi,t),\quad
		 \frac{d\psi}{dt}=\nu(u)+G(u,\psi,t), \quad t\geq t_\ast>0,
\end{gather}
which is obtained from system \eqref{Veq} by dropping the stochastic part. Let us show that the solution $u_0(t)$ of system \eqref{ueq1} is stable with respect to the perturbation $F(u,\psi,t)$. 

\begin{Lem}
Let $(n,p,q)\in\Sigma_1$ and assumption \eqref{ass21} hold with $\lambda_{n}+\delta_{n,q}\vartheta_0<0$. Then for all $\varepsilon>0$ there exist $\delta_0>0$ and $t_0>0$ such that any solution $(u(t),\psi(t))$ of system \eqref{upeq1} with initial data $|u(t_0)-u_0(t_0)|<\delta_0$, $\psi(t_0)\in[0,2\pi)$ satisfies $|u(t)-u_0(t)|<\varepsilon$ as $t\geq t_0$. Moreover, if $n=q$, then
\begin{gather}\label{asup}
	u(t)=t^{-\vartheta_0}  \left(\xi_0+\mathcal O(t^{-(1-\epsilon)\varkappa_0})\right), \quad 
	\frac{\psi(t)}{t}=1+\mathcal O\left(t^{-\varkappa_0}\right)+\mathcal O(t^{-1}\log t), \quad t\to\infty
\end{gather}
for all $\epsilon\in (0,1)$.
\end{Lem}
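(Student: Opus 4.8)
The plan is to treat system \eqref{upeq1} as a perturbation of the reduced equation \eqref{ueq1} whose particular solution $u_0(t)=t^{-\vartheta_0}\zeta_0(t)$ is provided by Lemma~\ref{Lem1}, and to show that the deterministic forcing $F$ is too weak to destroy the contraction already established in the proof of that lemma. First I would pass to the scaled variable $\zeta(t)=t^{\vartheta_0}u(t)$, under which the first equation of \eqref{upeq1} becomes
\begin{gather*}
\frac{d\zeta}{dt}=\tilde\Lambda(\zeta,t)+t^{\vartheta_0}F\big(t^{-\vartheta_0}\zeta,\psi,t\big),
\end{gather*}
with $\tilde\Lambda$ as in \eqref{xieq1}. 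Since $\zeta_0$ solves $d\zeta_0/dt=\tilde\Lambda(\zeta_0,t)$, I would set $\eta=\zeta-\zeta_0=t^{\vartheta_0}(u-u_0)$ and subtract. Using the expansion of $\tilde\Lambda$ from the proof of Lemma~\ref{Lem1} together with $\partial_\zeta\tilde\Lambda(\zeta,t)=t^{-n/q}(\lambda_n+\delta_{n,q}\vartheta_0+\mathcal O(t^{-\kappa_0}))$ and $\partial_\zeta^2\tilde\Lambda(\zeta,t)=t^{-n/q}\mathcal O(t^{-\vartheta_0})$ uniformly on bounded $\zeta$-sets (both following from $v=t^{-\vartheta_0}\zeta\to0$ and assumption \eqref{ass21}, regardless of whether $\zeta_0$ converges or is merely bounded), the deviation satisfies
\begin{gather*}
\frac{d\eta}{dt}=t^{-\frac nq}\Big[(\lambda_n+\delta_{n,q}\vartheta_0)\eta+\mathcal O(t^{-\kappa_0})\eta+\mathcal O(t^{-\vartheta_0})\eta^2\Big]+t^{\vartheta_0}F.
\end{gather*}

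The key observation is that the forcing is negligible at the relevant order. By Theorem~\ref{Th1} and the refined $v\to0$ estimates at the end of Section~\ref{Sec3}, $F=\mathcal O(t^{-(N+1)/q})$ uniformly in the shrinking region $u\sim t^{-\vartheta_0}$, and since $N\geq 2p$ and $\vartheta_0=(2p-n)/q$ we get $t^{\vartheta_0}F=t^{-n/q}\mathcal O(t^{-1/q})$; because $\kappa_0\leq q^{-1}$ this is bounded by $t^{-n/q}Ct^{-\kappa_0}$. Invoking $\lambda_n+\delta_{n,q}\vartheta_0<0$, I would fix a confinement level $\Delta>0$ and $s_0\geq t_\ast$ so large that the two correction terms above are each dominated by $\tfrac14\gamma_0|\eta|$ for $|\eta|\leq\Delta$ and $t\geq s_0$, where $\gamma_0=|\lambda_n+\delta_{n,q}\vartheta_0|$. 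This yields the differential inequality
\begin{gather*}
\frac{d|\eta|}{dt}\leq t^{-\frac nq}\Big(-\frac{\gamma_0}{2}|\eta|+Ct^{-\kappa_0}\Big),
\end{gather*}
which is exactly of the form \eqref{xiineqlem1}. Repeating the confinement argument of Lemma~\ref{Lem1} (choosing thresholds from $\varepsilon$ so that $d|\eta|/dt<0$ on the corresponding annulus) keeps $|\eta(t)|\leq\Delta$, and, when $n=q$, the same integration gives $\eta(t)=\mathcal O(t^{-(1-\epsilon)\varkappa_0})$ for every $\epsilon\in(0,1)$.

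It remains to translate back and to handle $\psi$. Since $|u(t)-u_0(t)|=t^{-\vartheta_0}|\eta(t)|\leq t^{-\vartheta_0}\Delta$ with $t^{-\vartheta_0}$ decreasing, given $\varepsilon$ I would choose $t_0\geq s_0$ so large that $t_0^{-\vartheta_0}\Delta<\varepsilon$ and set $\delta_0=\Delta t_0^{-\vartheta_0}$; then $|u(t_0)-u_0(t_0)|<\delta_0$ forces $|\eta(t_0)|<\Delta$ and hence $|u(t)-u_0(t)|<\varepsilon$ for all $t\geq t_0$, which is the stability claim. For the refined asymptotics when $n=q$, combining the bound on $\eta$ with $\zeta_0(t)=\xi_0+\mathcal O(t^{-(1-\epsilon)\varkappa_0})$ from \eqref{uastas} yields $u(t)=t^{-\vartheta_0}(\xi_0+\mathcal O(t^{-(1-\epsilon)\varkappa_0}))$. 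Finally, along $u\sim t^{-\vartheta_0}$ one has $\nu(u)=1+\mathcal O(t^{-\vartheta_0})$ and, by the refined bound for $G$, $G=\mathcal O(t^{-1/q})+\mathcal O(u^{-1})\mathcal O(t^{-2p/q})=\mathcal O(t^{-\kappa_0})$; integrating the second equation of \eqref{upeq1} gives $\psi(t)=t+\mathcal O(t^{1-\kappa_0})+\mathcal O(\log t)$, i.e. $\psi(t)/t=1+\mathcal O(t^{-\varkappa_0})+\mathcal O(t^{-1}\log t)$, as in \eqref{asup} (the logarithm covering the borderline case $\kappa_0=1$). The main obstacle is a matter of care rather than of idea: one must ensure that the estimates for $F$ and $G$ hold uniformly in the collapsing region $u\sim t^{-\vartheta_0}$ and not merely on a fixed interval bounded away from the origin, which is precisely what the $v\to0$ refinements of Section~\ref{Sec3} supply.
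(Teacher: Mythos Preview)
Your proposal is correct and follows essentially the same route as the paper: the substitution $\eta=t^{\vartheta_0}(u-u_0)$, the expansion of the drift giving the linear term $t^{-n/q}(\lambda_n+\delta_{n,q}\vartheta_0)\eta$ plus lower-order corrections, the bound $t^{\vartheta_0}F=\mathcal O(t^{-(n+N-2p+1)/q})$ absorbed into the $C t^{-\kappa_0}$ forcing, and then the reuse of the differential inequality and confinement argument from Lemma~\ref{Lem1}. Your treatment of $\psi$ via $\nu(u)=1+\mathcal O(t^{-\vartheta_0})$ and $G=\mathcal O(t^{-1/q})+\mathcal O(t^{-n/q})$ also matches the paper; the only cosmetic difference is that you pass through $\zeta=t^{\vartheta_0}u$ first and phrase the final stability step by shrinking $t_0^{-\vartheta_0}\Delta$ rather than by shrinking the $\eta$-threshold directly, which is equivalent.
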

\begin{proof}
Substituting 
\begin{gather}\label{subs1}
u(t)=u_0(t)+t^{-\vartheta_0}\eta(t)
\end{gather}
into the first equation of \eqref{upeq1} yields
\begin{gather}\label{zeq1}
	\frac{d\eta}{dt}= \mathcal Z(\eta,t)+\mathcal F(\eta,\psi,t),
\end{gather}
where 
\begin{gather}\label{ZF}
\begin{split}
&\mathcal Z(\eta,t) \equiv t^{\vartheta_0} \left(\Lambda(u_0(t)+t^{-\vartheta_0}\eta,t)-\Lambda(u_0(t),t)\right)+\vartheta_0 t^{-1}\eta, \\ 
&\mathcal F(\eta,\psi,t)  \equiv  t^{\vartheta_0} F(u_0(t)+t^{-\vartheta_0}\eta,\psi,t).
\end{split}
\end{gather}
Note that 
\begin{gather}\label{ZFest}
	\mathcal Z(\eta,t)=t^{-\frac{n}{q}} \eta \left(\lambda_{n}+\delta_{n,q} \vartheta_0+\mathcal O(t^{-\kappa_0})\right)+\mathcal O(t^{-\frac{2p}{q}})\mathcal O(\eta^2), \quad \mathcal F(\eta,\psi,t)=\mathcal O\left(t^{-\frac{n+N-2p+1}{q}}\right)
\end{gather}
as $\eta \to 0$ and $t\to \infty$, uniformly for all $\psi\in\mathbb R$. Hence, there exist $C_0>0$, $s_0\geq t_\ast$ and $\Delta_0>0$ such that ${d|\eta|}/{dt}\leq  t^{-{n}/{q}}\left(- \gamma_0 |\eta|  +C_0  t^{-\kappa_0}\right)
$
for all $t\geq s_0$, $|\eta|\leq \Delta_0$ and $\psi\in\mathbb R$. 
Repeating the argument used in the previous lemma, it can be seen that for all $\varepsilon\in (0,\Delta_0)$ there exist $\delta_0\in (0,\varepsilon)$ and $t_0\geq s$ such that any solution of \eqref{zeq1} with initial data $|\eta(t_0)|\leq \delta_0$ cannot leave the domain $|\eta|\leq \varepsilon$ as $t\geq t_0$ for all $\psi(t)\in\mathbb R$. Combining this with \eqref{subs1}, we obtain the stability of the solution $u_0(t)$ with respect to the perturbation $F(u,\psi,t)$. Moreover, as in the proof of the previous lemma, we can show that if $n=q$, then the solution $\eta(t)$ of equation \eqref{zeq1} with initial data $|\eta(t_0)|\leq \delta_0$ has the asymptotics $\eta(t)=\mathcal O(t^{-(1-\epsilon)\varkappa_0})$ as $t\to\infty$ for all $\epsilon\in (0,1)$ and for all $\psi(t)\in\mathbb R$. Taking into account \eqref{subs1}, we obtain asymptotic estimate \eqref{asup} for $u(t)$ as $t\to\infty$. In this case, 
\begin{gather*}
\nu(u(t))=1+\mathcal O(t^{-\vartheta_0}), \quad 
G(u(t),\psi,t)=\mathcal O(t^{-\frac{1}{q}})+\mathcal O(t^{-\frac{n}{q}}), \quad t\to\infty  
\end{gather*}
uniformly for all $\psi\in\mathbb R$. Combining this with the second equation of system \eqref{upeq1}, we get the asymptotic behaviour of $\psi(t)/t$ as $t\to\infty$. 
\end{proof}

Thus, the assumption \eqref{ass21} with  $\lambda_{n}+\delta_{n,q}\vartheta_0<0$ guarantees that $u(t)\to 0$ as $t\to\infty$ for solutions of the reduced system \eqref{upeq1}. Let us show that the full stochastic system \eqref{Veq} has a similar property. 

\begin{proof}[Proof of Theorem~\ref{Th2}]
Fix the parameters $\varepsilon_1>0$, $\varepsilon_2>0$, and consider the auxiliary function 
\begin{gather*}
	\Theta_0(x,y,t)\equiv t^{\vartheta_0} \left(V_N (I(x,y),\Phi(x,y),t)-u_0(t)\right)
\end{gather*}
with $N=2p+q-n$, where $u_0(t)$ is the solution of \eqref{ueq1} with asymptotics \eqref{uastas}. From \eqref{exch11} and \eqref{est}  it follows that there exists $M_1>0$ such that
\begin{gather}\label{axest}
|\rho ({\bf z},t;\zeta_0(t),\vartheta_0)|-M_1  t^{-\frac{1}{q}} \leq |\Theta_0(x,y,t)|\leq |\rho ({\bf z},t;\zeta_0(t),\vartheta_0)|+M_1  t^{-\frac{1}{q}}
\end{gather}
for all $ \mathfrak D_0(d_0,s_0)$, where
\begin{gather*}
	\mathfrak D_0(d_0,s_0)=\{(x,y,t)\in \mathcal D(0,E_0)\times\{t\geq s_0\}: \ \  |\rho({\bf z},t;\zeta_0(t),\vartheta_0)|\leq d_0\}
\end{gather*}
with some $d_0>0$ and $s_0\geq \max \{t_\ast, ((d_0+\max_{t\geq t_\ast} \zeta_0(t))/E_0)^{1/\vartheta_0}\}$. It can easily be checked that 
\begin{align*}
&L\Theta_0(x,y,t)\equiv  \vartheta_0 t^{-1} \Theta_0(x,y,t)-t^{\vartheta_0} u_0'(t)+t^{\vartheta_0} L V_N (I(x,y),\Phi(x,y),t), \\ 
&LV_N (I(x,y),\Phi(x,y),t) \equiv \mathcal L V_N(E,\varphi,t)\big|_{E=I(x,y),\varphi=\Phi(x,y)}.
\end{align*}
Combining this with \eqref{ZF}, we get
\begin{gather*} 
  L\Theta_0(x,y,t)\equiv \mathcal Z(\Theta_0(x,y,t),t)+\mathcal F(\Theta_0(x,y,t),\Phi(x,y),t).
\end{gather*}
It follows from \eqref{ZFest} that for all $\epsilon\in (0,1)$ there exist $0<d_1\leq d_0$ and $s_1\geq s_0 $ such that
\begin{align*}
	 & {\hbox{\rm sgn}}\left( \Theta_0(x,y,t) \right)\mathcal Z(\Theta_0(x,y,t),t)\leq -t^{-\frac{n}{q}}(1-\epsilon)\gamma_0\left|\Theta_0(x,y,t)\right|, \\
	&	\left|\mathcal F\left(\Theta_0(x,y,t),\Phi(x,y),t\right)\right|\leq M_2 t^{-\frac{n+N-2p+1}{q}}\quad 
\end{align*}
for all $(x,y,t)\in\mathfrak D_0(d_1,s_1)$ with $\gamma_0=|\lambda_n+\delta_{n,q}\vartheta_0|>0$ and $M_2={\hbox{\rm const}}>0$.
Hence,
\begin{gather}\label{Luest}
	L|\Theta_0(x,y,t)|\leq  t^{-\frac{n}{q}}\left(- (1-\epsilon)\gamma_0\left|\Theta_0(x,y,t)\right| + M_2 t^{- \frac{N-2p+1}{q} }\right).
\end{gather} 
 
Consider
\begin{gather*}
	\mathcal V_0(x,y,t)=|\Theta_0(x,y,t)| +(M_1 +  q M_2)  t^{-\frac{1}{q}} 
\end{gather*}
 as a stochastic Lyapunov function candidate for system \eqref{FulSys}. Taking into account \eqref{axest} and \eqref{Luest}, we get
\begin{gather}\label{LU3est}
	\begin{split}
	&|\rho ({\bf z},t;\zeta_0(t),\vartheta_0)|\leq \mathcal V_0(x,y,t)\leq |\rho ({\bf z},t;\zeta_0(t),\vartheta_0)|+ M t^{-\frac{1}{q}},\\
	& L\mathcal V_0(x,y,t)\leq - \left(t^{-\frac{n}{q}} (1-\epsilon)\gamma_0 \left|\Theta_0(x,y,t)\right|+   \frac{M_1}{q} t^{-\frac{1}{q}-1}\right)\leq 0
	\end{split}
\end{gather}
 for all $(x,y,t)\in \mathfrak D_0(d_1,s_1)$ with $M=2M_1+ q M_2>0$.

Let ${\bf z}(t)\equiv (x(t),y(t))^T$ be a solution of system \eqref{FulSys} with initial data ${\bf z}_0=(x(t_0),y(t_0))^T$ such that $|\rho ({\bf z}_0,t_0;\zeta_0(t_0),\vartheta_0)|\leq \delta_0$ and $\tau_{\mathfrak D_0}$ be the first exit time of $({\bf z}(t),t)$ from the domain $ \mathfrak D_0(\varepsilon_1,t_0)$ with some $0<\delta_0<\varepsilon_1\leq d_1$ and $t_0\geq s_1$. Define the function $\theta_t=\min\{\tau_{\mathfrak D_0},t\}$, then $({\bf z}(\theta_t),\theta_t)$ is the process stopped at the first exit time from the domain $ \mathfrak D_0(\varepsilon_1,t_0)$. It follows from \eqref{LU3est} that $\mathcal V_0(x(\theta_t),y(\theta_t),\theta_t)$ is a non-negative supermartingale~\cite[\S 5.2]{RH12}. In this case, the following estimates hold:
\begin{align*}
\mathbb P\left(\sup_{t\geq t_0} |\rho ({\bf z}(t),t;\zeta_0(t),\vartheta_0)| \geq \varepsilon_1\right)
& =	\mathbb P\left(\sup_{t\geq t_0}|\rho({\bf z}(\theta_t),\theta_t;\zeta_0(\theta_t),\vartheta_0)|\geq \varepsilon_1\right)\\
& \leq \mathbb P\left(\sup_{t\geq t_0}\mathcal V_0(x(\theta_t),y(\theta_t),\theta_t)\geq \varepsilon_1\right)\leq \frac{\mathcal V_0(x(t_0),y(t_0),t_0)}{\varepsilon_1}.
\end{align*}
The last estimate follows from Doob's inequality for supermartingales. From \eqref{LU3est} it follows that $\mathcal V_0(x(t_0),y(t_0),t_0)\leq \delta_0+Mt_0^{-1/q}$. Hence, taking 
\begin{gather*}
\delta_0=\frac{\varepsilon_1\varepsilon_2}{2}, \quad  
t_0=\max\left\{s_1,\left(\frac{2M}{\varepsilon_1 \varepsilon_2}\right)^{q}\right\},
\end{gather*} 
we obtain \eqref{defineq2}. 
\end{proof}

\section{Proof of Theorem~\ref{Th3}}
\label{Sec5}

\begin{proof}[Proof of Lemma~\ref{Lem3}]
Substituting $u(t)=t^{-\vartheta_j}(\xi_j+\xi(t))$ into equation \eqref{ueq1} yields
\begin{gather*}
	\frac{d\xi}{dt}=\tilde \Lambda_j(\xi+\xi_j,t), \quad \tilde \Lambda_j(\zeta,t)\equiv t^{\vartheta_j}\Lambda\left(t^{-\vartheta_j} \zeta,t\right)+\vartheta_j t^{-1} \zeta,  
\end{gather*}
where $j\in\{1,2,3\}$ and $\vartheta_3=\vartheta_2$. Define the parameters
\begin{gather*}
\chi_1=\frac{n+d}{q},\quad
\chi_2=\frac{2p(m-1)+n}{mq},\quad
\chi_3=\chi_2. 
\end{gather*}

In {\bf Case I}, we take $j=1$. Then 
$	\tilde \Lambda_1(\zeta,t)=t^{-\chi_1} \left(Q_1(\zeta)+\mathcal O(t^{-\kappa_1})\right)$ as $t\to\infty$.
Since $Q_1(\xi_1)=0$ and $Q_1'(\xi_1)=-(m-1)|\lambda_{n+d}+\delta_{n+d,q} \vartheta_1|<0$, we obtain 
\begin{gather*}
\tilde \Lambda_1(\xi+\xi_1,t)= t^{-\chi_1}\left(-|Q_1'(\xi_1)|\xi+\mathcal O(\xi^2)+\mathcal O(t^{-\kappa_1})\right), \quad t\to\infty,  \quad \xi\to 0.
\end{gather*}

In {\bf Case II}, we take $j=2$ and get 
$\tilde \Lambda_2(\zeta,t)=t^{-\chi_2} \left(Q_2(\zeta)+\mathcal O(t^{-\kappa_2})\right)$ as $t\to\infty$.
Since $Q_2(\xi_2)=0$ and $Q_2'(\xi_2)=-|\lambda_{n,m} m \xi_2^{m-1}|<0$, it follows that 
\begin{gather*}
\tilde \Lambda_2(\xi+\xi_2,t)= t^{-\chi_2}\left(-|Q_2'(\xi_2)|\xi+\mathcal O(\xi^2)+\mathcal O(t^{-\kappa_2})\right), \quad t\to\infty,  \quad  \xi\to 0.
\end{gather*}

In {\bf Case III}, we take $j=3$ and obtain 
$
	\tilde \Lambda_3(\zeta,t)=t^{-\chi_3} \left(Q_3(\zeta)+\mathcal O(t^{-\kappa_3})\right)
$
as $t\to\infty$.
It can easily be checked that there exists $\xi_3>0$ such that $Q_3(\xi_3)=0$ and $Q_3'(\xi_3)<0$. Hence,
\begin{gather*}
\tilde \Lambda_3(\xi+\xi_3,t)= t^{-\chi_3}\left(-|Q_3'(\xi_3)|\xi+\mathcal O(\xi^2)+\mathcal O(t^{-\kappa_2})\right), \quad t\to\infty,  \quad  \xi \to 0.
\end{gather*}

Thus, in all three cases, for all $\epsilon\in(0,1)$ there exist $C_0>0$, $s_0\geq t_\ast$ and $\Delta_0>0$ such that 
\begin{gather}\label{xiests}
\frac{d|\xi|}{dt}\leq t^{-\chi_j}\left(-\gamma_j |\xi|+C_0 t^{- \kappa_j}\right)
\end{gather}
for all $t\geq s_0$ and $|\xi|\leq \Delta_0$, where $\gamma_j=(1-\epsilon)|Q_j'(\xi_j)|>0$ and $\kappa_3=\kappa_2$.
Note that in {\bf Case II}, $\chi_2<(n+d)/q\leq 1$. 

For all $\varepsilon\in (0,\Delta_0)$ define
\begin{gather*}
\delta_j=\frac{2 C_0 s_0^{-\kappa_j}}{ \gamma_j}<\varepsilon, \quad t_0=\max\left\{s_0, \left(\frac{4 C_0}{ \varepsilon \gamma_j }\right)^{\frac{1}{\kappa_j}}\right\}.
\end{gather*}
Then $d|\xi(t)|/dt<0$ for solutions of \eqref{zeq1} such that $\delta_j \leq |\xi(t)|\leq \varepsilon$ as $t\geq t_0$. Hence, any solution of \eqref{xieq1} with initial data $|\xi(t_0)|\leq \delta_j$ cannot leave the domain $|\xi|\leq \varepsilon$ as $t\geq t_0$.

By integrating \eqref{xiests} with respect to $t$, we get
\begin{eqnarray*}
  |\xi(t)|&\leq& |\xi(s_0)| \exp\left\{- \gamma_j (\tau_j(t)-\tau_j(s_0))\right\}\\
					& &	+C_0 \int\limits_{s_0}^t 
	\exp\left\{ \gamma_j (\tau_j(\varsigma)-\tau_j(t))\right\} \varsigma^{-\chi_j- \kappa_j}\, d\varsigma, \quad j\in\{1,2,3\},
\end{eqnarray*}
where $(\tau_j(t))'=t^{-\chi_j}$. Using the same type of reasoning as in the proof of Lemma~\ref{Lem1}, we obtain asymptotic estimates \eqref{u1as}, \eqref{u2as}  and \eqref{u3as}.
\end{proof}

\begin{proof}[Proof of Theorem~\ref{Th3}]
Consider the functions 
\begin{gather*}
	\Theta_j(x,y,t)\equiv t^{\vartheta_j} \left(V_N (I(x,y),\Phi(x,y),t)-u_j(t)\right), \quad j\in\{1,2,3\}
\end{gather*}
with $N\geq q(1+\vartheta_j)$, where $\vartheta_3=\vartheta_2$, and $u_1(t)$, $u_2(t)$ and $u_3(t)$ are the solutions of equation \eqref{ueq1} with asymptotics \eqref{u1as}, \eqref{u2as} and \eqref{u3as}, respectively. It can easily be checked that
\begin{eqnarray*}
L\Theta_j(x,y,t)&\equiv&  \vartheta_j t^{-1} \Theta_j(x,y,t)-t^{\vartheta_j} u_j'(t)+t^{\vartheta_j} L V_N (I(x,y),\Phi(x,y),t),\\
LV_N (I(x,y),\Phi(x,y),t) & \equiv &\mathcal L V_N(E,\varphi,t)\big|_{E=I(x,y),\varphi=\Phi(x,y)}\\
&\equiv& \mathcal Z_j(\Theta_j(x,y,t),t)+\mathcal F_j(\Theta_j(x,y,t),\Phi(x,y),t),
\end{eqnarray*}
where
\begin{align*} 
&\mathcal Z_j(\eta,t) \equiv t^{\vartheta_j} \left(\Lambda(u_j(t)+t^{-\vartheta_j}\eta,t)-\Lambda(u_j(t),t)\right)+\vartheta_j t^{-1}\eta =t^{-\chi_j} \eta \left(Q_j'(\xi_j)+\mathcal O(t^{-\varkappa_j})+ \mathcal O(\eta)\right),\\
& \mathcal F_j(\eta,\psi,t)  \equiv  t^{\vartheta_j} F(u_j(t)+t^{-\vartheta_j}\eta,\psi,t)=\mathcal O\left(t^{\vartheta_j-\frac{N+1}{q}}\right)
\end{align*}
as $\eta \to 0$ and $t\to \infty$ uniformly for all $\psi\in\mathbb R$. Note that $\vartheta_j-(N+1)/q\leq -1 -1/q$.
From \eqref{exch11} and \eqref{est} it follows that there exists $M_1>0$ such that 
\begin{gather}\label{axestj}
|\rho({\bf z},t;\zeta_j(t),\vartheta_j)|-M_1  t^{-\frac{1}{q}} \leq |\Theta_j(x,y,t)|\leq |\rho ({\bf z},t;\zeta_j(t),\vartheta_j)|+M_1  t^{-\frac{1}{q}}
\end{gather}
for all $(x,y,t)\in \mathfrak D_j(d_0,s_0)=\{(x,y,t)\in \mathcal D(0,E_0)\times\{t\geq s_0\}: |\rho({\bf z},t;\zeta_j(t),\vartheta_j)|\leq d_0\}$ with some $d_0>0$ and $s_0\geq \max \{t_\ast, ((\max_{t\geq t_\ast}\zeta_j(t)+d_0)/E_0)^{1/\vartheta_j}\}$. 
Moreover, for all $\epsilon\in (0,1)$ there exist $0<d_1\leq d_0$ and $s_1\geq s_0 $ such that
\begin{gather}\begin{split}\label{estj}
	 & {\hbox{\rm sgn}}\left( \Theta_j(x,y,t) \right)\mathcal Z_j(\Theta_j(x,y,t),t)\leq -t^{-\chi_j}(1-\epsilon)|Q'_j(\xi_j)|\left|\Theta_j(x,y,t)\right|, \\
	&	\left|\mathcal F_j\left(\Theta_j(x,y,t),\Phi(x,y),t\right)\right|\leq M_2 t^{-1-\frac{1}{q}}\quad 
\end{split}
\end{gather}
for all $(x,y,t)\in\mathfrak D_j(d_1,s_1)$ with $M_2={\hbox{\rm const}}>0$.

In each of three cases, the Lyapunov function candidate for system \eqref{FulSys} can be constructed in the following form:
\begin{gather*}
	\mathcal V_j(x,y,t)=|\Theta_j(x,y,t)| +(M_1+  q M_2)  t^{-\frac{1}{q}} 
\end{gather*}
with the corresponding value of $j\in \{1,2,3\}$. From  \eqref{axestj} and \eqref{estj} it follows that
\begin{gather*}
	\begin{split}
	&|\rho ({\bf z},t;\zeta_j(t),\vartheta_j)|\leq \mathcal V_j(x,y,t)\leq |\rho ({\bf z},t;\zeta_j(t),\vartheta_j)|+ M t^{-\frac{1}{q}},\\
	& L\mathcal V_j(x,y,t)\leq - \left(t^{-\chi_j} (1-\epsilon)|Q_j'(\xi_j)|\left|\Theta_j(x,y,t)\right|+ \frac{M_1}{q} t^{-\frac{1}{q}-1}\right)\leq 0
	\end{split}
\end{gather*}
 for all $(x,y,t)\in \mathfrak D_j(d_1,s_1)$ with $M=2M_1+ q M_2>0$. Note that the last estimates are similar to \eqref{LU3est}. Hence, the rest of the proof is the same as that of Theorem~\ref{Th2} with $\mathcal V_j(x,y,t)$ and $\rho ({\bf z},t;\zeta_j(t),\vartheta_j)$ instead of $\mathcal V_0(x,y,t)$ and $\rho ({\bf z},t;\zeta_0(t),\vartheta_0)$, respectively.
\end{proof}

\section{Proof of Theorem~\ref{Th4}}
\label{Sec6}

\begin{proof}[Proof of Lemma~\ref{Lem4}] Substituting $u(t)= \xi_\ast+ \xi(t) $ into equation \eqref{ueq1} yields
\begin{gather*}
	\frac{d\xi}{dt}=\Lambda(\xi_\ast+ \xi,t)=t^{-\frac{n}{q}}\left(\Lambda_n'(\xi_\ast)\xi+\mathcal O(\xi^2)+ \mathcal O(t^{-\frac{1}{q}})\right)
\end{gather*}
as $t\to \infty$ and $\xi\to 0$. Hence, for all $\epsilon\in (0,1)$ there exist $C_0>0$, $s_0\geq t_\ast$ and $\Delta_0>0$ such that 
\begin{gather*}
\frac{d|\xi|}{dt}\leq  t^{-\frac{n}{q}} \left(-\gamma_\ast |\xi|+C_0 t^{-\frac{1}{q}}\right)
\end{gather*}
for all $t\geq s_0$ and $|\xi|\leq \Delta_0$, where $\gamma_\ast=(1-\epsilon)|\Lambda_{n}'(\xi_\ast)|>0$. This inequality is similar to \eqref{xiineqlem1}. Therefore, using the same type of reasoning as in the proof of Lemma~\ref{Lem1} with $1/q$, $\gamma_\ast$ and $\varkappa_\ast$ instead of $\kappa_0$, $\gamma_0$ and $\varkappa_0$, respectively, we obtain that for all $\varepsilon>0$ there exist $\delta_0>0$ and $t_0\geq s_0$ such that any solution with initial data $|\xi(t_0)|\leq \delta_0$ cannot leave the domain $|\xi|\leq \varepsilon$ as $t\geq t_0$. Similarly, we get asymptotic estimate \eqref{uastasc}.
\end{proof}

\begin{proof}[Proof of Theorem~\ref{Th4}]
Let $u_\ast(t)$ be the solution of \eqref{ueq1} with asymptotics \eqref{uastasc}. Define
\begin{gather*}
	\Theta_\ast(x,y,t)\equiv  \left(V_N (I(x,y),\Phi(x,y),t)-u_\ast(t)\right)^2
\end{gather*}
with $N\geq q$. It follows from \eqref{exch11} and \eqref{est} that there exists $M_1>0$ such that
\begin{gather}\label{axestc}
|\Theta_\ast(x,y,t)-\rho^2 ({\bf z},t;u_\ast(t),0)|\leq M_1  t^{-\frac{1}{q}}
\end{gather}
for all $(x,y,t)\in \mathfrak D_\ast(d_0,s_0)=\{(x,y,t)\in \mathcal D(0,E_0)\times\{t\geq s_0\}: |\rho({\bf z},t;u_\ast(t),0)|\leq d_0\}$ with some $d_0>0$ and $s_0\geq t_\ast$. 
Note that 
\begin{align*}
 L\Theta_\ast(x,y,t)&\equiv {\hbox{\rm tr}}({\bf A}^T {\bf M} {\bf A})+
 2\left(V_N (I(x,y),\Phi(x,y),t)-u_\ast(t)\right)\left(\mathcal L V_N(E,\varphi,t)|_{E=I(x,y),\varphi=\Phi(x,y)}-u_\ast'(t)\right)\\
&  ={\hbox{\rm tr}}({\bf A}^T {\bf M} {\bf A})+ 2 \Theta_\ast(x,y,t) t^{-\frac{2p}{q}}  \left[\Lambda_{2p}'(u_\ast(t))+\mathcal O(t^{-\frac{1}{q}})+ \mathcal O (\rho  )\right]+\mathcal O\left(t^{-\frac{N+1}{q}}\right)
\end{align*}
as $\rho=\rho ({\bf z},t;u_\ast(t),0)\to 0$ and $t\to \infty$, where
\begin{gather*}
 {\bf M}({\bf z},t)  \equiv  \left.
\begin{pmatrix}
(\partial_xV_N)^2 &  \partial_xV_N \partial_yV_N \\
 \partial_xV_N \partial_yV_N  & (\partial_yV_N)^2
\end{pmatrix}\right|_{E=I(x,y), \varphi=\Phi(x,y)}.
\end{gather*}
It follows from \eqref{H0as}, \eqref{asslimc} and \eqref{est} that there exists $M_0>0$ such that
\begin{gather*}
\big|{\hbox{\rm tr}} ({\bf A}^T {\bf M} {\bf A})\big |\leq  \mu M_0 t^{-\frac{2p}{q}}
\end{gather*}
for all $(x,y,t)\in \mathfrak D_\ast(d_0,s_0)$. From the proof of Lemma~\ref{Lem4} it follows that for all $\epsilon>0$ there exists $s_1\geq s_0$ such that $\Lambda_{2p}'(u_\ast(t))\leq -\gamma_\ast$ as $t\geq s_1$ with $\gamma_\ast=(1-\epsilon)|\Lambda'_{2p}(\xi_\ast)|>0$. Therefore, there exist $0<d_1\leq d_0$ and $s_2\geq s_1 $ such that 
\begin{gather}\label{estc}
 L\Theta_\ast(x,y,t)
	\leq   t^{-\frac{2p}{q}}\left(-\gamma_\ast  \Theta_\ast(x,y,t) + \mu M_0 \right)+M_2 t^{-\frac{N+1}{q}}
\end{gather}
for all $(x,y,t)\in\mathfrak D_\ast(d_1,s_2)$ and with some $M_2={\hbox{\rm const}}>0$.

We take $N=q$ and consider the Lyapunov function candidate for system \eqref{FulSys} in the following form:
\begin{gather*}
	\mathcal V_\ast(x,y,t)= \Theta_\ast(x,y,t)+\mu M_0\zeta(t)+(M_1+ q M_2)  t^{-\frac{1}{q}}
\end{gather*}
with 
\begin{gather*}
\zeta(t)\equiv 
\begin{cases}
\displaystyle t_0^{-\frac{2p}{q}} (\mathcal T+t_0-t), & 2p<q,\\
\displaystyle \log\left(\frac{\mathcal T+t_0}{t}\right),& 2p=q,\\
\displaystyle \int\limits_t^{t_0+\mathcal T} \sigma^{-\frac{2p}{q}}\,d\sigma, & 2p>q
\end{cases}
\end{gather*}
and some $t_0\geq s_2$.
From  \eqref{axestc} and \eqref{estc} it follows that
\begin{gather}\label{Vastest}
	  \mathcal V_\ast(x,y,t)\geq \rho^2 ({\bf z},t;u_\ast(t),0),\quad
	  L\mathcal V_\ast(x,y,t)\leq -t^{-\frac{2p}{q}}\gamma_\ast \Theta_\ast(x,y,t)  -\frac{M_1}{q}t^{-1-\frac{1}{q}} \leq 0
\end{gather}
 for all $(x,y,t)\in \mathfrak B(d_1,t_0,\mathcal T)=\{(x,y,t)\in \mathfrak D_\ast(d_1,t_0), t_0\leq t\leq t_0+\mathcal T\}$. 

Fix the parameters $\varepsilon_1>0$ and $\varepsilon_2>0$. Let ${\bf z}(t)\equiv (x(t),y(t))^T$ be a solution of system \eqref{FulSys} with initial data ${\bf z}_0=(x(t_0),y(t_0))^T$ such that $|\rho ({\bf z}_0,t_0;u_\ast(t_0),0)|\leq \delta_0$ and $\tau_{\mathfrak B}$ be the first exit time of $({\bf z}(t),t)$ from the domain $ \mathfrak B(\varepsilon_1,t_0,\mathcal T)$ with some $0<\delta_0<\varepsilon_1\leq d_1$. Define the function $\theta_t=\min\{\tau_{\mathfrak B},t\}$, then $({\bf z}(\theta_t),\theta_t)$ is the process stopped at the first exit time from the domain $ \mathfrak B(\varepsilon_1,t_0,\mathcal T)$. It follows from \eqref{Vastest} that $\mathcal V_\ast(x(\theta_t),y(\theta_t),\theta_t)$ is a non-negative supermartingale, and the following estimates hold:
\begin{align*}
\mathbb P\left(\sup_{t_0\leq t\leq t_0+\mathcal T} |\rho ({\bf z}(t),t;u_\ast(t),0) | \geq \varepsilon_1\right)
& =	\mathbb P\left(\sup_{t\geq t_0}\rho^2({\bf z}(\theta_t),\theta_t;u_\ast(\theta_t),0)\geq \varepsilon_1^2\right)\\
& \leq \mathbb P\left(\sup_{t\geq t_0}\mathcal V_\ast(x(\theta_t),y(\theta_t),\theta_t)\geq \varepsilon_1^2\right)\leq \frac{\mathcal V_\ast(x(t_0),y(t_0),t_0)}{\varepsilon_1^2}.
\end{align*}
Since $\mathcal V_\ast(x(t_0),y(t_0),t_0)\leq \delta_0^2+Mt_0^{-1/q}+\mu M_0 \zeta(t_0)$  with $M=2M_1+ q M_2>0$. Hence, taking  $\delta_0={\varepsilon_1^2\varepsilon_2}/{3}$ and
\begin{gather*}
\begin{cases}
\mathcal T=\frac{\delta_0^{2}}{ \mu M_0}  t_0^{\frac{2p}{q}}, \quad t_0=\max\left\{s_2,\left(\frac{M}{\delta_0^{2}}\right)^2\right\},& 2p<q,\\
\mathcal T=t_0\left(e^{\frac{\delta_0^{2}}{\mu M_0}}-1\right), \quad t_0=\max\left\{s_2,\left(\frac{M}{\delta_0^{2}}\right)^2\right\}, & 2p=q,\\
\mathcal T=\infty, \quad t_0=\max\left\{s_2,\left(\frac{M}{\delta_0^{2}}\right)^2, \left(\frac{\mu M_0 q}{(2p-q)\delta_0^2}\right)^{\frac{q}{2p-q}}\right\}, & 2p>q,
\end{cases}
\end{gather*} 
we obtain \eqref{defineq6}.
\end{proof}

\section{Proof of Theorem~\ref{Th5}} \label{Sec7}
\begin{proof}[Proof of Lemma~\ref{Lem5}] Note that $1-n/q\leq -1/q$ for $(n,p,q)\in\Sigma_3$. By substituting 
\begin{gather*}
u(t)= \xi_\ast+ t^{1-\frac{n}{q}}\frac{q\Lambda_n(\xi_\ast)}{q-n} +\xi(t)
\end{gather*}
into equation \eqref{ueq1}, we obtain
\begin{gather*}
	\frac{d\xi}{dt}=\Lambda\left(\xi_\ast+t^{1-\frac{n}{q}}\frac{q\Lambda_n(\xi_\ast)}{q-n} +\xi,t\right)-t^{-\frac{n}{q}} \Lambda_n(\xi_\ast) =t^{-\frac{n}{q}}\left(\Lambda_{n}'(\xi_\ast)\xi+\mathcal O(\xi^2)+ \mathcal O(t^{-\frac{1}{q}})\right)
\end{gather*}
as $t\to \infty$ and $\xi\to 0$. Hence, for all $\epsilon\in (0,1)$ there exist $C_0>0$, $s_0\geq t_\ast$ and $\Delta_0>0$ such that 
\begin{gather*}
\frac{d|\xi|}{dt}\leq  t^{-\frac{n}{q}} \left(-\gamma_\ast |\xi|+C_0 t^{-\frac{1}{q}}\right)
\end{gather*}
for all $t\geq s_0$ and $|\xi|\leq \Delta_0$ with $\gamma_\ast=(1-\epsilon)|\Lambda_{n}'(\xi_\ast)|>0$. Thus,  as in the proof of Lemma~\ref{Lem1}, we see that there exist $t_0\geq s_0$ and $|\xi(t_0)|\leq \Delta_0$ such that $\xi(t)=\mathcal O(t^{-1/q})$ as $ t\to\infty$.
\end{proof}

\begin{proof}[Proof of Theorem~\ref{Th5}]
Consider the auxiliary function
\begin{gather*}
	\Theta_\ast(x,y,t)\equiv  \left(V_N (I(x,y),\Phi(x,y),t)-v_\ast(t)\right)^2
\end{gather*}
with $N\geq n>q$, where $v_\ast(t)$ is the solution of \eqref{ueq1} with asymptotics \eqref{uasast}. From \eqref{exch11} and \eqref{est} it follows that there exist $M_1>0$ such that 
\begin{gather}\label{axestast}
|\Theta_\ast(x,y,t)-\rho^2 ({\bf z},t;v_\ast(t),0)|\leq M_1  t^{-\frac{1}{q}}
\end{gather}
for all $(x,y,t)\in \mathfrak D_\ast(d_0,s_0)=\{(x,y,t)\in \mathcal D(0,E_0)\times\{t\geq s_0\}: |\rho({\bf z},t;v_\ast(t),0)|\leq d_0\}$ with some $d_0>0$ and $s_0\geq t_\ast$. 
As in the proof of Theorem~\ref{Th4} it can shown that for all $\epsilon>0$ there exist $s_1\geq s_0$ and $0<d_1\leq d_0$ such that
\begin{gather}\label{estast}
	L  \Theta_\ast(x,y,t) \leq   t^{-\frac{n}{q}}\left(-\gamma_\ast \Theta_\ast(x,y,t) + \mu M_0 +M_2 t^{-\frac{N-n+1}{q}}\right)
\end{gather}
for all $(x,y,t)\in\mathfrak D_\ast(d_1,s_1)$ with $\gamma_\ast=(1-\epsilon)|\Lambda'_{n}(\xi_\ast)|>0$ and some $M_0, M_2={\hbox{\rm const}}>0$.

In this case, the Lyapunov function candidate for system \eqref{FulSys} can be considered in the form
\begin{gather*}
	\mathcal V_\ast(x,y,t)= \Theta_\ast(x,y,t) - \frac{q \mu M_0}{q-n} t^{1-\frac{n}{q}}+(M_1 +q M_2)t^{-\frac{1}{q}}  
\end{gather*}
with $N=n$. From  \eqref{axestast} and \eqref{estast} it follows that
\begin{align*}
&  \rho^2 ({\bf z},t;v_\ast(t),0) \leq \mathcal V_\ast(x,y,t)\leq  \rho^2 ({\bf z},t;v_\ast(t),0) + M t^{-\frac{1}{q}},\\ 
& L\mathcal V_\ast(x,y,t)\leq -t^{-\frac{n}{q}} \gamma_\ast \Theta_\ast(x,y,t) - \frac{ M_1}{q} t^{-1-\frac{1}{q}} \leq 0
\end{align*}
for all $(x,y,t)\in \mathfrak D_\ast(d_1,s_1)$ with $M=q\mu M_0/(n-q)+M_1 +q M_2$. We see that the constructed function $\mathcal V_\ast(x,y,t)$ satisfies the inequalities similar to \eqref{LU3est}. Hence, the rest of the proof can repeat the line of reasoning of Theorem~\ref{Th2} with $\mathcal V_\ast(x,y,t)$ and $\rho^2 ({\bf z},t;v_\ast(t),0)$ instead of $\mathcal V_0(x,y,t)$ and $\rho ({\bf z},t;\zeta_0(t),\vartheta_0)$, respectively.
\end{proof}

\section{Examples}\label{SecEx}
In this section, we consider the examples of asymptotically autonomous systems with damped stochastic perturbations and discuss the application of the proposed theory. 

\subsection{Example 1.} 
First, consider a stochastic system in the following form:
\begin{gather}\label{Ex1}
\begin{split}
&dx=y\,dt, \\ 
&dy=(-\sin x+t^{-\frac{h}{q}} a  y )\,dt+t^{-\frac{p}{q}} (c + b \sin x)\,dw_2(t),
\end{split}
\end{gather}
with $p,q,h\in\mathbb Z_+$ and parameters $a,b,c\in\mathbb R$. Note that this system is of the form \eqref{FulSys} with 
\begin{gather*}
{\bf a}({\bf z},t)\equiv {\bf a}_0({\bf z})+t^{-\frac{h}{q}}{\bf a}_h({\bf z}), \quad 
{\bf A}({\bf z},t)\equiv t^{-\frac{p}{q}} {\bf A}_p({\bf z}), \\
{\bf a}_0({\bf z})\equiv \begin{pmatrix} y \\ -\sin x\end{pmatrix}, \quad 
{\bf a}_h({\bf z})\equiv \begin{pmatrix} 0 \\ a  y \end{pmatrix}, \quad 
{\bf A}_p({\bf z})\equiv \begin{pmatrix} 0 & 0 \\ 0 & c + b \sin x\end{pmatrix},
\end{gather*}
and the corresponding limiting system \eqref{LimSys} with $H_0(x,y)=1-\cos x+y^2/2$ has a stable equilibrium at the origin. In this case, the level lines $H_0(x,y)\equiv E$ as $E \in (0, 2)$, lying in the neighbourhood of the equilibrium, correspond to $T(E)$-periodic solutions of system \eqref{LimSys} such that $\nu(E)\equiv2\pi/T(E)=1-E/8+\mathcal O(E^2)$ as $E\to 0$.

Let us remark that system \eqref{Ex1} with $h=q$ and $a=-1$ corresponds to the third Painlev\'{e} equation (see, for example,~\cite[Ch. 13]{IKNF06}) with a multiplicative stochastic perturbation.

{\bf 1.} Let $p=q=h=2$. Then the changes of the variables described in Section~\ref{Sec3} with $N=4$, $v_1(E,\varphi)\equiv v_3(E,\varphi)\equiv 0$,
\begin{eqnarray*}
v_2(E,\varphi) & = & - \frac{a}{\nu(E)}\left\{\int\limits_0^\varphi \left\{\big(Y(\varsigma,E)\big)^2\right\}_\varsigma\,d\varsigma\right\}_\varphi,\\
v_4(E,\varphi) & = &- \frac{1}{ \nu(E)}\left\{\int\limits_0^\varphi \left\{\frac 12 \big(c + b \sin X(\varsigma,E)\big)^2+ R_4(E,\varsigma)\right\}_\varsigma\,d\varsigma\right\}_\varphi,
\end{eqnarray*}
transform system \eqref{Ex1} into \eqref{Veq} with $\Lambda_1(v)\equiv \Lambda_3(v)\equiv 0$,
\begin{gather*}
\Lambda_2(v)=a \left\langle\big(Y(\varphi,v)\big)^2\right\rangle_\varphi, \quad 
\quad \Lambda_4(v)=  \left\langle\frac 12 \big(c + b \sin X(\varphi,v)\big)^2+ R_4(v,\varphi)\right\rangle_\varphi,
\end{gather*}
where
\begin{align*}
R_4(E,\varphi) \equiv &  -\partial_E\Lambda_2(E)v_2(E,\varphi)   +a Y (\varphi,E)  \Big( Y(\varphi,E)\partial_E+\partial_y \Phi(X(\varphi,E) ,Y(\varphi,E) )\partial_\varphi \Big) v_2(E,\varphi).
\end{align*}
Recall that $\langle C(\varphi)\rangle_\varphi$ is the average of a periodic function $C(\varphi)$ and $\{C(\varphi)\}_\varphi=C(\varphi)-\langle C(\varphi)\rangle_\varphi$. Note that
\begin{gather*}
\Lambda_2(v)=a v (1+\mathcal O(v)), \quad \Lambda_4(v)= \frac{c^2}{2}+\mathcal O(v), \quad v\to 0.
\end{gather*}
We see that the transformed system satisfies \eqref{ass1} and \eqref{ass21} with $n=2$, $(n,p,q)\in \Sigma_1$ and $\lambda_n=a$. In this case, $\mu_{2p}=c^2/2$, $\xi_0=c^2/(2|a+1|)$ and $\vartheta_0=1$. It follows from Theorem~\ref{Th2} that if $a<-1$, then $H_0(x(t),y(t))\approx t^{-1}\xi_0$ as $t\to\infty$ with high probability for solutions of system \eqref{Ex1} with initial data such that $|t_0 H_0(x(t_0),y(t_0))-\xi_0|$ is small enough (see Fig.~\ref{FigEx11}). 

\begin{figure}
\centering
\subfigure[$a=-2$ ]{\includegraphics[width=0.4\linewidth]{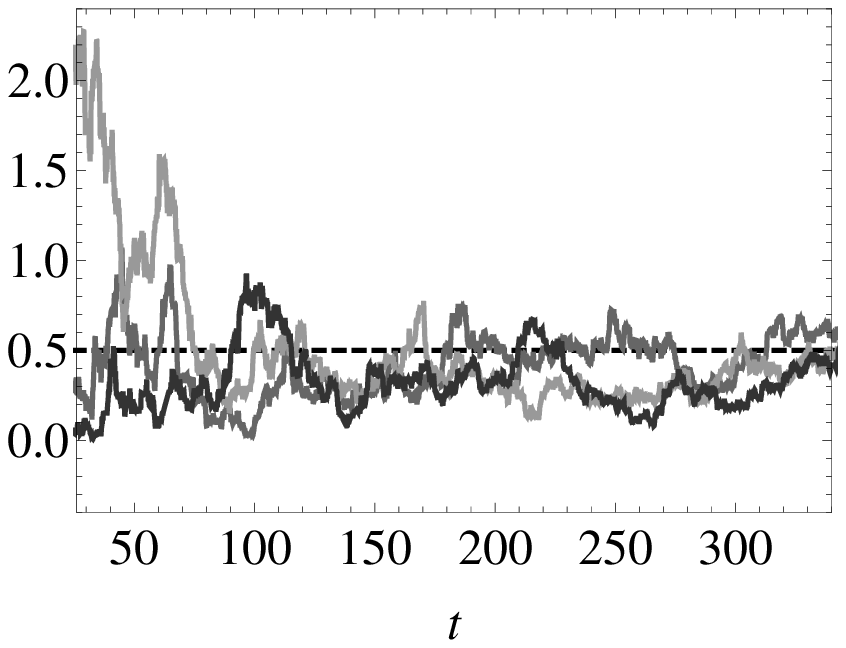}}
\hspace{4ex}
\subfigure[$a=-0.5$]{\includegraphics[width=0.4\linewidth]{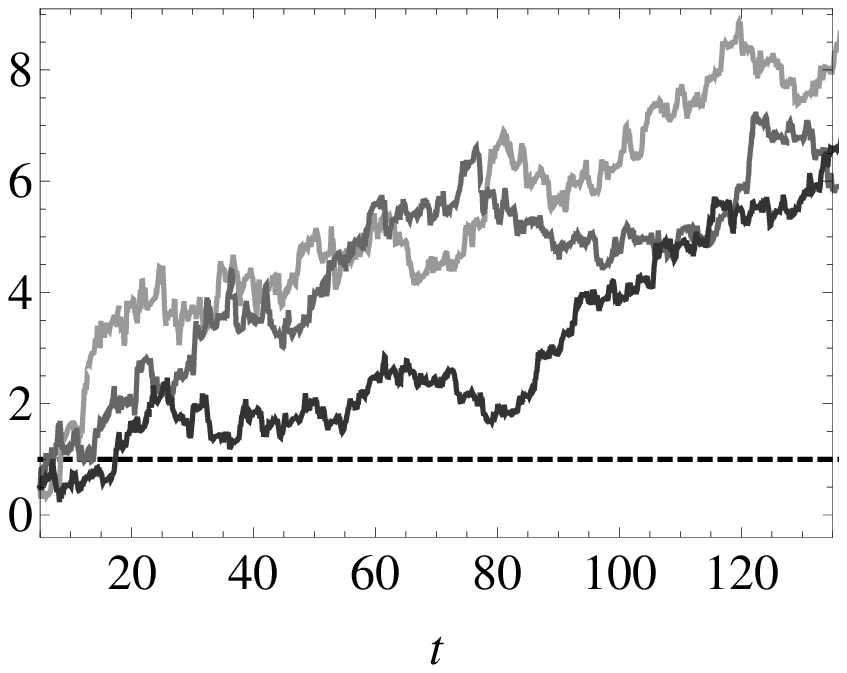}}
\caption{\small The evolution of $t H_0(x(t),y(t))$ for sample paths of the solutions to system \eqref{Ex1} with $p=q=h=2$, $b=0.5$, $c=1$ and various initial data. The dashed lines correspond to $\xi_0=c^2/(2|a+1|)$.} \label{FigEx11}
\end{figure}

{\bf 2.} Let $p=1$ and $q=h=2$. Under the changes of the variables described in Section~\ref{Sec3} with $N=2$, $v_1(E,\varphi)\equiv 0$,
\begin{eqnarray*}
v_2(E,\varphi) & = & - \frac{1}{\nu(E)}\left\{\int\limits_0^\varphi \left\{a\big(Y(\varsigma,E)\big)^2+\frac 12 \big(c + b \sin X(\varsigma,E)\big)^2\right\}_\varsigma\,d\varsigma\right\}_\varphi,
\end{eqnarray*}
system \eqref{Ex1} is transformed into \eqref{Veq} with $\Lambda_1(v)\equiv 0$ and
\begin{gather*}
\Lambda_2(v)= \left\langle a \big(Y(\varphi,v)\big)^2 +\frac 12 \big(c + b \sin X(\varphi,v)\big)^2\right\rangle_\varphi=\frac{1}{2 }\left(c^2+(2a+b^2)v +\mathcal O(v^2)\right), \quad v\to 0.
\end{gather*}
In this case, the transformed system satisfies \eqref{ass1}  with $n=2$ and $(n,p,q)\in \Sigma_2$.
If $2a+b^2<0$, then for sufficiently small $c>0$ there exists $\xi_\ast>0$ such that $\Lambda_2(\xi_\ast)=0$ and $\Lambda_2'(\xi_\ast)<0$. Hence assumption \eqref{ass3} holds with $\xi_\ast=c^2/|2a+b^2|+\mathcal O(c^3)$ and $\Lambda_2'(\xi_\ast)=-|2a+b^2|/2+\mathcal O(c^2)$ as $c\to 0$. It can easily be checked that assumption \eqref{asslimc} holds with $\mu=(|c|+|b|)^2/2$. By applying Theorem~\ref{Th4}, we find that if $2a+b^2<0$, then $H_0(x(t),y(t))\approx \xi_\ast$ on an asymptotically long time interval as $\mu\to 0$ for solutions of system \eqref{Ex1} with initial data such that $|H_0(x(t_0),y(t_0))-\xi_\ast|$ is sufficiently small (see Fig.~\ref{FigEx12}).

\begin{figure}
\centering
\subfigure[$a=-1$ ]{\includegraphics[width=0.4\linewidth]{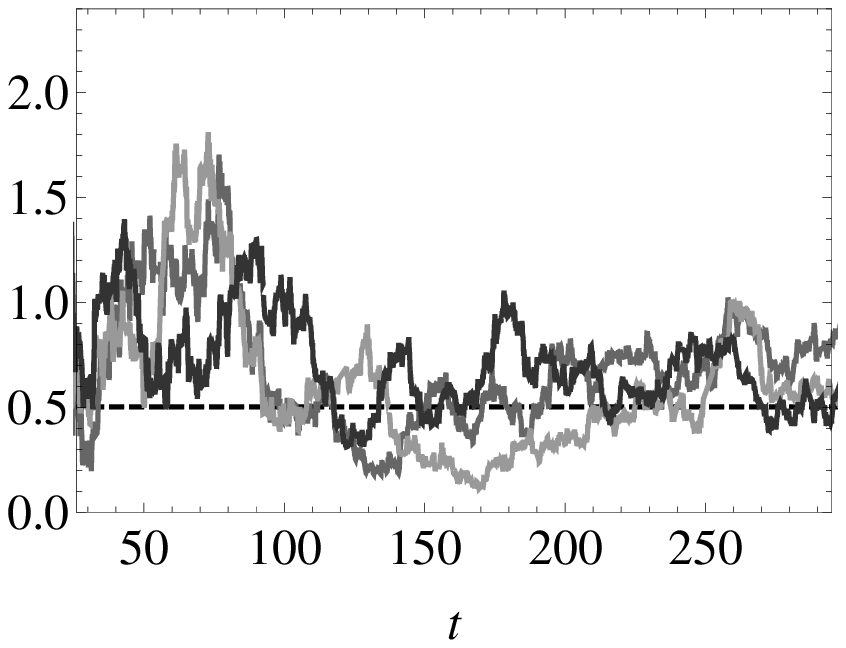}}
\hspace{4ex}
\subfigure[$a=0.5$]{\includegraphics[width=0.4\linewidth]{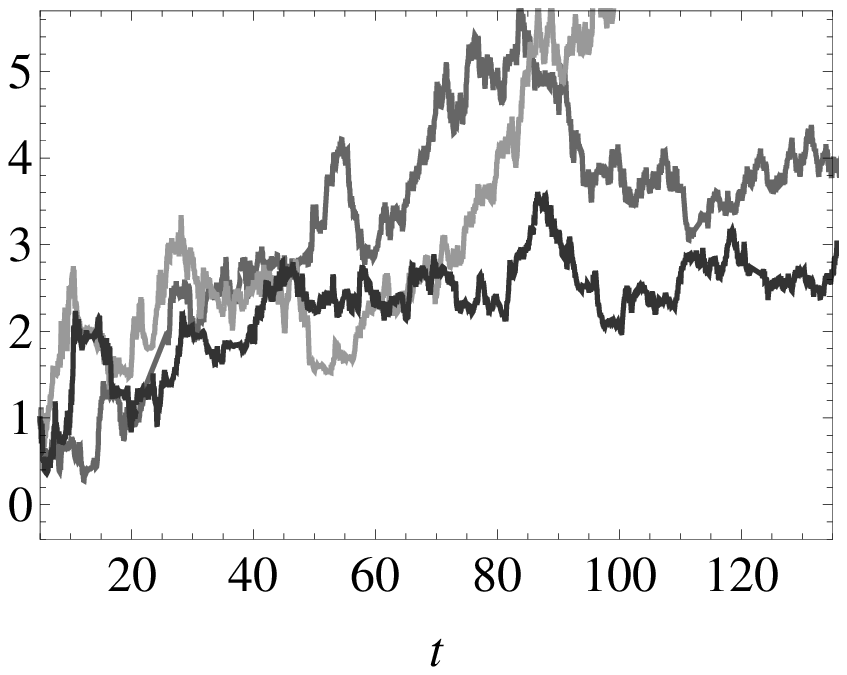}}
\caption{\small The evolution of $H_0(x(t),y(t))$ for sample paths of the solutions to system \eqref{Ex1} with $p=1$, $q=h=2$, $b=0.1$, $c=1$ and various initial data. The dashed line corresponds to $c^2/|2a+b^2|$.} \label{FigEx12}
\end{figure}

{\bf 3.} Let $p=q=2$ and $h=3$. Then the transformations described in Section~\ref{Sec3} with $N=3$, $v_1(E,\varphi)\equiv v_2(E,\varphi)\equiv0$, and
\begin{eqnarray*}
v_3(E,\varphi) & = & - \frac{a}{\nu(E)}\left\{\int\limits_0^\varphi \left\{ \big(Y(\varsigma,E)\big)^2 \right\}_\varsigma\,d\varsigma\right\}_\varphi
\end{eqnarray*}
reduce system \eqref{Ex1} to the form \eqref{Veq} with $\Lambda_1(v)\equiv\Lambda_2(v)\equiv 0$, and
\begin{gather*}
\Lambda_3(v)=a\left\langle  \big(Y(\varphi,v)\big)^2\right\rangle_\varphi=a v (1+\mathcal O(v)), \quad v\to 0.
\end{gather*}
We see that the transformed system satisfies \eqref{ass1} with $n=3$ and $(n,p,q)\in \Sigma_3$. Moreover, there exists $\varrho_0>0$ such that $\Lambda_3(v)\neq 0$ for all $v\in (0,\varrho_0)$ and $\Lambda_3'(v)=a+\mathcal O(v)$ as $v\to 0$. Hence, if $a\neq 0$, then assumption \eqref{ass4} holds with $\xi_\ast\in (0,\varrho_0)$. Applying Theorem~\ref{Th5} shows that if $a<0$, then $H_0(x(t),y(t))\approx \xi_\ast$ as $t\to\infty$ with high probability for solutions of system \eqref{Ex1} with small enough $|H_0(x(t_0),y(t_0))-\xi_\ast|$ (see Fig.~\ref{FigEx13}).

\begin{figure}
\centering
\subfigure[$a=-1$ ]{\includegraphics[width=0.4\linewidth]{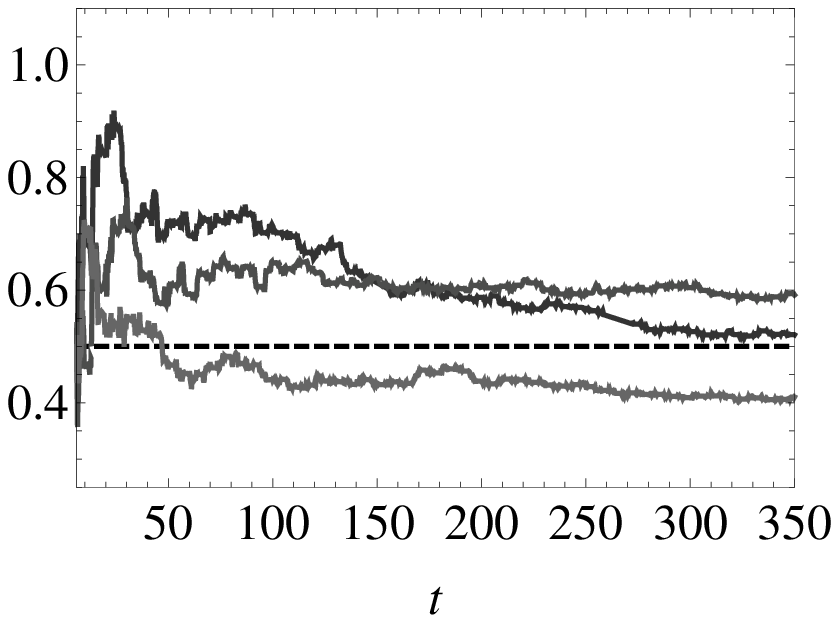}}
\hspace{4ex}
\subfigure[$a=1$]{\includegraphics[width=0.4\linewidth]{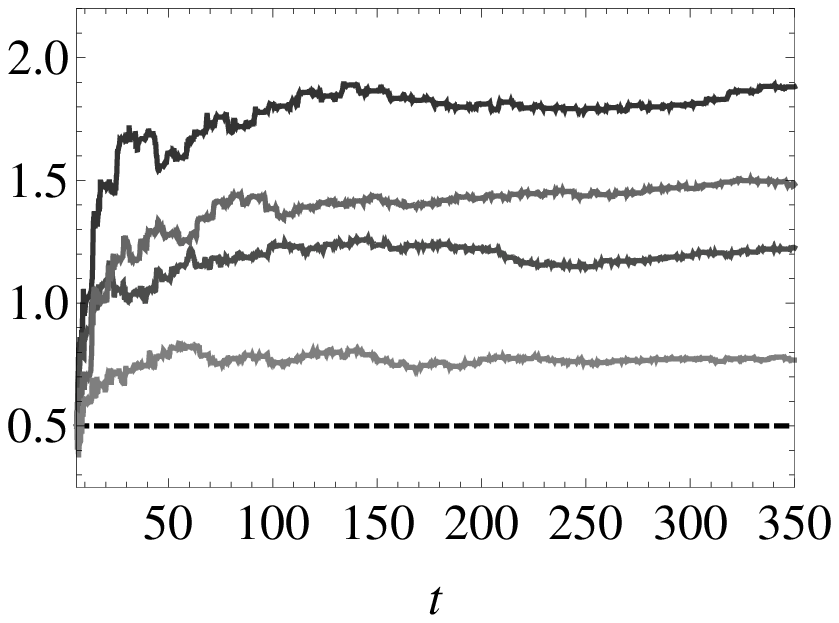}}
\caption{\small The evolution of $t H_0(x(t),y(t))$ for sample paths of the solutions to system \eqref{Ex1} with $p=q=2$, $h=3$, $b=1$, $c=\sqrt{0.5}$ and initial data $x(t_0)=0$, $y(t_0)=1$. The dashed lines correspond to $\xi_\ast=1/2$.} \label{FigEx13}
\end{figure}

\subsection{Example 2.} 
The perturbed system 
\begin{gather}\label{Ex2}
\begin{split}
&dx=y\,dt, \\ 
&dy=\left(-x+t^{-\frac{h}{q}} F_{1}(x,y)+t^{-\frac{h+d}{q}} F_{2}(y) \right)\,dt+t^{-\frac{p}{q}} c\,dw_2(t),
\end{split}\\
\nonumber F_1(x,y)\equiv \frac{a x^2y}{1+x^2}, \quad  F_2(y)\equiv b y, \quad p,q,h,d\in\mathbb Z_+, \quad a,b,c={\hbox{\rm const}}
\end{gather}
is of the form \eqref{FulSys} with 
\begin{gather*}
{\bf a}({\bf z},t)\equiv {\bf a}_0({\bf z})+t^{-\frac{h}{q}}{\bf a}_{h}({\bf z})+t^{-\frac{h+d}{q}}{\bf a}_{h+d}({\bf z}), \quad 
{\bf A}({\bf z},t)\equiv t^{-\frac{p}{q}} {\bf A}_p({\bf z}), \\
{\bf a}_0({\bf z})\equiv \begin{pmatrix} y \\ -x\end{pmatrix}, \quad 
{\bf a}_h({\bf z})\equiv \begin{pmatrix} 0 \\ F_1(x,y) \end{pmatrix}, \quad
{\bf a}_{h+d}({\bf z})\equiv \begin{pmatrix} 0 \\ F_{2}(x,y) \end{pmatrix}, \quad 
{\bf A}_p({\bf z})\equiv \begin{pmatrix} 0 & 0 \\ 0 & c\end{pmatrix}.
\end{gather*}
It follows that the corresponding limiting system \eqref{LimSys} with $H_0(x,y)=(x^2+y^2)/2$ has a stable equilibrium at the origin and $2\pi$-periodic solutions wtih $\nu(E)\equiv 1$.

{\bf 1.} Let $h=d=1$ and $p=q=2$. The changes of the variables described in Section~\ref{Sec3} with $N=4$ and
\begin{align*}
v_1(E,\varphi)   = &   -\left\{\int\limits_0^\varphi \left\{Y(\varsigma,E) F_1(X(\varsigma,E),Y(\varsigma,E)) \right\}_\varsigma\,d\varsigma\right\}_\varphi,\\
v_2(E,\varphi)   = &    -\left\{\int\limits_0^\varphi \left\{Y(\varsigma,E) F_2( Y(\varsigma,E)) + R_2(E,\varsigma)\right\}_\varsigma\,d\varsigma\right\}_\varphi, \\ 
v_3(E,\varphi) = & -\left\{\int\limits_0^\varphi \left\{R_3(E,\varsigma)\right\}_\varsigma\,d\varsigma\right\}_\varphi,\quad
v_4(E,\varphi) = -\left\{\int\limits_0^\varphi \left\{ R_4(E,\varsigma)\right\}_\varsigma\,d\varsigma\right\}_\varphi
\end{align*}
transform system \eqref{Ex2} into the form \eqref{Veq} with  
\begin{align*}
\Lambda_1(v)\equiv & \left\langle Y(\varphi,v) F_1(X(\varphi,v),Y(\varphi,v))\right\rangle_\varphi=\frac{a v^2}{2} (1+\mathcal O(v)),\\
\Lambda_2(v)\equiv & \left\langle Y(\varphi,v) F_2(X(\varphi,v),Y(\varphi,v))+ R_2(v,\varphi)\right\rangle_\varphi=b v(1+\mathcal O(v)),\\
\Lambda_3(v)\equiv & \left\langle R_3(v,\varphi)\right\rangle_\varphi=\mathcal O(v),\\
\Lambda_4(v)\equiv & \left\langle \frac{c^2}{2}+R_4(v,\varphi)\right\rangle_\varphi=\frac{c^2}{2}+\mathcal O(v)
\end{align*}
as $v\to 0$, where
\begin{align*}
R_2  \equiv &  - v_1\partial_E\Lambda_1+F_1(X ,Y ) \left( Y\partial_E-\frac{X}{2E}\partial_\varphi \right) v_1,\\
R_3  \equiv &  - v_1\left(\frac{1}{2}+\partial_E\Lambda_2\right) -v_2\partial_E\Lambda_1  -\frac{v_1^2}{2}\partial^2_E\Lambda_1  +F_1(X ,Y ) \left( Y\partial_E-\frac{X}{2E}\partial_\varphi \right) v_2\\
&+F_2(Y ) \left( Y\partial_E-\frac{X}{2E}\partial_\varphi \right) v_1,\\
R_4 \equiv &  -v_1\partial_E\Lambda_3-v_2(1+\partial_E\Lambda_2)-v_3\partial_E\Lambda_1 -\frac{v_1^2}{2}\partial^2_E\Lambda_2-\frac{v_1^3}{6}\partial^3_E\Lambda_1+F_1(X,Y)\left( Y\partial_E-\frac{X}{2E}\partial_\varphi \right) v_3 \\
& +F_2(Y)\left( Y\partial_E-\frac{X}{2E}\partial_\varphi \right) v_2.
\end{align*}
Hence, the transformed system satisfies \eqref{ass1} and \eqref{ass22} with $n=1$, $m=2$, $d=1$, $\lambda_{n,m}=a/2$, $\lambda_{n+d}=b$, $\mu_{2p}=c^2/2$, $(n+d,p,q)\in \Sigma_1$. Moreover, $(2p-n)/(2p-n-d)=3/2<m$, $\vartheta_1=1/2$, $\xi_1=|2b+1|/|a|$. It follows from Theorem~\ref{Th3} ({\bf Case I}) that if $a<0$ and $b+1/2>0$, then $H_0(x(t),y(t))\approx t^{-1/2}\xi_1$ as $t\to\infty$ with high probability for solutions of system \eqref{Ex2} with initial data such that $|t_0^{1/2} H_0(x(t_0),y(t_0))-\xi_1|$ is sufficiently small (see Fig.~\ref{FigEx21}). 

\begin{figure}
\centering
 {\includegraphics[width=0.4\linewidth]{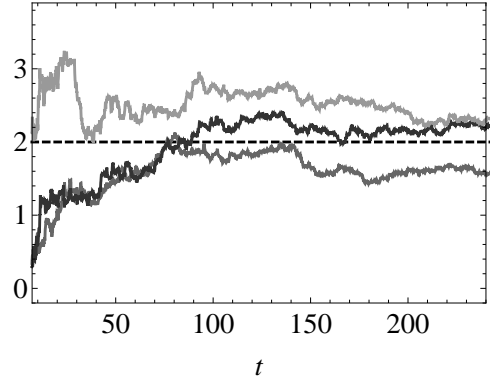}}
\caption{\small The evolution of $t^{1/2} H_0(x(t),y(t))$ for sample paths of the solutions to system \eqref{Ex2} with $h=d=1$, $p=q=2$, $a=-1$, $b=0.5$, $c=1$. The dashed line corresponds to $\xi_1=2$.} \label{FigEx21}
\end{figure}

{\bf 2.} Let $h=1$, $d=p=2$ and $q=3$. It can easily be checked that the changes of the variables described in Section~\ref{Sec3} with $N=4$ and
\begin{align*}
v_1(E,\varphi)  = &   -\left\{\int\limits_0^\varphi \left\{Y(\varsigma,E) F_1(X(\varsigma,E),Y(\varsigma,E)) \right\}_\varsigma\,d\varsigma\right\}_\varphi,\quad
v_2(E,\varphi)  =   -\left\{\int\limits_0^\varphi \left\{R_2(E,\varsigma)\right\}_\varsigma\,d\varsigma\right\}_\varphi, \\
v_3(E,\varphi)  = &  -\left\{\int\limits_0^\varphi \left\{Y(\varsigma,E) F_2( Y(\varsigma,E)) + R_3(E,\varsigma)\right\}_\varsigma\,d\varsigma\right\}_\varphi, \quad
v_4(E,\varphi)  =   -\left\{\int\limits_0^\varphi \left\{ R_4(E,\varsigma)\right\}_\varsigma\,d\varsigma\right\}_\varphi
\end{align*}
transform \eqref{Ex2} into \eqref{Veq} with  
\begin{align*}
\Lambda_1(v) =&\left\langle Y(\varphi,v) F_1(X(\varphi,v),Y(\varphi,v))\right\rangle_\varphi=\frac{a v^2}{2} (1+\mathcal O(v)),\\ 
\Lambda_2(v) =& \left\langle  R_2(v,\varphi)\right\rangle_\varphi=\mathcal O(v^3),\\
\Lambda_3(v)  =&\left\langle Y(\varphi,v) F_2(X(\varphi,v),Y(\varphi,v))+R_3(v,\varphi)\right\rangle_\varphi=bv (1+\mathcal O(v)),\\ 
\Lambda_4(v) =& \left\langle\frac{c^2}{2}+ R_4(v,\varphi)\right\rangle_\varphi=\frac{c^2}{2}+\mathcal O(v)
\end{align*}
as $v\to 0$,
where
\begin{align*}
R_2  \equiv &  - v_1\partial_E\Lambda_1+F_1(X ,Y ) \left( Y\partial_E-\frac{X}{2E}\partial_\varphi \right) v_1,\\
R_3  \equiv &  - v_1\partial_E\Lambda_2 -v_2\partial_E\Lambda_1  -\frac{v_1^2}{2}\partial^2_E\Lambda_1 +F_1(X ,Y ) \left( Y\partial_E-\frac{X}{2E}\partial_\varphi \right) v_2,\\
R_4 \equiv &  -v_1\left(\frac{1}{3}+\partial_E\Lambda_3\right)-v_2\partial_E\Lambda_2-v_3\partial_E\Lambda_1 -\frac{v_1^2}{2}\partial^2_E\Lambda_2-\frac{v_1^3}{6}\partial^3_E\Lambda_1 + F_1(X,Y)\left( Y\partial_E-\frac{X}{2E}\partial_\varphi \right) v_3\\
  & +F_2(Y)\left( Y\partial_E-\frac{X}{2E}\partial_\varphi \right) v_1.
\end{align*}
It follows that the transformed system satisfies \eqref{ass1} and \eqref{ass22} with $n=1$, $m=2$, $d=2$, $\lambda_{n,m}=a/2$, $\lambda_{n+d}=b$, $\mu_{2p}=c^2/2$, $(n+d,p,q)\in \Sigma_1$. In this case,  $(2p-n)/(2p-n-d)=3>m$, $\vartheta_2=1/2$, $\xi_2=|c|/\sqrt{|a|}$. By applying Theorem~\ref{Th3} ({\bf Case II}), we see that if $a<0$, then $H_0(x(t),y(t))\approx t^{-1/2}\xi_2$ as $t\to\infty$ with high probability for solutions of system \eqref{Ex2} with initial data such that $|t_0^{1/2} H_0(x(t_0),y(t_0))-\xi_2|$ is small enough (see Fig.~\ref{FigEx22}). 

\begin{figure}
\centering
 {\includegraphics[width=0.4\linewidth]{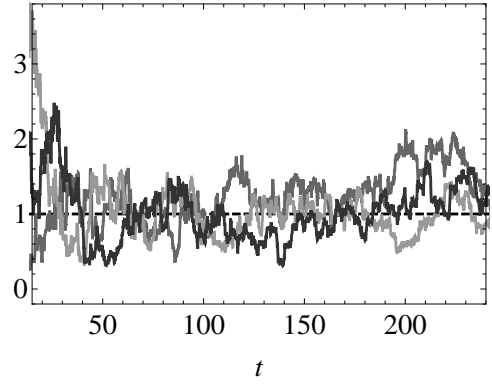}}
\caption{\small The evolution of $t^{1/2} H_0(x(t),y(t))$ for sample paths of the solutions to system \eqref{Ex2} with $h=1$, $d=p=2$, $q=3$, $a=-1$, $b=0.5$, $c=1$. The dashed line corresponds to $\xi_2=1$.} \label{FigEx22}
\end{figure}

{\bf 3.} Let $h=2$, $d=1$, $p=2$ and $q=3$. The changes of the variables described in Section~\ref{Sec3} with $N=4$, $v_1(E,\varphi)\equiv 0$, and
\begin{align*}
v_2(E,\varphi)  = &   -\left\{\int\limits_0^\varphi \left\{Y(\varsigma,E) F_1(X(\varsigma,E),Y(\varsigma,E)) \right\}_\varsigma\,d\varsigma\right\}_\varphi,\\
v_3(E,\varphi)  = &  -\left\{\int\limits_0^\varphi \left\{Y(\varsigma,E) F_2( Y(\varsigma,E)\right\}_\varsigma\,d\varsigma\right\}_\varphi, \quad
v_4(E,\varphi)  =    -\left\{\int\limits_0^\varphi \left\{ R_4(E,\varsigma)\right\}_\varsigma\,d\varsigma\right\}_\varphi 
\end{align*}
transform system \eqref{Ex2} into the form \eqref{Veq} with $\Lambda_1(v)\equiv 0$,
\begin{align*}
\Lambda_2(v) =&\left\langle Y(\varphi,v) F_1(X(\varphi,v),Y(\varphi,v))\right\rangle_\varphi=\frac{a v^2}{2} (1+\mathcal O(v)),\\ 
\Lambda_3(v) =& \left\langle  Y(\varphi,E) F_2( Y(\varphi,E)\right\rangle_\varphi=bv (1+\mathcal O(v)),\\
\Lambda_4(v) =& \left\langle\frac{c^2}{2}+ R_4(v,\varphi)\right\rangle_\varphi=\frac{c^2}{2}+\mathcal O(v)
\end{align*}
as $v\to 0$,
where
\begin{align*}
R_4(E,\varphi) \equiv &  F_1(X(\varphi,E),Y(\varphi,E))\left( Y(\varphi,E)\partial_E-\frac{X(\varphi,E)}{2E}\partial_\varphi \right) v_2(E,\varphi).
\end{align*}
Hence the transformed system satisfies \eqref{ass1} and \eqref{ass22} with $n=2$, $m=2$, $d=1$, $\lambda_{n,m}=a/2$, $\lambda_{n+d}=b$, $\mu_{2p}=c^2/2$, $(n+d,p,q)\in \Sigma_1$. It can easily be checked that  $(2p-n)/(2p-n-d)=2=m$ and $\vartheta_2=1/3$. Apllying Theorem~\ref{Th3} ({\bf Case III}) shows that if $a<0$ or $a>0$, $b+1/3<0$ and $c^2<(b+1/3)^2/a$, then there exists $\xi_3>0$ such that $H_0(x(t),y(t))\approx t^{-1/3}\xi_3$ as $t\to\infty$ with high probability for solutions of system \eqref{Ex2} with initial data such that $|t_0^{1/3} H_0(x(t_0),y(t_0))-\xi_3|$ is sufficiently small (see Fig.~\ref{FigEx23}). From Lemma~\ref{Lem3} it follows that 
\begin{gather*}
\xi_3=
\begin{cases}\displaystyle
\left|\frac{3b+1 }{3a}\right|\left({\hbox{\rm sgn}} (3b+1) +\sqrt{1+\frac{9|a|c^2}{(3b+1)^2}}\right) & \text{if } \quad a<0\\
\displaystyle
\frac{|3b+1| }{3a}\left(1 -\sqrt{1-\frac{9|a|c^2}{(3b+1)^2}}\right) & \text{if } \displaystyle \quad a>0, \quad b+\frac{1}{3}<0, \quad c^2<\frac{(3b+1)^2}{9a}
\end{cases}.
\end{gather*} 

\begin{figure}
\centering
\subfigure[$a=-1$, $b=2/3$, $c=1$]{\includegraphics[width=0.4\linewidth]{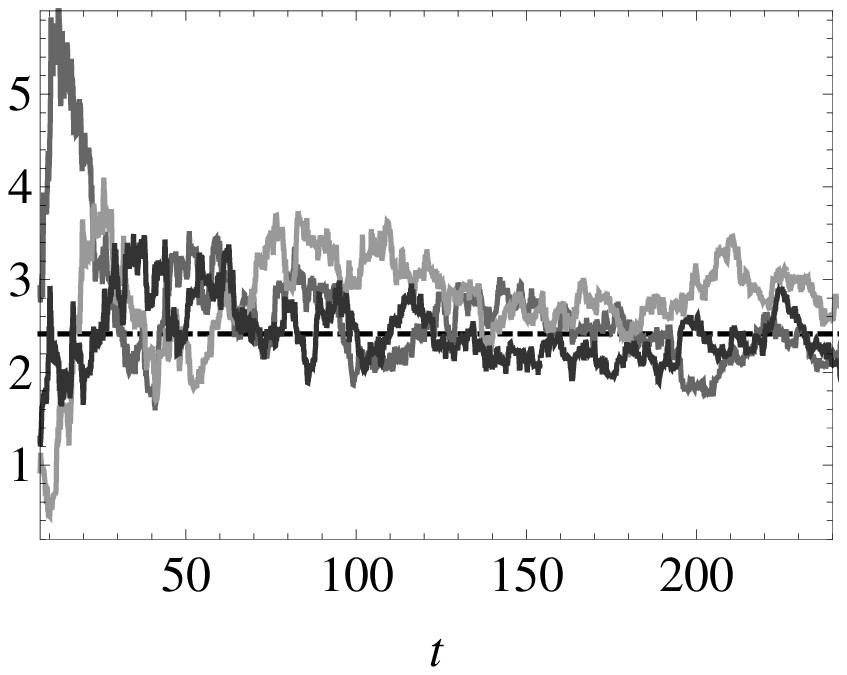}}
\hspace{4ex}
\subfigure[$a=1$, $b=-4/3$, $c=\sqrt{0.5}$]{\includegraphics[width=0.4\linewidth]{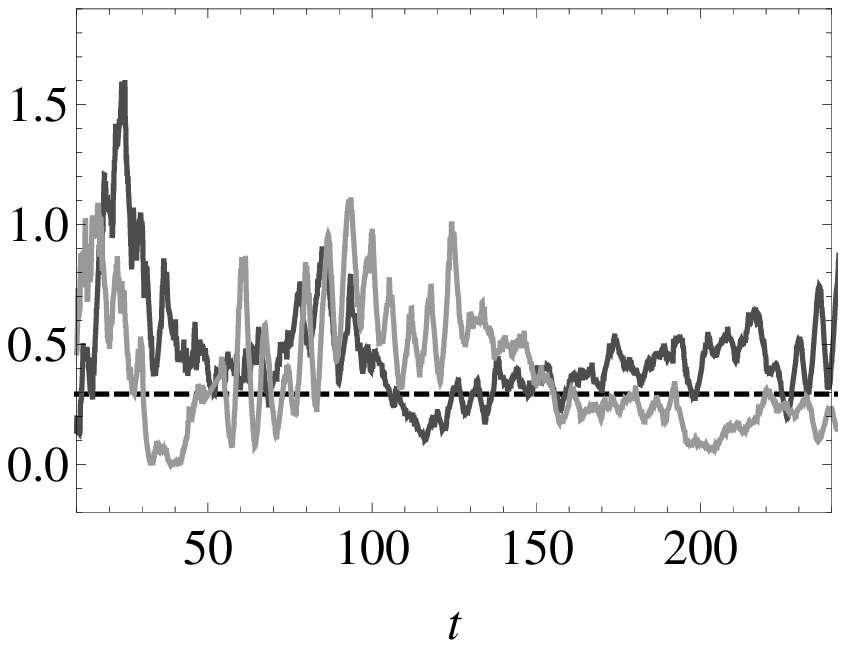}}
\caption{\small The evolution of $t^{1/3} H_0(x(t),y(t))$ for sample paths of the solutions to system \eqref{Ex2} with $h=2$, $d=1$, $p=2$, $q=3$. The dashed line corresponds to (a) $\xi_3=1+\sqrt 2$, (b) $\xi_3=1-\sqrt{0.5}$.} \label{FigEx23}
\end{figure}

\section{Application}\label{Appl}
In this section, the results obtained are applied to the model of parametric autoresonance with stochastic perturbations. Autoresonance is a phenomenon of a persistent phase-locking between an oscillatory nonlinear system and a small slowly-varying chirped-frequency excitation, resulting in a significant increase in the energy of the system~\cite{LFS09}. Such effects were first studied in problems related to acceleration of particles~\cite{VIV45,M45}, and over time it was found that autoresonance can be used in many applied problems~\cite{FGF00,LFSH16,GMetal17,LF19etal,LFAGS20}.

Consider the equations describing the initial stage of the autoresonant capture in systems with a parametric excitation~\cite{LKRMS08}:
\begin{gather}\label{PARsys}
\frac{d\Psi}{d\tau}=\mathcal E-a \tau + b \cos \Psi, \quad \frac{d \mathcal E}{d\tau}=\mathcal E(\sin \Psi - c),
\end{gather}
where $a,b,c={\hbox{\rm const}}>0$, $0<c<1$, $\Psi(\tau)$ and $\mathcal E(\tau)$ are unknown functions corresponding to the phase shift and energy of the oscillatory system, respectively. Solutions of system \eqref{PARsys} with $\Psi(\tau)\approx {\hbox{\rm const}}$ and $\mathcal E(\tau)\approx a \tau$ as $\tau\to\infty$ are associated with the phase-locking phenomenon and the capture into autoresonance. Note that system \eqref{PARsys} has a particular autoresonant solution $\Psi_\ast(\tau)$, $\mathcal E_\ast(\tau)$ with the following asymptotic expansion~\cite{OS19}:
\begin{gather}\label{parsol}
\Psi_\ast(\tau)=\sum_{j=0}^\infty \psi_j \tau^{-j}, \quad 
\mathcal E_\ast(\tau)=a\tau+\sum_{j=0}^\infty \mathcal E_j \tau^{-j}, \quad \tau\to\infty,
\end{gather}
where $\psi_j$ and $\mathcal E_j$ are constants. In particular, $\psi_0=\pi-\arcsin c$, $\psi_1=1/\cos\psi_0$, and $\mathcal E_0=-b \cos\psi_0$. Let us show that this solution is stable with respect to stochastic perturbations. Note that the stability of the parametric autoresonance with respect to small stochastic perturbations over a finite time interval was analysed in~\cite{OS19}. In this section, we discuss the long-term persistence of the capture. 

Consider the perturbed system in the form of It\^{o} stochastic differential equations
\begin{gather}\label{PARpert}
\begin{split}
& d\Psi = \left(\mathcal E-a \tau + b \cos \Psi\right) d\tau + \tilde{\alpha}_{1}\tau^{-\beta_1} d\tilde{w}_1(\tau),\\
& d\mathcal E = \mathcal E\left(\sin\Psi-c\right) d\tau + \tilde{\alpha}_{2}\tau^{-\beta_2} d\tilde{w}_2(\tau)
\end{split}
\end{gather}
as $\tau\geq \tau_0>0$, where $\tilde{\alpha}_1,\tilde{\alpha}_2,\beta_1,\beta_2$ are some constants and $(\tilde w_1(t),\tilde w_2(t))^T$ is a two-dimensional Wiener process on a probability space $(\Omega,\mathcal F,\mathbb P)$.  Substituting  
\begin{gather}\label{PARsubs}
\Psi(\tau)=\Psi_\ast(\tau)+ x(t), \quad \mathcal E(\tau)=\mathcal E_\ast(\tau)+ \gamma \tau^{\frac{1}{2}} y(t), \quad t=\frac{2\gamma}{3}\tau^{\frac{3}{2}}, \quad \gamma=\big(a^2(1-c^2)\big)^{\frac{1}{4}}>0
\end{gather}
into \eqref{PARpert} yields the asymptotically autonomous system
\begin{gather}\label{PARxy}
dx= P(x,y,t)dt+\alpha_1 t^{-\frac{1+4\beta_1}{6}} dw_1(t),\quad 
dy= Q(x,y,t)dt+\alpha_2 t^{-\frac{3+4\beta_2}{6}} dw_2(t),
\end{gather}
where 
\begin{gather*}
P(x,y,t)\equiv    y+ b t^{-\frac{1}{3}}\big(\cos(x+\Psi_\ast)-\cos\Psi_\ast\big)\left(\frac{2}{3\gamma^2}\right)^{\frac{1}{3}}, \\
Q(x,y,t)\equiv    \mathcal E_\ast t^{-\frac{2}{3}}\big(\sin(x+\Psi_\ast)-\sin\Psi_\ast\big)\left(\frac{2}{3\gamma^2}\right)^{\frac{2}{3}} + yt^{-\frac{1}{3}}\big(\sin(x+\Psi_\ast )-c\big)\left(\frac{2}{3\gamma^2}\right)^{\frac{1}{3}} - \frac{y}{3}t^{-1},\\
\alpha_1 =    \tilde \alpha_1  \left(\frac{2\gamma}{3}\right)^{\frac{1+4\beta_1}{6}}, \quad \alpha_2=\frac{\tilde \alpha_2}{\gamma}  \left(\frac{2\gamma}{3}\right)^{\frac{3+4\beta_2}{6}},
\end{gather*}
and $(w_1(t), w_2(t))^T$ is another two-dimensional Wiener process.  

{\bf 1.} Let $\beta_1=0$ and $\beta_2=-1/2$. Then system \eqref{PARxy} is of the form \eqref{FulSys} with $p=1$, $q=6$, 
\begin{gather*}
{\bf a}({\bf z},t)= {\bf a}_0({\bf z})+t^{-\frac{1}{3}}{\bf a}_2({\bf z})+\mathcal O(t^{-\frac{2}{3}}), \quad {\bf A}({\bf z},t)\equiv  t^{-\frac{1}{6}} {\bf A}_1({\bf z}), \quad  {\bf A}_1({\bf z})\equiv   \begin{pmatrix} \alpha_1 & 0 \\ 0 & \alpha_2\end{pmatrix},\\
{\bf a}_0({\bf z})\equiv  \begin{pmatrix} y \\ \frac{a}{\gamma^2}\big(\sin(x+\psi_0)-c\big)\end{pmatrix}, \quad 
{\bf a}_2({\bf z})\equiv \left(\frac{2}{3\gamma^2}\right)^{\frac{1}{3}} \begin{pmatrix} b \big(\cos (x+\psi_0)-\cos\psi_0\big) \\ y \big(\sin(x+\psi_0)-c\big) \end{pmatrix}.
\end{gather*}
The corresponding limiting system \eqref{LimSys} with 
\begin{gather*}
H_0(x,y)\equiv \frac{a}{\gamma^2}\Big(x\sin\psi_0+\cos(x+\psi_0)-\cos\psi_0\Big)+\frac{y^2}{2}=\frac{|{\bf z}|^2}{2}+\mathcal O(|{\bf z}|^3), \quad |{\bf z}|\to 0
\end{gather*}
 has a stable equilibrium at $(0,0)$, and the level lines $H_0(x,y)\equiv E$, lying in the neighbourhood of the equilibrium for a sufficiently small $E>0$, correspond to $T(E)$-periodic solutions such that 
\begin{gather*}
	\nu(E)\equiv\frac{2\pi}{T(E)}=1-\frac{12-7c^2}{96(1-c^2)} E + \mathcal O(E^2), \quad E\to 0.
\end{gather*}
The changes of the variables described in Section~\ref{Sec3} with $N=2$, $v_1(E,\varphi)\equiv 0$ and
\begin{align*}
v_2(E,\varphi) \equiv &   -\frac{1}{\nu(E)} \left(\frac{2}{3\gamma^2}\right)^{\frac{1}{3}} \left\{\int\limits_0^\varphi \left\{ \Big(Y^2 -  \frac{ab}{\gamma^2} \big[\cos(X+\psi_0)-\cos\psi_0\big]\Big)\big(\sin(X+\psi_0)-c\big)\right\}_\varsigma\,d\varsigma\right\}_\varphi\\
& +\frac{1}{2\nu(E)} \left\{\int\limits_0^\varphi \left\{  \frac{a\alpha_1^2}{\gamma^2} \cos(X+\psi_0)\right\}_\varsigma\,d\varsigma\right\}_\varphi
\end{align*}
transform system \eqref{PARxy} into the form \eqref{Veq} with $\Lambda_1(v)\equiv 0$ and 
\begin{align*}
\Lambda_2(v) =& \left(\frac{2}{3\gamma^2}\right)^{\frac{1}{3}} 
\left\langle 
\Big(\big(Y(\varphi,v)\big)^2 -  \frac{ab}{\gamma^2} \big[\cos(X(\varphi,v)+\psi_0)-\cos\psi_0\big]\Big)\big(\sin(X(\varphi,v)+\psi_0)-c\big)
\right \rangle_\varphi\\
& +  \frac{1}{2 }\left\langle 
 \alpha_2^2-\frac{a\alpha_1^2}{\gamma^2} \cos(X(\varphi,v)+\psi_0) 
\right \rangle_\varphi,
\end{align*}
where $X(\varphi,E)$, $Y(\varphi,E)$ is a $2\pi$-periodic solution of the corresponding limiting system \eqref{LimSys2pi}. It can easily be checked that
\begin{align*}
X(\varphi,E)=&\sqrt{2E}\cos\varphi+\frac{E a c }{6\gamma^2}\big(\cos (2 \varphi)-3\big) +\mathcal O(E^{\frac 32}), \\ 
Y(\varphi,E)=&-\sqrt{2E}\sin\varphi-\frac{E a c }{3\gamma^2}\sin (2 \varphi)  +\mathcal O(E^{\frac 3 2}) 
\end{align*}
as $E\to0$ uniformly for all $\varphi\in\mathbb R$. Hence,
\begin{gather*}
\Lambda_2(v)=  \frac{\alpha_1^2+\alpha_2^2}{2}-v\left[bc\left(\frac{2}{3\gamma^2}\right)^{\frac{1}{3}} + \frac{\alpha_1^2}{4(1-c^2)}\right]+\mathcal O(v^2), \quad v\to 0.
\end{gather*}
We see that the transformed system satisfies \eqref{ass1}  with $n=2$ and $(n,p,q)\in \Sigma_2$.
Moreover, for sufficiently small $\mu=(\alpha_1^2+\alpha_2^2)/2>0$ there exists $\xi_\ast>0$ such that $\Lambda_2(\xi_\ast)=0$ and $\Lambda_2'(\xi_\ast)<0$. In this case, assumption \eqref{ass3} holds with 
\begin{gather*}
\xi_\ast= \xi_\ast^{0}(1+\mathcal O(\mu)), \quad \Lambda_2'(\xi_\ast)=- bc\left(\frac{2}{3\gamma^2}\right)^{\frac{1}{3}}+\mathcal O(\mu), \quad \mu\to 0, \quad \xi_\ast^{0}=\frac{\mu}{bc}\left(\frac{3\gamma^2}{2}\right)^{\frac{1}{3}}.
\end{gather*}
By applying Theorem~\ref{Th4}, we find that $H_0(x(t),y(t))\approx \xi_\ast$ on an asymptotically long time interval for solutions of system \eqref{PARxy} with initial data such that $|H_0(x(t_0),y(t_0))-\xi_\ast|$ is sufficiently small.
Combining this with \eqref{parsol} and \eqref{PARsubs}, we obtain
\begin{gather*}
\Delta(\tau)\equiv |\Psi(\tau)-\pi+\arcsin c|+\tau^{-\frac{1}{2}}|\mathcal E(\tau)-a\tau|=\mathcal O(\tilde \mu), \quad \tilde \mu=\frac{\tilde\alpha_1^2+\tilde\alpha_2^2}{2}\to 0
\end{gather*}
with high probability on an asymptotically long time interval for solutions of system \eqref{PARpert} starting in the vicinity of $\Psi_\ast(\tau)$, $\mathcal E_\ast(\tau)$ (see Fig.~\ref{FigPAR1}).

\begin{figure}
\centering
\subfigure[]{
\includegraphics[width=0.3\linewidth]{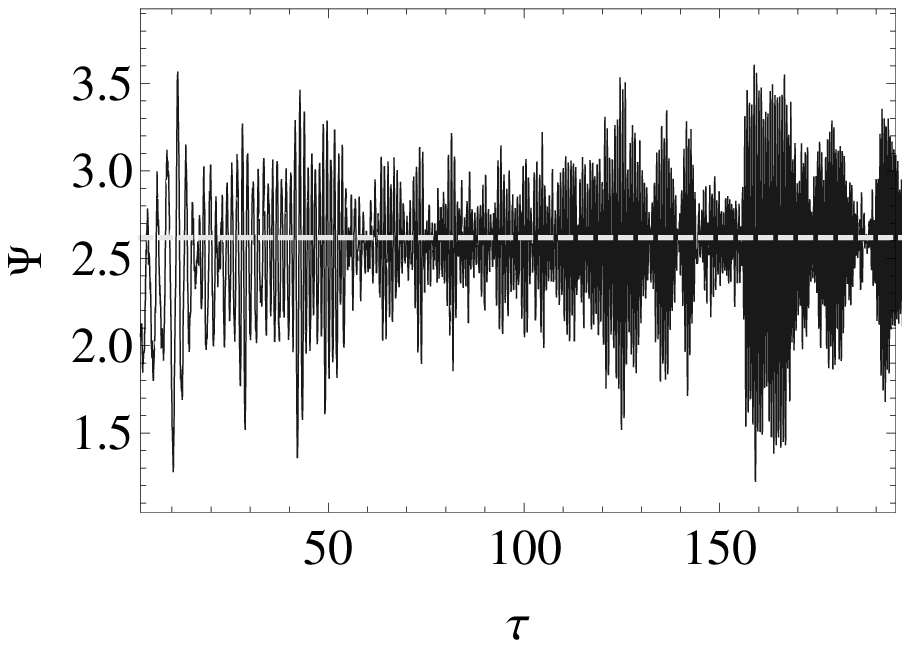}
}
\hspace{1ex}
 \subfigure[]{
\includegraphics[width=0.3\linewidth]{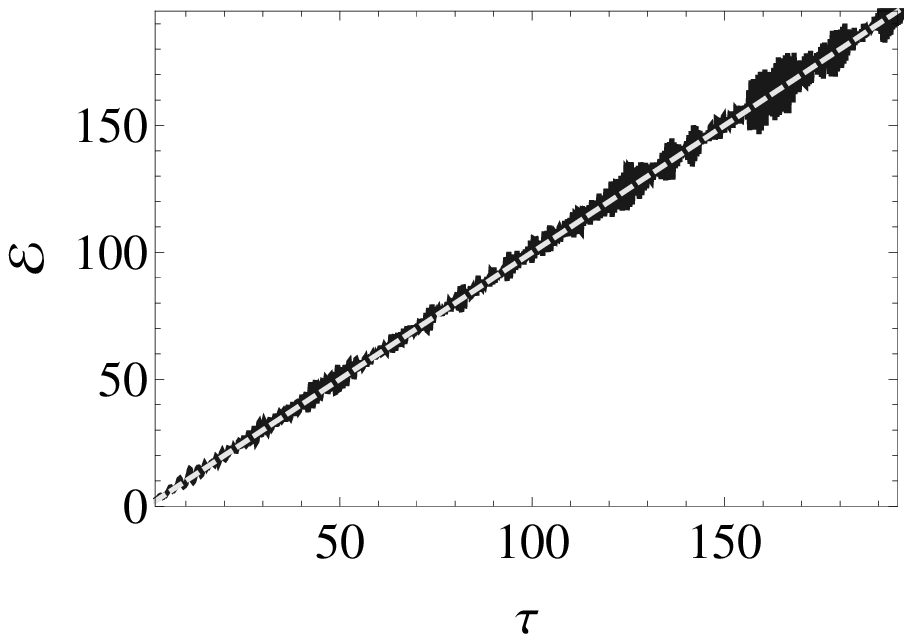}
}
\hspace{1ex}
\subfigure[]{
\includegraphics[width=0.3\linewidth]{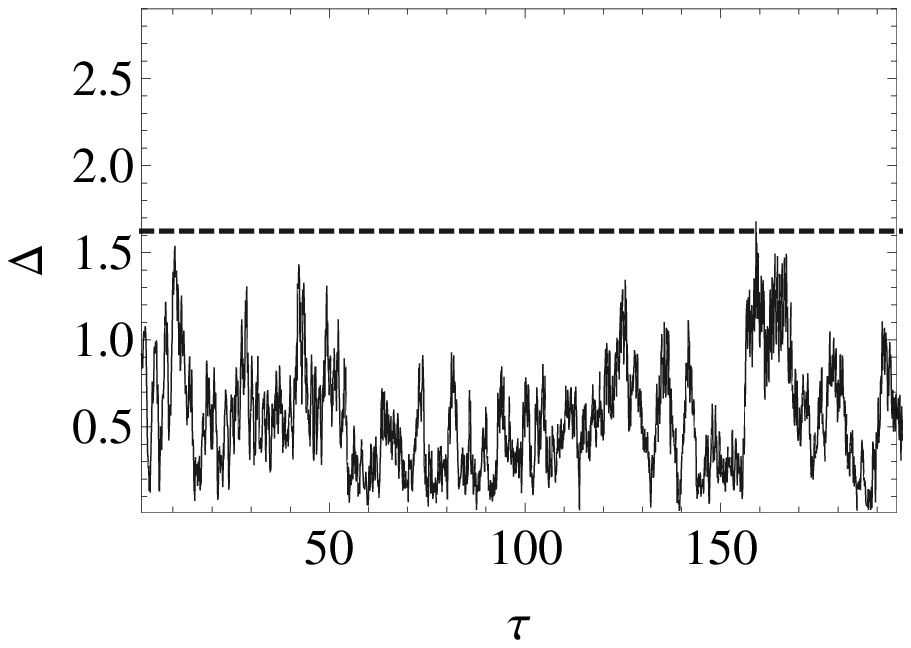}
}
\caption{\small The evolution of $\Psi(\tau)$, $\mathcal E(\tau)$ and $\Delta(\tau)$ for a sample path of the solution to system \eqref{PARpert} with $a=1$, $b=c=0.5$, $\tilde\alpha_1=0.25$, $\tilde\alpha_2=0.15$, $\beta_1=0$, $\beta_2=-1/2$. The dashed lines correspond to (a) $\psi_0=5\pi/6$, (b) $\tau$, (c) $4\sqrt{\xi^0_\ast}\approx 1.62$.} \label{FigPAR1}
\end{figure}

{\bf 2.} Now let $\beta_1=1/4$ and $\beta_2=-1/4$. In this case, system \eqref{PARxy} is of the form \eqref{FulSys} with $p=1$, $q=3$, 
\begin{gather*}
{\bf a}({\bf z},t)=   {\bf a}_0({\bf z})+t^{-\frac{1}{3}}{\bf a}_1({\bf z})+t^{-\frac{2}{3}}{\bf a}_2({\bf z})+\mathcal O(t^{-1}), \quad {\bf A}({\bf z},t)\equiv  t^{-\frac{1}{3}} {\bf A}_1({\bf z}), \quad 
{\bf A}_1({\bf z})\equiv   \begin{pmatrix} \alpha_1 & 0 \\ 0 & \alpha_2\end{pmatrix}, \\
{\bf a}_0({\bf z})\equiv  \begin{pmatrix} y \\ \frac{a}{\gamma^2}\big(\sin(x+\psi_0)-c\big)\end{pmatrix}, \quad 
{\bf a}_1({\bf z})\equiv \left(\frac{2}{3\gamma^2}\right)^{\frac{1}{3}} \begin{pmatrix} b \big(\cos (x+\psi_0)-\cos\psi_0\big) \\ y \big(\sin(x+\psi_0)-c\big) \end{pmatrix}, \\  
{\bf a}_2({\bf z})\equiv  \left(\frac{2}{3\gamma^2}\right)^{\frac{2}{3}} \begin{pmatrix}  0 \\ \mathcal E_0 \big(\sin(x+\psi_0)-c\big)+a\psi_1 \big(\cos(x+\psi_0)-\cos\psi_0\big) \end{pmatrix}.
\end{gather*}
Note that the transformations described in Section~\ref{Sec3} with $N=2$,
\begin{align*}
v_1(E,\varphi)  = &   -\frac{1}{\nu(E)} \left(\frac{2}{3\gamma^2}\right)^{\frac{1}{3}} \left\{\int\limits_0^\varphi \left\{ \Big(Y^2 -  \frac{ab}{\gamma^2} \big[\cos(X+\psi_0)-\cos\psi_0\big]\Big)\big(\sin(X+\psi_0)-c\big)\right\}_\varsigma\,d\varsigma\right\}_\varphi,\\
v_2(E,\varphi) = & -\frac{1}{\nu(E)} \left(\frac{2}{3\gamma^2}\right)^{\frac{2}{3}} \left\{\int\limits_0^\varphi \left\{   \mathcal E_0 Y\big(\sin(X+\psi_0)-c\big)+  a \psi_1 Y\big(\cos(X+\psi_0)-\cos\psi_0\big)\right\}_\varsigma\,d\varsigma\right\}_\varphi\\
& +\frac{1}{ \nu(E)} \left\{\int\limits_0^\varphi \left\{  \frac{a\alpha_1^2}{2\gamma^2} \cos(X+\psi_0)-R_2\right\}_\varsigma\,d\varsigma\right\}_\varphi
\end{align*}
reduce system \eqref{PARxy} to \eqref{Veq} with 
\begin{align*}
\Lambda_1(v) =& \left(\frac{2}{3\gamma^2}\right)^{\frac{1}{3}} 
\left\langle 
\Big(\big(Y(\varphi,v)\big)^2 -  \frac{ab}{\gamma^2} \big[\cos(X(\varphi,v)+\psi_0)-\cos\psi_0\big]\Big)\big(\sin(X(\varphi,v)+\psi_0)-c\big)
\right \rangle_\varphi,\\
\Lambda_2(v) =&    \left(\frac{2}{3\gamma^2}\right)^{\frac{2}{3}} 
\left\langle Y(\varphi,v)\Big (
 \mathcal E_0 \big(\sin(X(\varphi,v)+\psi_0)-c\big)+  a \psi_1 \big(\cos(X(\varphi,v)+\psi_0)-\cos\psi_0\big)
\Big)\right \rangle_\varphi\\
&  +  \left\langle 
 \frac{\alpha_2^2}{2}-\frac{a\alpha_1^2}{2\gamma^2} \cos(X(\varphi,v)+\psi_0)+R_2(v,\varphi)
\right \rangle_\varphi,
\end{align*}
and $R_2(E,\varphi) \equiv  (-\partial_E\Lambda_1+(\nabla_{\bf z} I)^T {\bf a}_1\partial_E +  (\nabla_{\bf z} \Phi)^T {\bf a}_1 \partial_\varphi )v_1(E,\varphi)$.
We see that 
\begin{gather*}
\Lambda_1(v)= v \left(-bc\left(\frac{2}{3\gamma^2}\right)^{\frac{1}{3}} +\mathcal O(v)\right),\quad
\Lambda_2(v)=\frac{\alpha_1^2+\alpha_2^2}{2}+\mathcal O(v), \quad v\to 0.
\end{gather*} 
Hence, the transformed system satisfies \eqref{ass1} and \eqref{ass21} with $n=1$, $\lambda_{n}=-bc (2/(3\gamma^2))^{1/3}$, $\mu_{2p}=(\alpha_1^2+\alpha_2^2)/2$, and $(n,p,q)\in \Sigma_1$. It can easily be checked that $\vartheta_0=1/3$ and $\xi_0=\mu_{2p}/|\lambda_n|$. Applying Theorem~\ref{Th2} shows that $H_0(x(t),y(t))\approx t^{-1/3}\xi_0$ as $t\to\infty$ with high probability for solutions of system \eqref{PARxy} with initial data such that $|t_0^{1/3} H_0(x(t_0),y(t_0))-\xi_0|$ is small enough. Therefore, by taking into account \eqref{parsol} and \eqref{PARsubs}, we obtain
\begin{gather*}
\Delta(\tau)\equiv |\Psi(\tau)-\pi+\arcsin c|+\tau^{-\frac{1}{2}}|\mathcal E(\tau)-a\tau|=\mathcal O(\tau^{-\frac 1 4}), \quad \tau\to\infty
\end{gather*}
with high probability for solutions of the perturbed system \eqref{PARpert} starting close to the particular solution $\Psi_\ast(\tau)$, $\mathcal E_\ast(\tau)$ (see Fig.~\ref{FigPAR2}). 

\begin{figure}
\centering
\subfigure[]{
\includegraphics[width=0.3\linewidth]{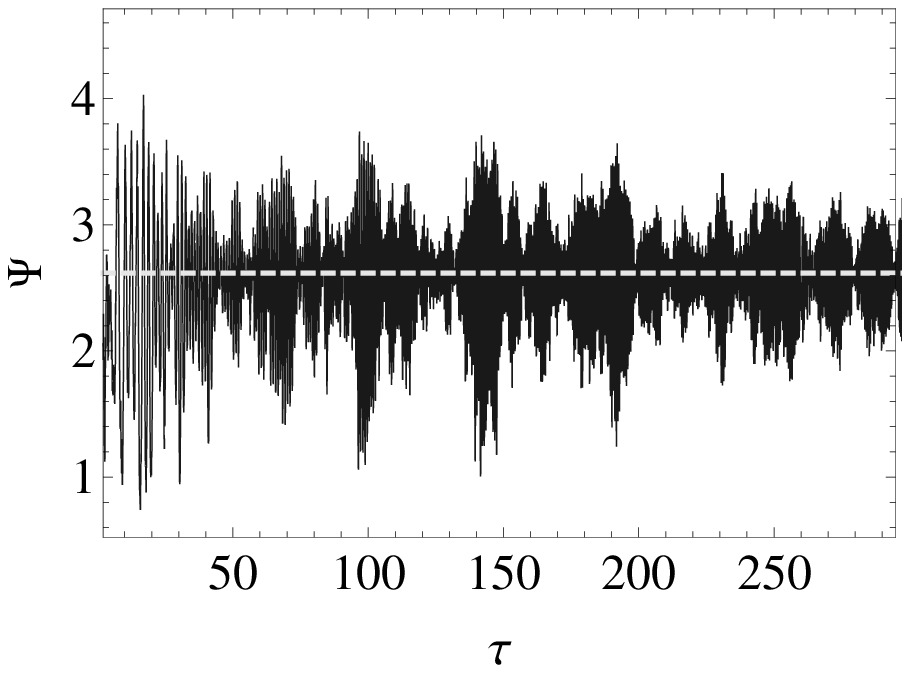}
}
\hspace{1ex}
 \subfigure[]{
\includegraphics[width=0.3\linewidth]{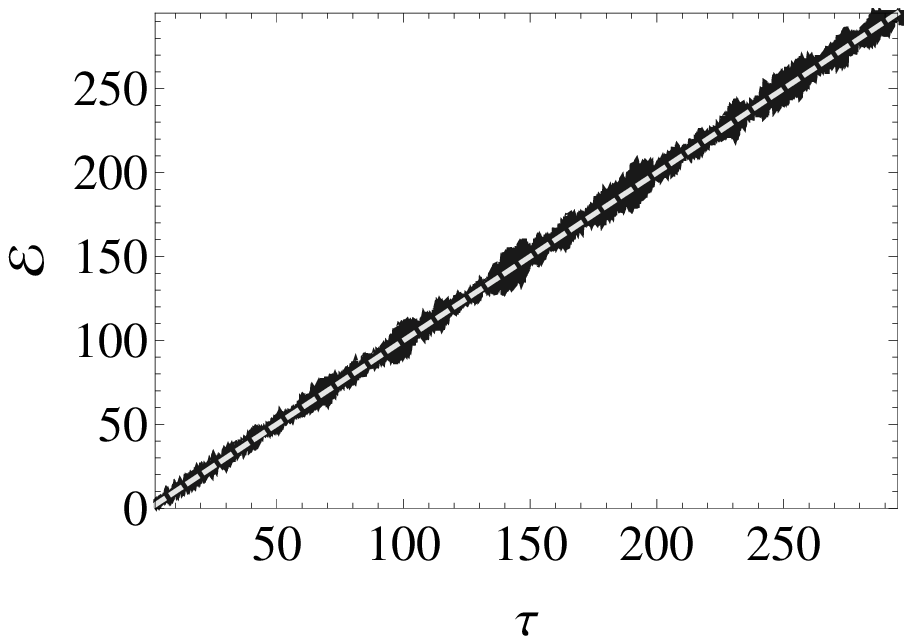}
}
\hspace{1ex}
\subfigure[]{
\includegraphics[width=0.3\linewidth]{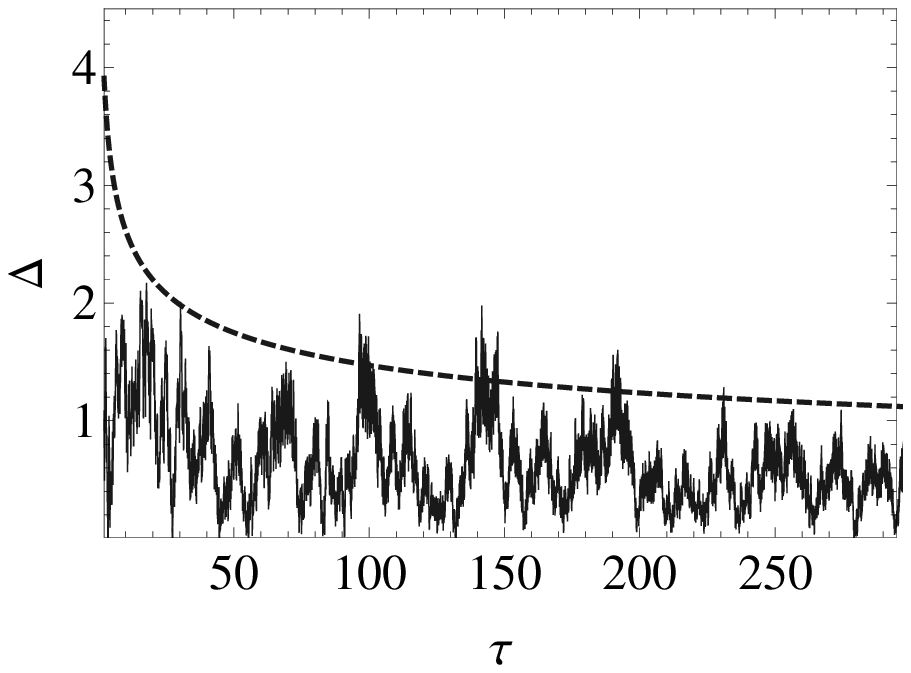}
}
\caption{\small The evolution of $\Psi(\tau)$, $\mathcal E(\tau)$ and $\Delta(\tau)$ for a sample path of the solution to system \eqref{PARpert} with $a=1$, $b=c=0.5$, $\tilde\alpha_1=0.75$, $\tilde\alpha_2=0.5$, $\beta_1=1/4$, $\beta_2=-1/4$. The dashed lines correspond to (a) $\psi_0=5\pi/6$, (b) $\tau$, (c) $4\sqrt{\xi_0} \tau^{-1/4}$, $\xi_0\approx 1.35$.} \label{FigPAR2}
\end{figure}

Thus, stochastic perturbations of the white noise type do not destroy the capture into parametric autoresonance.

\section{Conclusion}

Thus, possible stochastically stable asymptotic regimes in autonomous Hamiltonian systems with damped stochastic perturbations have been described. In particular, depending on the structure and parameters of decaying perturbations, the equilibrium of the limiting system can become asymptotically stable (Theorems~\ref{Th2} and \ref{Th3}) or new attractive states can appear (Theorem~\ref{Th4}). In these cases, bifurcations in the perturbed system \eqref{FulSys} are associated with the appearance of stable nontrivial solutions of the corresponding reduced equations for the normalized energy variable (see Lemmas~\ref{Lem1}, \ref{Lem3} and \ref{Lem4}). We have also described the conditions under which stochastically perturbed systems can behave like the corresponding limiting systems (Theorem~\ref{Th5}).  

The examples contained in sections~\ref{SecEx} and~\ref{Appl} illustrate the application of the results obtained.
Besides, the persistence of capture into parametric autoresonance under stochastic perturbations of white noise type has not been justified previously.

\section*{Acknowledgements}
The research is made in the framework of executing the development program of Volga Region Mathematical Center (agreement no. 075-02-2022-888).


}

\begin{thebibliography}{99}
\bibitem{FW98} M. I. Freidlin, A. D. Wentzell, \textit{Random Perturbations of Dynamical Systems}, Springer-Verlag, New York, Heidelberg, Berlin, 1998.

\bibitem{LDP74} L. D. Pustyl'nikov, \textit{Stable and oscillating motions in nonautonomous dynamical systems. A generalization of C. L. Siegel's theorem to the nonautonomous case}, Math. USSR-Sbornik, 23 (1974), 382--404.

\bibitem{HRT94} H. Thieme, \textit{Asymptotically autonomous differential equations in the plane}, Rocky Mountain J. Math., 24 (1994), 351--380.

\bibitem{OS20arxiv} O. A. Sultanov, \textit{Stability and bifurcation phenomena in asymptotically Hamiltonian systems}, Nonlinearity, 35 (2022), 2513--2534.

\bibitem{LRS02} J. A. Langa, J. C. Robinson, A. Su\'{a}rez, \textit{Stability, instability and bifurcation phenomena in nonautonomous differential equations}, Nonlinearity, 15 (2002), 887--903.

\bibitem{KS05} P. E. Kloeden, S. Siegmund, \textit{Bifurcations and continuous transitions of attractors in autonomous and nonautonomous systems}, Internat. J. Bifur. Chaos., 15 (2005), 743--762.

\bibitem{MR08} M. Rasmussen, \textit{Bifurcations of asymptotically autonomous differential equations}, Set-Valued Anal., 16 (2008), 821--849.

\bibitem{OS21DCDS} O. A. Sultanov, \textit{Bifurcations in asymptotically autonomous Hamiltonian systems under oscillatory perturbations}, Discrete \& Continuous Dynamical Systems, 41 (2021), 5943--5978.

\bibitem{OSIJBC21} O. A. Sultanov, \textit{Damped perturbations of systems with center-saddle bifurcation}, Internat. J. Bifur. Chaos., 31 (2021), 2150137.

\bibitem{RKh64} R. Z. Khas'minskii, \textit{The behavior of a conservative system under the action of slight friction and slight random noise}, J. Appl. Math. Mech., 28 (1964), 1126--1130.

\bibitem{BG02} P. H. Baxendale, L. Goukasian, \textit{Lyapunov exponents for small random perturbations of Hamiltonian systems}, The Annals of Probability, 30 (2002), 101--134.

\bibitem{AMR08} J. A. D. Appleby, X. Mao, A. Rodkina, \textit{Stabilization and destabilization of nonlinear differential equations by noise}, IEEE Trans.  Autom. Control, 53 (2008), 683--691.

\bibitem{BKGT08} V. V. Buldygin et al., \textit{On the $\varphi$-asymptotic behaviour of solutions of stochastic differential equations}, Theory Stoch. Process., 14 (2008), 11--29.

\bibitem{DNR11} R. E. L. DeVille, N. S. Namachchivaya, Z. Rapti, \textit{Stability of a stochastic two-dimensional non-Hamiltonian system}, SIAM J. Appl. Math., 71 (2011), 1458--1475.

\bibitem{BHW12} J. Birrell, D. P. Herzog, J. Wehr, \textit{The transition from ergodic to explosive behavior in a family of stochastic differential equations}, Stochastic Process. Appl., 122 (2012), 1519--1539.

\bibitem{TW15} J. Touboul, G. Wainrib, \textit{Dynamics and absorption properties of stochastic equations with H\"{o}lder diffusion coefficients}, Physica D, 307 (2015), 42--60.

\bibitem{NSN90} N. S. Namachchivaya, \textit{Stochastic bifurcation}, Appl. Math. Comput., 39 (1990), 37s--95s.

\bibitem{CF98} H. Crauel, F. Flandoli, \textit{Additive noise destroys a pitchfork bifurcation}, J. Dynam. Diff. Eqs., 10 (1998) 259--274.

\bibitem{BRS09} I. Bashkirtseva, L. Ryashko, H. Schurz, \textit{Analysis of noise-induced transitions for Hopf system with additive and multiplicative random disturbances}, Chaos, Solitons \& Fractals, 39 (2009), 72--82.

\bibitem{BRR15} I. Bashkirtseva, T. Ryazanova, L. Ryashko, \textit{Stochastic bifurcations caused by multiplicative noise in systems with hard excitement of auto-oscillations}, Phys. Rev. E, 92 (2015), 042908.

\bibitem{CDLR17} M. Callaway et al., \textit{The dichotomy spectrum for random dynamical systems and pitchfork bifurcations with additive noise}, Ann. Inst. H. Poincar\'{e} Probab. Statist., 53 (2017), 1548--1574.

\bibitem{DELR18} T. S. Doan et al., \textit{Hopf bifurcation with additive noise}, Nonlinearity, 31 (2018) 4567.

\bibitem{AGR09} J. A. D. Appleby, J. P. Gleeson, A. Rodkina, \textit{On asymptotic stability and instability with respect to a fading stochastic perturbation}, Appl. Anal., 88 (2009), 579--603.

\bibitem{ACR11} J. A. D. Appleby, J. Cheng, A. Rodkina, \textit{Characterisation of the asymptotic behaviour of scalar linear differential equations with respect to a fading stochastic perturbation}, Discrete Contin. Dyn. Syst., Supplement (2011), 79--90.

\bibitem{KT13} O. I. Klesov,  O. A. Tymoshenko, \textit{Unbounded solutions of stochastic differential equations with time-dependent coefficients}, Annales Univ. Sci. Budapest., Sect. Comp., 41 (2013), 25--35.

\bibitem{OS21arxiv} O. A. Sultanov, \textit{Bifurcations in asymptotically autonomous Hamiltonian systems under multiplicative noise}, arXiv preprint: 2112.00548, 2021.

\bibitem{BOks98} B. {\O}ksendal, \textit{Stochastic differential equations. An introduction with applications}, Springer, New York, Heidelberg, Berlin, 1998.

\bibitem{IKNF06} A. S. Fokas, A. R. Its, A. A. Kapaev, V. Yu. Novokshenov, \textit{ Painlev\'{e} transcendents. The Riemann-Hilbert approach}, Amer. Math. Soc., Providence, 2006.

\bibitem{BG08} A. D. Bruno, I. V. Goryuchkina, \textit{Boutroux asymptotic forms of solutions to Painlev\'{e} equations and power geometry}, Doklady Math., 78 (2008), 681--685.

\bibitem{LK14} L. A. Kalyakin, \textit{Synchronization in a nonisochronous nonautonomous system}, Theor. and Math. Phys., 181 (2014), 1339--1348.

\bibitem{OS21} O. A.~Sultanov, \textit{Autoresonance in oscillating systems with combined excitation and weak dissipation}, Physica D, 417 (2021), 132835.

\bibitem{KF13} V. V. Kozlov, S. D. Furta, \textit{Asymptotic solutions of strongly nonlinear systems of differential equations}, Springer, New York, 2013.

\bibitem{CYZZ18} H. Cui et al., \textit{Convergence to nonlinear diffusion waves for solutions of Euler equations with time-depending damping}, J. Differ. Equ., 264 (2018), 4564--4602.

\bibitem{CH21} D. Cao, L. Hoang, \textit{Asymptotic expansions with exponential, power, and logarithmic functions for non-autonomous nonlinear differential equations}, J. Evol. Equ., 21  (2021), 1179--1225.

\bibitem{WW66} W. Wasow, \textit{Asymptotic expansions for ordinary differential equations}, John Wiley and Sons, Inc., New York, 1966. 

\bibitem{RH12} R.~Khasminskii, \textit{Stochastic stability of differential equations}, Springer, Berlin, Heidelberg, 2012.

\bibitem{BM61} N. N. Bogolubov, Yu. A. Mitropolsky, \textit{Asymptotic methods in theory of non-linear oscillations}, Gordon and Breach, New York, 1961.

\bibitem{AN84} A. I. Neishtadt, \textit{The separation of motions in systems with rapidly rotating phase}, J. Appl. Math. Mech., 48 (1984), 133--139.

\bibitem{AKN06} V. I. Arnold, V. V. Kozlov, A. I. Neishtadt, \textit{Mathematical aspects of classical and celestial mechanics}, Springer, Berlin, 2006.

\bibitem{LFS09} L.~Friedland, \textit{Autoresonance in nonlinear systems}, Scholarpedia, 4 (2009), 5473.

\bibitem{VIV45} V. I.~Veksler, \textit{A new method of acceleration of relativistic particles}, J. Phys. USSR, 9 (1945) 153--158.

\bibitem {M45} E. M.~McMillan, \textit{The synchrotron - a proposed high energy particle accelerator}, Phys. Rev., 68 (1945), 143--144 

\bibitem{FGF00} J.~Fajans, E.~Gilson, L.~Friedland, \textit{Second harmonic autoresonant control of the $l=1$ diocotron mode in pure electron plasmas}, Phys. Rev. E, 62 (2000), 4131.

\bibitem{LFSH16} L.~Friedland, A. G.~Shagalov, \textit{Parametric autoresonant excitation of the nonlinear Schr\"{o}dinger equation}. Phys. Rev. E, 94 (2016), 042216.

\bibitem{GMetal17} G. Manfredi et al.,  \textit{Chirped-frequency excitation of gravitationally bound ultracold neutrons}, Phys. Rev. D, 95 (2017), 025016.

\bibitem{LF19etal} L.~Friedland et al., \textit{Excitation and control of large amplitude standing ion acoustic waves}, Phys. Plasmas 26(2019), 092109.

\bibitem{LFAGS20} L. Friedland, A. G. Shagalov, \textit{Standing autoresonant plasma waves}, J. Plasma Phys., 86 (2020), 825860301.

\bibitem {LKRMS08} L. A.~Kalyakin, \textit{Asymptotic analysis of autoresonance models}, Rus. Math. Surv.,  63 (2008), 791--857. 

\bibitem{OS19} O. Sultanov, \textit{Capture into parametric autoresonance in the presence of noise}, Commun. Nonlinear Sci. Numer. Simul., 75 (2019), 14--21.
\end{thebibliography}
\end{document}